\newtheorem{thm}{Theorem}[section]
\newtheorem{lem}[thm]{Lemma}
\newtheorem{prop}[thm]{Proposition}
\theoremstyle{definition}
\newtheorem{defn}[thm]{Definition}
\theoremstyle{remark}
\newtheorem{rem}[thm]{Remark}
\numberwithin{equation}{section}
\newcommand{\R}{\mathbb R}
\newcommand{\p}{\partial}
\newcommand{\comment}[1]{}
\begin{document}

\title[Boundary Harnack in slit domains]{Boundary Harnack estimates in slit domains and applications to thin free boundary problems}
\author{D. De Silva}
\address{Department of Mathematics, Barnard College, Columbia University, New York, NY 10027}
\email{\tt  desilva@math.columbia.edu}
\author{O. Savin}
\address{Department of Mathematics, Columbia University, New York, NY 10027}\email{\tt  savin@math.columbia.edu}

\thanks{ D.~D. is supported by NSF grant DMS-1301535. 
O.~S.~ is supported by NSF grant DMS-1200701.}

\begin{abstract}
We provide a higher order boundary Harnack inequality for harmonic functions in slit domains. As a corollary we obtain the $C^\infty$ regularity of the free boundary in the Signorini problem near non-degenerate points. 
\end{abstract}
\maketitle

\section{Introduction}

In our recent work \cite{DS4} we proved a higher order boundary Harnack estimate for harmonic functions vanishing on a part of the boundary of a domain $\Omega$ in $\R^n$. 
We recall briefly the main result. 

\begin{thm}\label{BH} Let $v$ and $u>0$ be two harmonic functions in $\Omega \subset \R^n$ that vanish continuously on some portion of the boundary $\Gamma \subset \p \Omega$ and let $k \ge 1$.

If $\Gamma \in C^{k,\alpha}$ then $\frac vu \in C^{k,\alpha}$ up to the boundary in a neighborhood of $\Gamma$.
\end{thm}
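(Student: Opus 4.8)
The statement is local, so I would fix a point $x_0\in\Gamma$, apply a rigid motion so that $x_0=0$ with inner normal $e_n$, and work in a small neighborhood where $\Gamma$ is the graph $x_n=g(x')$ with $g\in C^{k,\alpha}$, $g(0)=0$, $\nabla g(0)=0$. One clarifying remark organizes everything: by the classical Schauder boundary estimate, $u,v\in C^{k,\alpha}$ up to $\Gamma$ (zero Dirichlet data on a $C^{k,\alpha}$ portion), and by the Hopf lemma $u\ge c\operatorname{dist}(\,\cdot\,,\Gamma)$ near $\Gamma$; writing $\rho(x):=x_n-g(x')\in C^{k,\alpha}$ and $u=\rho\,u_1$, $v=\rho\,v_1$ one gets $u_1,v_1\in C^{k-1,\alpha}$ with $u_1\ge c>0$, hence $w:=v/u=v_1/u_1\in C^{k-1,\alpha}$ \emph{for free}. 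Thus the content of the theorem is the gain of exactly one derivative over elliptic boundary regularity, and I would extract it from the divergence-form equation for the quotient: since $u\,\Delta v-v\,\Delta u=0$, one has $\operatorname{div}(u^2\nabla w)=0$ in $\Omega$.

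Next I would flatten $\Gamma$ by the $C^{k,\alpha}$ diffeomorphism $\Phi(x)=(x',\rho(x))$, which maps a neighborhood of $0$ in $\Omega$ to a half-ball $B_r^+$, sends $\Gamma$ to $\{y_n=0\}$, and carries $\rho$ to $y_n$. Under $\Phi$ the equation for $\bar w:=w\circ\Phi^{-1}$ becomes $\operatorname{div}_y\!\big(y_n^2\,\tilde A(y)\,\nabla_y\bar w\big)=0$ in $B_r^+$, where $\tilde A=(u_1^2\,D\Phi\,(D\Phi)^T)\circ\Phi^{-1}$ is uniformly elliptic but only $C^{k-1,\alpha}$ (flattening costs one derivative on the coefficients — but, as in classical Schauder, not on the solution). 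We already know $\bar w\in C^{k-1,\alpha}$, so what remains is a \emph{gain‑of‑one‑derivative Schauder estimate} for the degenerate operator $L_{\tilde A}:=\operatorname{div}(y_n^2\,\tilde A\,\nabla\,\cdot\,)$: coefficients in $C^{k-1,\alpha}$ must force $\bar w\in C^{k,\alpha}$ up to $\{y_n=0\}$. Transferring back through the $C^{k,\alpha}$ map $\Phi$ (composition of $C^{k,\alpha}$ maps is legitimate since $k\ge1$) then gives $w\in C^{k,\alpha}$ up to $\Gamma$, which is the claim.

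The Schauder estimate for $L_{\tilde A}$ is the heart of the matter and the step I expect to be the main obstacle. I would prove it by a Campanato/freezing scheme: at a boundary point freeze the coefficients to obtain the constant‑coefficient model $\operatorname{div}(y_n^2\,\tilde A_0\,\nabla\,\cdot\,)=0$, which after a linear change of variables is $\Delta\phi+\tfrac{2}{y_n}\p_n\phi=0$ — the equation for restrictions to an $n$-plane of axially symmetric harmonic functions in $\R^{n+2}$ — so its bounded solutions are smooth, are even in $y_n$ (equivalently $\p_n\phi=O(y_n)$ at the boundary), and possess an explicit hierarchy of polynomial solutions of every degree; one then shows an arbitrary solution is approximated at each boundary point and each dyadic scale by such a model polynomial of degree $k$, with error controlled by the $C^{k-1,\alpha}$ oscillation of $\tilde A$, and iterates. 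The delicate points specific to the weight are (i) establishing the Neumann‑type boundary behaviour $\p_n\bar w|_{\{y_n=0\}}=0$, which is what lets the equation be solved for $\p_{nn}\bar w$ with an error vanishing to second order in $y_n$, so that normal derivatives bootstrap together with tangential ones; and (ii) verifying that the perturbation argument is genuinely loss‑free, i.e.\ recovers $C^{k,\alpha}$ and not merely the $C^{k-1,\alpha}$ already in hand. A clean alternative for the whole argument, closer in spirit to free boundary methods, is a direct compactness/improvement‑of‑oscillation scheme on the pair $(u,v)$: in the exact flat limit $u$ and $v$ extend harmonically by odd reflection across $\{x_n=0\}$, so $w=(v/x_n)/(u/x_n)$ is real‑analytic near $0$ and in particular agrees with a polynomial of degree $k$ up to error $O(|x|^{k+\alpha})$; one transfers this approximation to nearly‑flat domains by compactness and iterates over dyadic scales and over boundary points, the base case being the classical ($C^{0,\alpha}$) boundary Harnack principle.
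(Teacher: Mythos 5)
Your primary route is genuinely different from the paper's. You pass to the quotient $w=v/u$, use the identity $\operatorname{div}(u^2\nabla w)=0$, flatten $\Gamma$, and reduce to a loss-free boundary Schauder estimate for the degenerate operator $\operatorname{div}(y_n^2\tilde A\nabla\,\cdot)$ with $\tilde A\in C^{k-1,\alpha}$. The paper (following \cite{DS4}; see Lemma~\ref{Imp} for the slit-domain analogue) never writes a single scalar equation for $w$ and never flattens: it keeps the pair $(u,v)$ and runs an improvement-of-flatness iteration, seeking at each dyadic scale a polynomial $P$ of degree $k$ for which $v-uP$ is small in $L^\infty$; the linear system for the coefficients of $P$ comes from the exact identity $\Delta(uP)=u\,\Delta P+2\nabla u\cdot\nabla P$, and that exactness is precisely where the extra derivative is gained (replacing $u$ by the distance $d$ would produce an error term $P\,\Delta d\in C^{k-2,\alpha}$ and re-introduce the loss); the iteration is closed by a compactness argument whose limit model is the flat half-space, handled by odd reflection. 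This is, in substance, the ``clean alternative'' you sketch in your final sentence. What each route buys: yours gives a tidy conceptual reduction and situates the theorem inside the regularity theory of degenerate equations with weight $y_n^2$, but the loss-free weighted Schauder estimate it hinges on ($C^{k-1,\alpha}$ coefficients forcing $C^{k,\alpha}$ solutions up to $\{y_n=0\}$, even in $y_n$) is itself a theorem of comparable depth, which you correctly flag as the main obstacle but leave unproved; you also need some care in the freezing step since for $k=1$ the equation cannot be rewritten in non-divergence form, so the Campanato comparison must be done at the divergence level. The paper's route is self-contained, avoids any appeal to general degenerate theory, and --- decisively for the present paper --- carries over verbatim to slit domains (Theorem~\ref{SchauderO}), where flattening is unavailable and the replacement $U$ for $u$ is comparable only to $U_0=\frac{1}{\sqrt2}\sqrt{d+r}$ rather than to a power of a smooth distance. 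Both approaches rest on the same structural miracle: harmonicity of $u$ makes $u^2$ (equivalently $uP$) an exact object rather than an approximate one.
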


By classical Schauder estimates both functions $u$ and $v$ are of class $C^{k,\alpha}$ up to the boundary. In general, the quotient of two $C^{k,\alpha}$ functions that vanish on the boundary is only of class $C^{k-1,\alpha}$ in a neighborhood of the boundary. The theorem states that the quotient of two harmonic functions is in fact one derivative better than what is expected from boundary Schauder estimates.

Theorem \ref{BH} is well known as the {\it boundary Harnack theorem} in the case $k=0$ (see \cite{HW}, \cite{CFMS}, \cite{Fe}). An easy application of Theorem \ref{BH} gives $C^\infty$ regularity for $C^{1,\alpha}$ free boundaries in the classical obstacle problem, see \cite{DS4}. 

In this paper we obtain the corresponding theorems in the case of slit domains and the thin obstacle problem.
A {\it slit domain} is a domain in $\R^{n+1}$ from which we remove an $n$-dimensional set $\mathcal P \subset \{x_{n+1}=0\}$ (slit), with $C^{k,\alpha}$ boundary in $\R^n$, $\Gamma:=\p_{\R^n} \mathcal P$, $k \ge 1$.

In \cite{DS3} we investigated the higher regularity of the free boundary for the {\it thin one-phase} free boundary problem (see \cite{CRS}, \cite{DR}, \cite{DS1}, \cite{DS2}). In particular we developed a Schauder type estimate in slit domains, see Theorem \ref{Schauder} in the next section for the precise statement. The Schauder estimate states that if $u$ is even in $x_{n+1}$, and it is harmonic in the slit domain $B_1 \setminus \mathcal P$, and $u$ vanishes continuously on the slit $\mathcal P$ then
$$\Gamma \in C^{k,\alpha} \quad \Longrightarrow \quad \frac{u}{U_0} \in C^{k-1,\alpha}(x_1,...,x_n,r), \quad \quad \quad \quad k \ge 2.$$
Here $$U_0:=\frac{1}{\sqrt 2} \sqrt{d+r}$$ where $d$ represents the signed distance to $\Gamma$ in $\R^n$ and $r$ denotes the distance to $\Gamma$ in $\R^{n+1}$.  
The above statement says that the quotient $u/U_0$ is essentially a $C^{k-1,\alpha}$ function in the variables $(x_1,..,x_n,r)$. 

The explicit function $U_0$ is an approximation of a harmonic function which vanishes on $\mathcal P$ and plays the same role as the distance function in smooth (non-slit) domains. 
For example when $\Gamma=\{(x_n,x_{n+1})=(0,0)\}$ is a ``straight" boundary then $U_0$ is indeed harmonic.

In this paper we obtain the boundary Harnack estimate in slit domains, see Theorem \ref{SchauderO} in the next section. We show that if $u$ and $U>0$ are even, harmonic functions vanishing on $\mathcal P$ then
$$\Gamma \in C^{k,\alpha} \quad \Longrightarrow \quad \frac{u}{U} \in C^{k,\alpha}(x_1,...,x_n,r), \quad \quad \quad \quad k \ge 1.$$
 We also provide the Schauder estimates in slit domains with $C^{1,\alpha}$ boundary, which were not completed in \cite{DS2}.

The proofs of the Schauder estimates and the boundary Harnack estimates are essentially the same. We approximate $u$ in a sequence of concentric balls $B_{\rho^l}$ by functions $U_0 P$ or $U\, P$ with $P(x,r)$ a polynomial in $x$ and $r$. The gain of one extra derivative comes from the fact that while $U$ is harmonic, the explicit function $U_0$ only approximates a harmonic function up to an error, thus $U \, P$ provides a better approximation than $U_0 P$.
 
 \
 
 {\bf Signorini problem.}
 As an application of the boundary Harnack estimates in slit domains we obtain $C^\infty$ regularity of the free boundary near regular points in the Signorini problem, also known as the thin obstacle problem.  It consists in minimizing
 \begin{equation}\label{SN}
  \min_{u \in \mathcal A} \, \, \, \int_{B_1} |\nabla u|^2 dX ,  
  \end{equation}
 with $B_1 \subset \R^{n+1}$, $X=(x,x_{n+1}) \in \R^{n+1}, $ and $\mathcal A$ the convex set
 $$\mathcal A:=\left\{u \in H^1(B_1), \quad u=\varphi \quad \mbox{on $\p B_1$}, \quad u(x,0) \ge 0  \right \}.$$ 
 
 There is considerable literature on the regularity properties of the solution
(see \cite{F}, \cite{C}, \cite{U}). In particular, the minimizer $u$ is Lipschitz in $B_1$, and is harmonic in the slit domain $B_1 \setminus \mathcal P$ with $\mathcal P:=\{ u=0\} \cap \{x_{n+1}=0\}$. Athanasopoulos and Caffarelli obtained in \cite{AC} the optimal regularity of the solution on the free boundary $\Gamma:=\p_{\R^n} \mathcal P  \cap B_1$ i.e. the solution $u$ is pointwise $C^{1, \frac 1 2}$ at all points on $\Gamma$.
 A point $X_0 \in \Gamma$ is called a {\it singular point} of the free boundary if 
 $$u(X)=o(|X-X_0|^\frac 32). $$
 Otherwise, $X_0$ is called a {\it regular point} of the free boundary.

Concerning the regularity of the free boundary $\Gamma$, Athanasopoulos, Caffarelli and Salsa showed in \cite{ACS} that if $X_0$ is a regular point, then $\Gamma$ is given locally by the graph of a $C^{1,\alpha}$ function $g$ in some direction, say the $e_n$ direction (see also \cite{GP} concerning the singular set). Moreover, the derivatives $u_i$, $i=1,2,..,n$, are harmonic in the slit domain $B_1 \setminus \mathcal P$ and vanish continuously on $\mathcal P$, and $u_n>0$ in a neighborhood of $X_0$. 
 We remark that we may assume that $u$ is even in the $x_{n+1}$ variable since the even part of $u$ solves the obstacle problem with the same free boundary.

Now we can apply the boundary Harnack estimate, Theorem \ref{SchauderO}, and improve the $C^{1,\alpha}$ regularity of $\Gamma$ to $\Gamma \in C^\infty$. Indeed, since $\Gamma \in C^{1,\alpha}$, we obtain that $\frac{u_i}{u_n}$ is $C^{1,\alpha}$ when restricted to $\Gamma$ in a neighborhood of $X_0$. On the other hand $\frac{u_i}{u_n}$ restricted to $\Gamma$ represents the derivative $g_i$. In conclusion $g \in C^{2,\alpha}$, hence $\Gamma \in C^{2,\alpha}$. We iterate this indefinitely and obtain $\Gamma \in C^\infty$.
 
 \begin{thm}
 Let $X_0$ be a regular point of the free boundary $\Gamma$ of a solution $u$ to the Signorini problem \eqref{SN}. Then $\Gamma \in C^\infty$ in a neighborhood of $X_0$.
 \end{thm}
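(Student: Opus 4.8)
The plan is to combine the structure of the solution at a regular point, established by Athanasopoulos, Caffarelli and Salsa in \cite{ACS}, with the boundary Harnack estimate in slit domains, Theorem \ref{SchauderO}, and then to bootstrap the regularity of $\Gamma$. We first record the setup. Since the even part of $u$ in the variable $x_{n+1}$ solves the Signorini problem with the same free boundary, we may assume $u$ is even in $x_{n+1}$. By \cite{ACS}, after a rotation fixing $X_0$ there is a ball $B=B_\rho(X_0)$ in which $\mathcal P=\{u=0\}\cap\{x_{n+1}=0\}$ is the subgraph $\{x_n\le g(x'),\ x_{n+1}=0\}$ with $x'=(x_1,\dots,x_{n-1})$, $g\in C^{1,\alpha}$ and $\Gamma=\{x_n=g(x')\}$; moreover the partial derivatives $u_1,\dots,u_n$ are harmonic in $B\setminus\mathcal P$, vanish continuously on $\mathcal P$, and $u_n>0$ in $B\setminus\mathcal P$. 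Each $u_i$ with $i\le n$ is again even in $x_{n+1}$, so the pair $(u_i,u_n)$ satisfies the hypotheses of Theorem \ref{SchauderO}.

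The next step is to identify, for $i=1,\dots,n-1$, the boundary trace of $u_i/u_n$ on $\Gamma$ with $-g_i$. By the classification of blow-ups at a regular point $p\in\Gamma$, the rescalings of $u$ at $p$ converge to the $3/2$-homogeneous model solution, which on the thin space vanishes on the half corresponding to the slit and grows like $d^{3/2}$ on the complementary half, $d$ being the signed distance to $\Gamma$ in $\R^n$. Hence, approaching $\Gamma$ through $\{x_{n+1}=0,\ u>0\}$, the gradient $\nabla_x u$ becomes asymptotically parallel to the inner unit normal $\nu$ of $\Gamma$ in $\R^n$; since $\nu=(-\nabla_{x'}g,1)/\sqrt{1+|\nabla_{x'}g|^2}$, this gives
$$\lim\frac{u_i}{u_n}=\frac{\nu_i}{\nu_n}=-g_i\qquad\mbox{ on }\Gamma.$$
On the other hand, Theorem \ref{SchauderO} applied to $(u_i,u_n)$ with $\Gamma\in C^{1,\alpha}$ says that $u_i/u_n$ is a $C^{1,\alpha}$ function of $(x_1,\dots,x_n,r)$ up to $\Gamma$, where $r$ is the distance to $\Gamma$ in $\R^{n+1}$. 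Restricting this function to $\Gamma=\{r=0\}$, i.e. composing it with the $C^{1,\alpha}$ parametrization $x'\mapsto(x',g(x'),0)$ of $\Gamma$, we obtain $g_i\in C^{1,\alpha}$ for each $i$, hence $g\in C^{2,\alpha}$ and $\Gamma\in C^{2,\alpha}$ near $X_0$.

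The bootstrap then runs as indicated in the introduction. Assume $\Gamma\in C^{k,\alpha}$ near $X_0$ for some $k\ge1$; the remaining hypotheses of Theorem \ref{SchauderO} are untouched, so $u_i/u_n\in C^{k,\alpha}(x_1,\dots,x_n,r)$ up to $\Gamma$, and restricting to $\Gamma$ as above yields $g_i\in C^{k,\alpha}$, that is $g\in C^{k+1,\alpha}$ and $\Gamma\in C^{k+1,\alpha}$. At each stage only finiteness of the relevant norms is needed, with no uniformity in $k$, so the iteration may be performed on one fixed small ball about $X_0$, and we conclude $\Gamma\in C^{k,\alpha}$ for every $k$, i.e. $\Gamma\in C^\infty$ near $X_0$.

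The only step requiring genuine care is the identification of the trace: the $C^{1,\alpha}$ extension of $u_i/u_n$ up to $\Gamma$ is supplied by Theorem \ref{SchauderO}, but pinning down the value of that trace uses the non-degenerate, $3/2$-homogeneous behavior of $u$ at a regular point, which forces $\nabla_x u$ on the thin space to be asymptotically normal to $\Gamma$. Everything else is the mechanical boundary Harnack bootstrap.
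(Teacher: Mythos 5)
Your proof is correct and follows essentially the same route as the paper's: reduce to the even part, invoke the ACS structure at a regular point, apply Theorem \ref{SchauderO} to the pair $(u_i, u_n)$, identify the boundary trace of $u_i/u_n$ with $\pm g_i$, and bootstrap. The only addition is that you spell out the trace identification via the $3/2$-homogeneous blow-up, a step the paper asserts without elaboration; apart from an inessential sign convention (the paper writes $g_i$, you compute $-g_i$, which has no bearing on the regularity conclusion), the arguments coincide.
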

  
 We remark that analyticity of the free boundary $\Gamma$ near regular points was obtained by Koch, Petrosyan and Shi in \cite{KPS} at the same time this paper was completed. They used a different method based on partial Legendre transformation.

The paper is organized as follows. In Section 2 we introduce some notation and state our main result, the boundary Harnack estimate Theorem \ref{SchauderO}. In Section 3 we prove this theorem in the case  $\Gamma \in C^{k,\alpha}$ with $k \ge 2$. In Section 4 we obtain both the Schauder and the boundary Harnack estimates when $\Gamma \in C^{1,\alpha}$.  Finally in Section 5 we collect the proofs of some technical lemmas used in our proofs. 

\section{Notation and Statement of main results}

\subsection{Notation}

Let $\Gamma$ be a $C^{k+1,\alpha}$ surface in $\R^n$, $k \geq 0.$ Assume for simplicity that $\Gamma$ is given by the graph of a function $g$ of $n-1$ variables
\begin{equation}\label{G}
\Gamma:=\{(x', g(x'))\}, \quad \quad \quad g:B_1' \subset \R^{n-1}\to \R,
\end{equation}
satisfying
\begin{equation*}\label{eqg}
g(0)=0, \quad \nabla_{x'} g(0) =0, \quad \|g\|_{C^{k+1,\alpha}(B_1')} \leq 1.
\end{equation*}

Let $\mathcal P$ denote the $n$ dimensional slit in $\R^{n+1}$ given by
$$\mathcal P := \{X=(x,x_{n+1}) \in B_1 \ | \ x_{n+1}=0, \quad x_n \leq g(x')\}.$$
Notice that in the $n$ dimensional ball $B_1' \times\{0\}$ we have $\p_{\R^n} \mathcal P =\Gamma$.

Given a point $X=(x,x_{n+1})$ we denote by $d$ the signed distance in $\R^n$ from $x$ to $\Gamma$ with $d>0$ above $\Gamma$ (in the $e_n$ direction). Let $$r:=\sqrt{x_{n+1}^2+d^2}$$  be the distance in $\R^{n+1}$ from $X$ to $\Gamma.$
We have
\begin{equation}\label{dr}
\nabla_x r= \frac d r \, \nu, \quad \quad \nu=\nabla_x d,
\end{equation}
where $\nu(x)$ represents the unit normal in $\R^n$ to the parallel surface to $\Gamma$ passing through $x$.

We recall the definition of the class $C^{k,\alpha}_{xr}$ introduced in \cite{DS3}.
We denote by
$$P(x,r) = \, a_{\mu m}\,  x^\mu r^m, \quad \deg P=k,$$
a polynomial of degree $k$ in the $(x,r)$ variables, and we use throughout the paper the summation convention over repeated indices.
Also
$$x^{\mu}=x_1^{\mu_1}\ldots x_{n}^{\mu_n}, \quad |\mu| = \mu_1+ \ldots +\mu_n, \quad \quad \mu_i \ge 0.$$
Sometimes we think that $a_{\mu m}$ are defined for all indices $(\mu,m)$, by extending them to be $0$.
We also denote $$\|P\|:= \max|a_{\mu m}|.$$

\begin{defn}We say that a function $f: B_1 \subset \R^{n+1} \rightarrow \R$ is { \it pointwise $C^{k,\alpha}$ in the $(x,r)$-variables at $0 \in \Gamma$} and write $f \in C_{xr}^{k,\alpha}(0)$ if there exists a (tangent) polynomial $P_0(x,r)$ of degree $k$
 such that $$f(X) = P_0(x, r) + O(|X|^{k+\alpha}).$$
\end{defn}

We define $\|f\|_{C^{k,\alpha}_{xr}(0)}$ as the smallest constant $M$ such that $$\|P_0\| \leq M, \quad \mbox{and} \quad |f(X) - P_0(x,r)| \leq M|X|^{k+\alpha},$$ for all $X$ in the domain of definition.

Similarly, we may write the definition for $f$ to be pointwise $C^{k,\alpha}_{xr}$ at some other point $Z \in \Gamma$. 
\

\begin{defn}Let $K \subset \Gamma.$ We say that $f \in C_{xr}^{k,\alpha} (K)$ if there exists a constant $M$ such that $f \in C_{xr}^{k,\alpha}(Z)$ for all $Z \in K$ and $\|f\|_{C_{xr}^{k,\alpha}(Z)} \leq M$ for all $Z \in K.$

The smallest $M$ in the definition above is denoted by $\|f\|_{C_{x,r}^{k,\alpha}(K)}.$
\end{defn}

\smallskip

We remark that if $f \in  C_{xr}^{k,\alpha} (\Gamma)$ and $\Gamma \in C^{k,\alpha}$ then the restriction of $f$ on $\Gamma$ is a $C^{k,\alpha}$ function.

Finally, let $\theta \in (-\pi, \pi]$ be the angle between the segment of length $r$ from $X$ to $\Gamma$ and  the $x$-hyperplane and define 
\begin{equation}\label{U0}U_0(X):= r^{1/2}\cos \frac{\theta}{2}= \frac{1}{\sqrt 2} \sqrt{d+r}.\end{equation}

\subsection{Main results.} Let $u \in C(B_1)$ be even in the $x_{n+1}$ coordinate, with $\|u\|_{L^\infty} \leq 1$, and
\begin{equation}\label{FB}
\begin{cases}
\Delta u = \dfrac{U_0}{r} f \quad \quad \text{in $B_1 \setminus \mathcal P$}\\
u=0 \quad \text{on $\mathcal P$.}
\end{cases}
\end{equation}
Also, let $U$ be a positive harmonic function, even in $x_{n+1}$, which is normalized such that $U( \frac 12 e_n)=1$. Precisely we assume that  $U> 0$ solves the problem above with $f=0$, i.e.
\begin{equation}\label{FBU}
\begin{cases}
\Delta U = 0 \quad \quad \text{in $B_1 \setminus \mathcal P$}\\
U=0 \quad \text{on $\mathcal P$.}
\end{cases}
\quad \quad \quad \mbox{and} \quad U(\frac 1 2 e_n)=1.
\end{equation}

Our main result reads as follows.

\begin{thm}[Boundary Harnack in slit domains]\label{SchauderO}
Let $\Gamma \in C^{k+1,\alpha}$ satisfy \eqref{G} and $u, U$ satisfy \eqref{FB}, \eqref{FBU} with $k\geq 0$ and $$f \in C^{k,\alpha}_{xr} (\Gamma \cap B_1), \quad \quad \|f\|_{C^{k,\alpha}_{xr} (\Gamma \cap B_1)} \leq 1.$$ Then,
\begin{equation}\label{SuO}\left \|\dfrac{u}{U} \right \|_{C^{k+1,\alpha}_{xr} (\Gamma \cap B_{1/2})} \leq C \end{equation} 
with $C$ depending only on $n$, $k$ and $\alpha$.
\end{thm}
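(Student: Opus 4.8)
\smallskip
\noindent\textbf{Strategy.}
The plan is to run a compactness‑based iteration on dyadic balls, approximating $u$ at scale $\rho^l$ by $U\,P_l$ with $P_l(x,r)$ a polynomial of degree $k+1$, in the spirit of the Schauder estimate of \cite{DS3}; the difference — and the source of the extra derivative — is that here the reference function $U$ is genuinely harmonic, whereas in \cite{DS3} the comparison function $U_0$ is only an approximate solution. First I would fix $Z\in\Gamma\cap B_{1/2}$, translate it to the origin, and reduce \eqref{SuO} to the pointwise statement: there is a polynomial $Q$ in $(x,r)$ of degree $k+1$ with $\|Q\|\le C$ and $|u/U-Q(x,r)|\le C|X|^{k+1+\alpha}$ on a small ball, with $C$ independent of $Z$. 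Since $g(0)=0$ and $\nabla g(0)=0$, at scale $\rho^l$ the surface $\Gamma$ is $C^{k+1,\alpha}$‑close to the hyperplane $\{x_n=0\}$, so blow‑ups of the configuration converge to the model slit domain $\R^{n+1}\setminus\{x_{n+1}=0,\,x_n\le 0\}$, on which $U_0=r^{1/2}\cos\frac\theta2$ is exactly harmonic (and, by the order zero boundary Harnack principle in slit domains, the rescaled $U$'s converge to a positive multiple of $U_0$).

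The core is an improvement‑of‑approximation step at scales $B_{\rho^l}$, with $\rho=\rho(n,k,\alpha)$ small: construct polynomials $P_l$ of degree $k+1$ with
$$\left\|\frac{u-U\,P_l}{U_0}\right\|_{L^\infty(B_{\rho^l}\setminus\mathcal P)}\le C\,\rho^{\,l(k+1+\alpha)},$$
the degree‑$j$ coefficients of $P_{l+1}-P_l$ being $O(\rho^{\,l(k+1+\alpha-j)})$. The low‑order coefficients of $P_l$ must be used to cancel the polynomial part of the right‑hand side $(U_0/r)f$: for every monomial $x^\mu r^m$ with $|\mu|+m\le k$ one finds a polynomial $q_{\mu m}$ of degree $|\mu|+m+1\le k+1$ with $\Delta(U_0\,q_{\mu m})=(U_0/r)\,x^\mu r^m$ modulo lower order terms, so the hypothesis $f\in C^{k,\alpha}_{xr}$ is precisely what makes all these particular‑solution corrections fit inside a degree‑$(k+1)$ polynomial. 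Summing the coefficient bounds (convergent since $k+1+\alpha-j>0$ for $j\le k+1$) gives $P_l\to Q$ with $\|Q\|\le C$; dividing the displayed estimate by $U$ and using $U\simeq U_0$ near $\Gamma$ then yields the pointwise expansion of $u/U$ at $0$, and running this uniformly over $Z$ gives \eqref{SuO}.

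I would prove the improvement step by compactness and contradiction. Rescaling $B_{\rho^l}$ to $B_1$, normalizing $(u-U\,P_l)/U_0$, and assuming the estimate fails along a sequence of scales, one extracts — using uniform H\"older estimates up to the slit for harmonic functions in slit domains (cf.\ \cite{ACS}, \cite{DS3}) — a limit $w$ in the model slit domain, even in $x_{n+1}$, vanishing on the slit, and solving $\Delta w=(U_0/r)\eta$ with $\eta\in C^{k,\alpha}_{xr}(0)$ having vanishing polynomial part. The contradiction comes from the \emph{model regularity estimate}: since $U_0$ is exactly harmonic on the model, expanding into homogeneous solutions and separating variables in the $(x_n,x_{n+1})$‑plane shows that the harmonic functions even in $x_{n+1}$ and vanishing on the slit are spanned by the modes $r^{(2j+1)/2}\cos\frac{(2j+1)\theta}2$ tensored with harmonic polynomials in the remaining variables, each such mode being $U_0$ times a polynomial in $(x,r)$ (for instance $r^{3/2}\cos\frac{3\theta}2=U_0(2d-r)$). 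Hence $w/U_0$ is, up to the particular solution of $(U_0/r)\eta$ — which is $O(|X|^{k+1+\alpha})$ near $0$ because $\eta$ has no polynomial part — real‑analytic in the $(x,r)$ variables, so $w/U_0\in C^{k+1,\alpha}_{xr}(0)$ with a \emph{universal} bound, contradicting the assumed blow‑up. This is exactly where one gains a derivative over \cite{DS3}: there the finite‑scale comparison function $U_0$ is harmonic only up to an error of the order of the curvature of $\Gamma$, while here $U$ is genuinely harmonic, making $U\,P_l$ a strictly better approximation of $u$.

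The step I expect to be the main obstacle is this model regularity estimate together with the compactness feeding into it: one must establish the slit‑mode decomposition and the ensuing analyticity of $w/U_0$ in $(x,r)$ with a bound depending only on $\|w\|_{L^\infty(B_1)}$ and $\|\eta\|_{C^{k,\alpha}_{xr}(0)}$; obtain uniform H\"older estimates up to the slit that are stable under rescaling; and deal with the fact that the right‑hand side $(U_0/r)f$ scales \emph{critically} rather than subcritically, so that its polynomial part must be incorporated into $P_l$ and its remainder contributes precisely at the target order $k+1+\alpha$. Finally, the case $k=0$ — where $\Gamma$ is only $C^{1,\alpha}$ and fewer derivatives are available for the blow‑up — would be handled separately by the same scheme with minor modifications (this is carried out in Section 4, and along the way also yields the $C^{1,\alpha}$ Schauder estimate in slit domains).
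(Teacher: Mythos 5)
Your proposal follows the same Campanato-type iteration the paper uses, with the same crucial insight (the extra derivative comes from $U$ being genuinely harmonic, so that $U\,P$ beats $U_0\,P$), but the execution of the improvement step is genuinely different. The paper's improvement lemma (Lemma~\ref{Imp}) is constructive: it decomposes the rescaled error $\tilde u=\tilde u_0+\tilde v$, bounds $\tilde v$ by a barrier (Lemma~\ref{bar}), approximates the harmonic part $\tilde u_0$ by a model solution in $B_{1/2}$ via soft compactness and the straight-slit expansion Theorem~\ref{SchauderL} (quoted from \cite{DS3}), and then explicitly corrects the approximating polynomial by solving the linear system \eqref{A}, whose structure comes from the expansion $U=U_0\,(P_0+O(|X|^{k+\alpha}))$ of Theorem~\ref{Schauder} together with Lemma~\ref{pr}. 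You instead run a compactness-and-contradiction argument, extracting a limit in the model slit domain and invoking a model regularity estimate that you propose to re-derive from the mode decomposition $r^{(2j+1)/2}\cos\frac{(2j+1)\theta}{2}$ (your identity $r^{3/2}\cos\frac{3\theta}{2}=U_0(2d-r)$ is correct). Both routes work; the paper's is more quantitative and avoids a contradiction argument, at the cost of the bookkeeping in \eqref{A}, while yours is shorter at the top level but pushes more weight onto the compactness machinery (uniform H\"older estimates up to the slit stable under rescaling) and onto reproving Theorem~\ref{SchauderL}. One thing to be careful about in your write-up: the particular-solution polynomials you denote $q_{\mu m}$ must be set up with $U$, not $U_0$ — $\Delta(U_0\,q_{\mu m})$ produces the extra curvature error $x^\mu r^m\Delta U_0$ noted in the Remark after Definition~\ref{d1}, which is exactly what would force you back to $\Gamma\in C^{k+2,\alpha}$; in the model case $U=U_0$ so the slip is harmless there, but at finite scales it is the crux of the gain. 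Also note the paper treats $k=0$ separately (Section 4) with regularizations $\bar r,\bar U_0$ because $r,d,U_0$ lack the needed smoothness, precisely the ``minor modifications'' you anticipate.
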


The boundary Harnack Theorem \ref{SchauderO} complements the Schauder type estimates obtained in \cite{DS3} that we state below.

\begin{thm}[Schauder estimates in slit domains]\label{Schauder}
Let $\Gamma \in C^{k+1,\alpha}$ with $k \ge 1$ satisfy \eqref{G}, and assume $u$ solves \eqref{FB} with $$f \in C^{k-1,\alpha}_{xr} (\Gamma \cap B_1), \quad \quad \|f\|_{C^{k-1,\alpha}_{xr} (\Gamma \cap B_1)} \leq 1.$$ Then,
\begin{equation}\label{Su}\left \|\dfrac{u}{U_0} \right \|_{C^{k,\alpha}_{xr} (\Gamma \cap B_{1/2})} \leq C \end{equation} and
\begin{equation}\label{SDu}\left \|\dfrac{\nabla_x u}{(U_0/r)} \right \|_{C^{k,\alpha}_{xr} ( \Gamma \cap B_{1/2})} \leq C \end{equation}
with $C$ a constant depending only on $n$, $k$ and $\alpha$.
\end{thm}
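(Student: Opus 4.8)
The plan is to run an iterative improvement-of-approximation scheme at geometric scales --- the same scheme underlying Theorem \ref{SchauderO}, but carried out with the explicit weight $U_0$ in place of the harmonic weight $U$. Since $\|\cdot\|_{C^{k,\alpha}_{xr}(\Gamma\cap B_{1/2})}$ is defined through pointwise conditions, it suffices to establish the pointwise expansion at $0\in\Gamma$ with a constant depending only on $n,k,\alpha$, and then to invoke it at every point of $\Gamma\cap B_{1/2}$ after a translation and rotation that keep $\|g\|_{C^{k+1,\alpha}}$ under control. Concretely I must produce a polynomial $P_0(x,r)$ of degree $k$ with $\|P_0\|\le C$ and
$$\left|\frac{u}{U_0}(X)-P_0(x,r)\right|\le C\,|X|^{k+\alpha},$$
together with the analogous expansion for $\nabla_x u/(U_0/r)$.

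I would first record the base estimates. A barrier argument gives $|u|\le C\,U_0$ near $\Gamma$, and interior Schauder for $\Delta u=(U_0/r)f$ away from $\mathcal P$, together with boundary Schauder near the smooth part of $\mathcal P$ (where $u$ has a Dirichlet condition and is even in $x_{n+1}$), controls $u$ away from $\Gamma$; this yields a crude first approximation of $u$ by a bounded multiple of $U_0$. The heart of the matter is a one-step improvement: there are $\rho\in(0,1)$ and $C=C(n,k,\alpha)$ such that, if $u$ is approximated in $B_{\rho^l}$ by $U_0P_l$ (with $\deg P_l\le k$) and the rescaled excess is normalized to unit size, then one can find $P_{l+1}$ with $\|P_{l+1}-P_l\|\le C$ whose rescaled excess at scale $\rho^{l+1}$ is at most $\rho^{l(k+\alpha)}$. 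Iterating, the $P_l$ converge geometrically to the desired $P_0$, and summing the increments bounds $\|P_0\|$.

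The one-step improvement I would prove by compactness and contradiction. If it fails along a sequence $u_j,\Gamma_j,f_j$, rescale, subtract the optimal approximants $U_0^{(j)}P_j$, and normalize the excess $w_j$ to unit size. The base estimates provide compactness: $\Gamma_j$ flattens to the plane $\{x_n=x_{n+1}=0\}$, $\mathcal P_j$ to $\{x_n\le 0,\ x_{n+1}=0\}$, the weights $U_0^{(j)}$ to $U_0$ (now genuinely harmonic, since the slit is flat), and $w_j\to w_\infty$ locally uniformly, with $w_\infty$ even in $x_{n+1}$, vanishing on the flat slit, and harmonic off it. Harmonicity of the limit dictates the choice of $P_j$: one picks $P_j$ so that $\Delta(U_0^{(j)}P_j)$ matches the Taylor data of $(U_0^{(j)}/r)f_j$ to order $k-1+\alpha$ in the $(x,r)$ sense, making $\Delta w_j=(U_0^{(j)}/r)f_j-\Delta(U_0^{(j)}P_j)$ negligible after blow-up. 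That such $P_j$ exist is a linear-algebra lemma: writing $\Delta(U_0P)=2\nabla U_0\cdot\nabla P+U_0\Delta P+(\Delta U_0)P$ --- where all three terms carry the scale $U_0/r$ and $(\Delta U_0)P$ is of strictly lower order --- one checks that the induced linear map on the coefficients of $P$ is triangular with invertible diagonal, while the right-hand side $(U_0^{(j)}/r)f_j$ has exactly the matching form because $f\in C^{k-1,\alpha}_{xr}$. (The gradient estimate \eqref{SDu} is obtained by differentiating the equation and running the parallel scheme, or deduced from \eqref{Su} via the expansion of $\nabla_x U_0$.)

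It remains to contradict the normalization, and this is the key Liouville theorem for the flat slit domain: a function $w_\infty$ harmonic in $\R^{n+1}\setminus\{x_n\le 0,\ x_{n+1}=0\}$, even in $x_{n+1}$, vanishing on the slit, and of growth $o(|X|^{k+3/2})$ --- the exponent inherited from the normalization, since $U_0\sim r^{1/2}$ and $\deg P_l\le k$ --- must equal $U_0\cdot Q$ for a polynomial $Q$ in $(x,r)$ of degree $\le k$. One proves this by a Fourier decomposition in the angle $\theta$ of \eqref{U0}: the even modes vanishing on the slit are $\cos((m+\tfrac12)\theta)$, each contributing a homogeneous harmonic term of the form $\mathrm{Re}(z^{m+1/2})\,h(x')$ with $h$ a harmonic polynomial on $\R^{n-1}$; the growth bound discards all terms of degree exceeding $k+\tfrac12$; and since $\mathrm{Re}(z^{m+1/2})=U_0\cdot(\text{homogeneous degree-}m\text{ polynomial in }(x_n,r))$, every surviving term is $U_0$ times a polynomial of degree $\le k$ in $(x,r)$. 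Thus $w_\infty$ already belongs to $U_0\cdot\{\text{polynomials of degree}\le k\}$ --- precisely the class modulo which $w_j$ was normalized to have unit excess --- so, once one checks that the excess passes to the limit, $w_\infty\equiv 0$, contradicting $\|w_\infty\|=1$. The main obstacles are therefore (a) this Liouville classification, including the identity $\mathrm{Re}(z^{m+1/2})=U_0\cdot(\text{poly in }(x,r))$ and its higher-dimensional version, and (b) the solvability of the polynomial-matching linear system --- the point where being in a slit domain rather than a smooth one is genuinely felt; both would be isolated as the technical lemmas of the final section, the weighted-Hölder compactness being routine. The threshold case $\Gamma\in C^{1,\alpha}$ (i.e. $k=0$) lies outside this bootstrap and requires the separate argument of Section 4.
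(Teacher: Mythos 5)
Your outline reproduces, in substance, the strategy the paper attributes to \cite{DS3} for Theorem \ref{Schauder} and itself carries out in Section~3 for the boundary Harnack analogue: an improvement-of-flatness iteration at geometric scales with approximants of the form $U_0\,P(x,r)$, a barrier estimate absorbing the inhomogeneous part (the paper's Lemma~\ref{bar}), a polynomial expansion for even harmonic functions in the flat slit domain (the paper's Theorem~\ref{SchauderL}), and the linear-algebra analysis of ``approximating polynomials'' generated by the expansion of $\Delta(U_0P)$ (the analogue of the system \eqref{A}--\eqref{fc}). The product rule $\Delta(U_0P)=2\nabla U_0\cdot\nabla P+U_0\Delta P+(\Delta U_0)P$, the observation that all three terms scale like $U_0/r$, the triangular structure of the induced map on coefficients, and the two-dimensional Fourier identity $\mathrm{Re}(z^{m+1/2})=U_0\cdot(\text{polynomial in }(x_n,r))$ are all the right ingredients.

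The one step that does not close as written is the contradiction argument, which you try to settle with a \emph{global} Liouville theorem. In a one-step improvement-of-flatness at a fixed scale $\rho$, the normalized limit $w_\infty$ lives only in $B_1$ (or a fixed rescaled ball) with $\|w_\infty\|_{L^\infty(B_1)}=1$; it carries no polynomial growth bound on all of $\R^{n+1}$, so the statement ``$w_\infty$ harmonic on $\R^{n+1}\setminus\{x_n\le 0,\ x_{n+1}=0\}$ of growth $o(|X|^{k+3/2})$ implies $w_\infty=U_0Q$'' does not apply to it, and the conclusion $w_\infty\equiv 0$ does not follow. What one actually uses is the \emph{local} expansion of Theorem~\ref{SchauderL}: an even, bounded harmonic function in $B_1$ minus the flat slit, vanishing on the slit, satisfies $|w_\infty-U_0Q|\le K|X|^{k+1}U_0$ in $B_{1/2}$ for some degree-$k$ polynomial $Q$ with $U_0Q$ harmonic. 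Choosing $\rho$ so that $K\rho^{1-\alpha}\le\tfrac12$ and then perturbing $Q$ by your linear-algebra lemma to an admissible approximant yields the improved bound at scale $\rho$, which is what contradicts the claimed failure of improvement; $w_\infty$ is in general a nonzero harmonic function, not identically zero. Your Fourier decomposition in $\theta$ is precisely the mechanism for proving that local expansion, so this is a gap of precision rather than of a missing idea, but as stated the contradiction step would not go through. For the record, the paper's own Lemma~\ref{Imp} proceeds directly rather than by contradiction: decompose $\tilde u=\tilde u_0+\tilde v$, bound $\tilde v$ by the barrier, approximate $\tilde u_0$ by the flat-slit case, apply Theorem~\ref{SchauderL}, and perturb the resulting polynomial to be approximating; both routes hinge on the same three ingredients you isolated.
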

 
 In Section 4 we also prove the Schauder estimates when $\Gamma \in C^{1,\alpha}$.
 
 \begin{thm}\label{S_00} 
Let $\Gamma \in C^{1,\alpha}$ satisfy \eqref{G} with $k = 0$, and assume $u$ solves \eqref{FB} with $$|f| \le r^{\alpha -3/2}.$$  
Then,
$$ \left \|\frac{u}{U_0} \right \|_{C^\alpha_{xr}(\Gamma \cap B_{1/2})} \leq C,$$
with $C$ a constant depending only on $n$ and $\alpha$.
\end{thm}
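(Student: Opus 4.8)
The plan is to run a Campanato-type iteration on dyadic balls centered at $0 \in \Gamma$, showing that $u$ is well approximated in $B_{\rho^l}$ by a suitable multiple of $U_0$. Concretely, I would prove by induction on $l$ that there exist constants $c_l$ with $|c_{l+1} - c_l| \le C \rho^{l\alpha}$ such that
\begin{equation*}
|u - c_l U_0| \le M \rho^{l(1/2 + \alpha)} \quad \text{in } B_{\rho^l},
\end{equation*}
for suitable universal $\rho$ small and $M$ large; since $U_0 \sim r^{1/2}$, passing to the limit $c_l \to a_{00}$ gives $|u - a_{00} U_0| = O(|X|^{1/2 + \alpha})$, which together with the bound on $U_0$ itself (the homogeneity $U_0(X) = r^{1/2}\cos(\theta/2)$ and $\Gamma \in C^{1,\alpha}$ imply $U_0$ is itself $C^{1/2}_{xr}$-controlled) yields $u/U_0 \in C^\alpha_{xr}(0)$ with the desired bound. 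The same argument centered at every $Z \in \Gamma \cap B_{1/2}$, with uniform constants, gives membership in $C^\alpha_{xr}(\Gamma \cap B_{1/2})$.

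The induction step is the heart of the matter. Rescaling $B_{\rho^l} \to B_1$ and subtracting $c_l U_0$, one is reduced to the following compactness/contradiction statement: if $w$ is even, $\|w\|_{L^\infty(B_1)} \le 1$, $w = 0$ on $\mathcal P$, $\Delta w = (U_0/r) f$ with $|f| \le \epsilon_0 r^{\alpha - 3/2}$ and the slit $\Gamma$ is $\epsilon_0$-flat in $C^{1,\alpha}$, then there is a constant $c$ with $|c| \le C$ and
\begin{equation*}
|w - c\, U_0| \le \rho^{1/2 + \alpha} \quad \text{in } B_\rho.
\end{equation*}
To prove this I would argue by compactness: if it fails, take a sequence $w_j$, $f_j$, $\Gamma_j$ violating it with $\epsilon_0 = 1/j$; the right-hand side $(U_0/r) f_j$ is integrable (note $U_0/r \sim r^{-1/2}$, so $(U_0/r)|f_j| \lesssim r^{\alpha - 2}$, borderline but still giving enough compactness in the relevant weighted norms) and tends to $0$, the slits flatten to $\{x_n \le 0, x_{n+1} = 0\}$, and a limit $w_\infty$ is even, bounded, harmonic in $B_1 \setminus \{x_n \le 0, x_{n+1}=0\}$, vanishing on that flat slit. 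The classification of such homogeneous-growth harmonic functions (the building block is exactly $U_0$, which is harmonic in the flat case) forces $w_\infty = c_\infty U_0 + (\text{higher order})$ near $0$, contradicting the failure of the estimate for large $j$. I would borrow the compactness machinery — uniform $C^{1/2}$ bounds up to the slit, stability of the class under flattening of $\Gamma$, and the Liouville-type classification — from the arguments already developed in \cite{DS3} for the higher-$k$ Schauder estimate; here one only needs the base case $k=0$.

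The main obstacle is the borderline integrability of the right-hand side: the Laplacian of $u$ behaves like $r^{\alpha - 2}$ near $\Gamma$, which is not in $L^{p}$ for $p$ large and sits right at the threshold for the basic energy and oscillation estimates. One must check that $|f| \le r^{\alpha - 3/2}$ is precisely the sharp assumption under which $U_0 P$-type approximations still close (the weight $U_0/r$ is designed so that applying the operator $\Delta$ to $U_0 \cdot (\text{degree-}k\text{ polynomial})$ produces an error of exactly this order), and that the difference $w - c\,U_0$ gains the full $\rho^{1/2+\alpha}$ rather than losing a logarithm. I expect this to be handled by testing against $U_0$ in a Rellich/Pohozaev-type identity and by the explicit mapping properties of $\Delta$ restricted to the span of $U_0 r^m x^\mu$, exactly as in the polynomial-approximation scheme outlined in the introduction; the even symmetry in $x_{n+1}$ is used throughout to discard the odd harmonic modes.
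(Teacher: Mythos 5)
Your overall scheme---improvement of flatness with $aU_0$ as the model, proved by compactness against the flat slit and iterated dyadically to produce the tangent expansion $u\approx a_{00}U_0$---is the same strategy the paper uses (Lemma~\ref{imp_0}). However, there is a concrete gap in the reduction step. After you ``rescale $B_{\rho^l}\to B_1$ and subtract $c_l U_0$,'' you claim the resulting $w$ satisfies $\Delta w = (U_0/r)f$ with $|f|\le\epsilon_0 r^{\alpha-3/2}$. Since $\Delta w = \Delta u - c_l\,\Delta U_0$ (suitably rescaled), this requires a pointwise control on $\Delta U_0$ that you do not have: for a $C^{1,\alpha}$ slit the signed distance $d$ is only $C^{1,\alpha}$, so $U_0=\frac{1}{\sqrt 2}\sqrt{d+r}$ is not $C^2$ and $\Delta U_0$ is merely a distribution, with no bound of the required type. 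Even when $\Gamma\in C^2$ one computes $\Delta U_0=-\frac{\kappa U_0}{2r}$, which is of size $U_0/r$, far larger than the $O(\epsilon_0 r^{\alpha-3/2})$ smallness your compactness statement assumes. So subtracting $c_l U_0$ destroys the structure you need, and the induction does not close as written.

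The paper resolves exactly this with a regularization lemma (Lemma~\ref{reg_1}): by mollifying the distance function at dyadic scales and patching, it constructs smooth surrogates $\bar r,\bar U_0$ satisfying $|\bar U_0/U_0-1|\le C\delta r^\alpha$ and, crucially, $|\Delta\bar U_0|\le C\delta r^{\alpha-3/2}$. The improvement of flatness then subtracts $a\bar U_0$ rather than $aU_0$, and converts back to $U_0$ only at the end. Relatedly, the ``borderline integrability'' issue you identify as the main obstacle is handled not by a Rellich/Pohozaev identity but by the explicit barrier Lemma~\ref{bar}, which uses $\bar U_0-\bar U_0^{1+2\alpha}$ as a supersolution to give the clean bound $|\tilde u_1|\le C\delta\tilde U_0$ for the degenerate-forcing piece in the decomposition $\tilde u=\tilde u_1+\tilde u_2$; only the harmonic remainder $\tilde u_2$ goes through the compactness/flat-classification argument (Theorem~\ref{SchauderL}). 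To repair your proposal you would need both the regularization $\bar U_0$ and such a barrier; neither is supplied by the machinery from \cite{DS3}, which is developed under $\Gamma\in C^{k+1,\alpha}$ with $k\ge 1$ and does not cover the $C^{1,\alpha}$ case at issue here.
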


Throughout the paper we denote by $c$, $C$ various positive constants that depend only on $n$, $k$ and $\alpha$ and we refer to them as universal constants.

\section{The case $\Gamma \in C^{k+1,\alpha}$ with $k\geq 1$.}\label{s6}

In this Section we prove Theorem \ref{SchauderO} in the case $k\geq 1.$ We follow the same strategy as in the proof of the Schauder estimates from \cite{DS3}. 
The difference is that now we approximate $u$ by functions $U(X) P(x,r)$ instead of $U_0(X)P(x,r)$ as in \cite{DS3}.

It suffices to prove the following slightly stronger pointwise estimate for $\dfrac u U$.

\begin{prop}\label{PSchauder} Let $\Gamma \in C^{k+1, \alpha}$, satisfy \eqref{G} with $k\geq 1,$ and let $U$ be as in \eqref{FBU}. 
Assume that $u\in C(B_1)$ is even and vanishes on $\mathcal P$, $\|u\|_{L^\infty} \le 1$, and
\begin{equation*}
\Delta u (X)= \frac{U_0}{r} \, R(x,r) +F(X) \quad \quad \quad \quad \mbox{in $B_1 \setminus \mathcal P$},
\end{equation*}
with
$$|F(X)| \le r^{-\frac 12}|X|^{k+\alpha} \quad \quad \mbox{and $R(x,r)$ a polynomial of degree $k$ with $\|R\| \le 1$.}$$
 There exists a polynomial $P(x,r)$ of degree $k+1$ with coefficients bounded by $C$ such that
 $$\left|\frac{u}{U}-P\right| \le C |X|^{k+1+\alpha,}$$  with $C$ depending on $k$, $\alpha$, $n$.
\end{prop}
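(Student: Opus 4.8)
The plan is to prove Proposition \ref{PSchauder} by an iteration-of-approximation argument on the dyadic balls $B_{\rho^l}$, following the scheme of \cite{DS3} but replacing the explicit profile $U_0$ by the harmonic profile $U$. The key point, as the introduction emphasizes, is that $U$ is \emph{exactly} harmonic on the slit domain, so that the product $U\,P$ with $P$ a polynomial in $(x,r)$ has a Laplacian that we can compute and control purely through the error in the polynomial relation $\Delta(U P) = U \Delta P + 2\nabla U\cdot\nabla P$; there is no additional term from $\Delta U$, which is what buys the extra derivative compared to the Schauder statement. I would first establish the single-scale ``improvement'' lemma: if $|u/U - P| \le |X|^{k+1+\alpha}$ on $B_\rho$ for some polynomial $P$ of degree $k+1$ with controlled coefficients, and $u$ satisfies the equation in the statement, then on $B_{\rho/2}$ (or rather after rescaling, back on $B_\rho$) one can correct $P$ by a polynomial whose coefficients are as small as the scale allows, so that the same estimate holds with an improved constant. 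Summing the geometric series of corrections over scales produces the limiting polynomial $P$ and the bound $|u/U - P| \le C|X|^{k+1+\alpha}$.

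Concretely, the engine is a compactness/contradiction argument at unit scale. Suppose the improvement fails; take a sequence $u_j$, $U_j$, $\Gamma_j$, $F_j$, $R_j$ with $\|u_j/U_j - P_j\|$ not improvable, normalize, and pass to the limit. The limiting $\Gamma$ is flat (a half-hyperplane, since we are at a free-boundary point after blow-up), so $U \to U_0$ restricted to the straight slit becomes the explicit harmonic profile, and $w := u_j/U_j$ (suitably renormalized) converges to a function $w_\infty$ which is ``caloric-like'': one checks that $w_\infty$ satisfies a degenerate equation in the $(x,r)$ variables — essentially a Laplace-type equation with the weight coming from $U_0$ — with a right-hand side that is $O(|X|^{k+\alpha})$ and hence, by a Liouville-type classification for such homogeneous solutions, $w_\infty$ agrees with a polynomial of degree $k+1$ in $(x,r)$ up to the correct order. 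This contradicts non-improvability. The technical backbone here is the regularity theory for the operator $\mathrm{div}(U_0^2 \nabla \cdot)$ (or the equivalent statement that $U_0^2$-harmonic functions that grow slower than $|X|^{k+1+\alpha}$ are polynomials of degree $\le k+1$ in $(x,r)$), together with the fact that $U/U_0 \to 1$ in $C^{k,\alpha}_{xr}$ as $\Gamma$ flattens — this last point is itself an instance of the boundary Harnack statement being proved, so it must be arranged inductively in $k$, using the $C^{k-1,\alpha}_{xr}$ control on $U/U_0$ coming from Theorem \ref{Schauder} applied with $f=0$.

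The main obstacle I expect is precisely this coupling: to run the iteration for $U\,P$ one needs good expansions of $U$ itself in the $(x,r)$ variables, i.e. $U = U_0(1 + \text{polynomial in }(x,r) + \text{error})$ to order $k$, and these come from the already-established Schauder estimate for $U/U_0$; keeping track of how the errors in this expansion interact with the errors in approximating $u$, and verifying that the combined error is still only $r^{-1/2}|X|^{k+\alpha}$-type (so that the inductive hypothesis of Proposition \ref{PSchauder} applies to the ``straightened'' problem), is the delicate bookkeeping. A secondary difficulty is the change of variables that flattens $\Gamma$: it distorts both the equation and the profile $U_0$, and one must check that the class $C^{k,\alpha}_{xr}$ and the structure of the right-hand side $\frac{U_0}{r}R + F$ are preserved (up to acceptable errors in $R$ and $F$) under a $C^{k+1,\alpha}$ diffeomorphism fixing $\{x_{n+1}=0\}$. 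Once these two points are handled, the remaining steps — the Liouville classification, the geometric summation of corrections, and the final assembly of $P$ with coefficient bounds — are routine and parallel to \cite{DS3}.
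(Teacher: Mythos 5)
Your overall strategy matches the paper's: dyadic iteration of an improvement-of-flatness lemma, exploiting the exact harmonicity of $U$ so that $\Delta(UP)$ has no $\Delta U$ term, with the expansion $U=U_0(1+Q_0+\text{error})$ supplied by the Schauder estimate and compactness to reduce to the straight-slit case where Theorem \ref{SchauderL} classifies solutions. However, two of the difficulties you flag are not actually present in the paper's argument, because the paper takes a cleaner route than the one you sketch. First, there is no change of variables flattening $\Gamma$: the paper performs a dilation that drives $\|g\|_{C^{k+1,\alpha}}$ below a small $\delta$, so that $\tilde\Gamma$ is $C^{k+1,\alpha}$-close to a hyperplane and compactness of harmonic functions in slit domains applies directly. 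This avoids entirely the question you raise about whether $C^{k,\alpha}_{xr}$ and the structure $\frac{U_0}{r}R+F$ are preserved under a diffeomorphism. Second, there is no induction on $k$ and no circularity to arrange around: the expansion $U=U_0(P_0+O(|X|^{k+\alpha}))$ with $\deg P_0=k$ (not $k-1$, as you write) is the already-established Schauder estimate, Theorem \ref{Schauder}, applied to $U$ with $f=0$, which is an independent result from \cite{DS3} and not an instance of the boundary Harnack statement being proved. Your remark that ``we are at a free-boundary point after blow-up'' is also off the mark --- the limiting $\Gamma$ is flat simply because the dilation sends the $C^{k+1,\alpha}$ norm of $g$ to zero, not because of any free-boundary structure.

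Beyond these framing points, two pieces of the actual proof are absent from your outline. The improvement lemma is proved directly, not by contradiction: after rescaling, $\tilde u$ is split as $\tilde u_0 + \tilde v$, with $\tilde v$ absorbing the right-hand side via a barrier estimate $\|\tilde v\|\le C\delta\tilde U_0$, and $\tilde u_0$ harmonic and approximated by a flat-slit solution by compactness. More importantly, the precise notion of ``approximating polynomial'' --- determined by the linear system for the coefficients $A_{\sigma l}$ coming from the computation of $\Delta(UP)$ --- and the final step of perturbing the polynomial $Q$ produced in the flat case into a polynomial $\bar Q$ whose rescaling is again approximating for $R\equiv 0$, is the technical crux that makes the iteration close, and your proposal does not engage with it. Without that bookkeeping the induction across scales would not be well-posed, since the polynomial picked up at each scale does not satisfy the algebraic constraints forced by the equation.
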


Now Theorem \ref{SchauderO} follows at the origin (and therefore at all points in $\Gamma \cap B_{1/2}$) since $$f(X)=R(x,r)+ h(X), \quad \quad \quad \deg R=k, \quad \quad  h(X)= O(|X|^{k+\alpha}),$$ and $F:=\dfrac{U_0}{r} h(X)$ satisfies the bound above.

Before we proceed with the proof of Proposition \ref{PSchauder} we first need to express $U$ in terms of $U_0$. This is given by the Schauder estimates Theorem \ref{Schauder} applied to $U$  a solution of \eqref{FBU}. Thus $U$ satisfies the following expansion at $0 \in \Gamma$
\begin{equation}\label{expU}U(X)= U_0(X) \;(P_0(x,r) + O(|X|^{k+\alpha})),\end{equation} for some polynomial $P_0(x,r)$ of degree $k$. The derivatives $U_i$ are in fact obtained by differentiating formally this expansion in the $x_i$ direction (see Subsection 5.2 in \cite{DS3}). 
It is easy to check that
\begin{equation}\label{dr1}
\nabla_x U_0 = \frac{U_0}{2r} \,  \nu.
\end{equation}
Thus, using \eqref{dr}-\eqref{dr1} we have
\begin{equation}\label{form}
\nabla_x U = \frac{U_0}{r} \left [ \frac 12 P_0 \, \nu + r  \, \p_x P_0 + (D_r P_0) \, d \, \nu + O(|X|^{k+\alpha}) \right],
\end{equation}
where $D_r P_0$ represents the formal derivative of $P_0$ with respect to the $r$ variable. Since $\nu, d \in C_x^{k,\alpha}$ we obtain
$$U_i= \dfrac{U_0}{r}(P_0^i(x,r) + O(|X|^{k+\alpha})), \quad \quad  \deg P_0^i=k,$$
for some polynomial $P_0^i$.

In the next lemma we state that this expansion holds also for the radial derivative $\p_r U:=\nabla r \cdot \nabla U$. Its proof can be found in the appendix.

\begin{lem}\label{pr}Let $\Gamma$ and $U$ be as above.
Then 
$$|\p_rU - \frac{U_0}{r} P_0^r| \leq C \frac{U_0}{r}|X|^{k +\alpha},$$ with $\deg P_0^r=k$ and $\frac{U_0}{r}P_0^r$ is obtained by formally differentiating $U_0P_0$ in the $r$-direction i.e., 
\begin{equation}\label{radial} 
\p_r U= \frac{U_0}{r} \left [\frac 1 2 P_0 + \nabla_x P_0 \cdot (d \,  \nu) + r \,  (D_r P_0) + O(|X|^{k+\alpha}) \right].
\end{equation}
\end{lem}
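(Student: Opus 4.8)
The plan is to deduce Lemma \ref{pr} from the already-established expansion for the tangential gradient $\nabla_x U$ given in \eqref{form}, together with the analogous expansion for the vertical derivative $U_{n+1}$, by writing the radial derivative $\p_r U = \nabla r \cdot \nabla U$ as a linear combination of these. Recall from \eqref{dr} that $\nabla_x r = \frac dr \nu$, and of course $\p_{x_{n+1}} r = \frac{x_{n+1}}{r}$. Hence
\begin{equation*}
\p_r U = \frac dr\, \nu \cdot \nabla_x U + \frac{x_{n+1}}{r}\, U_{n+1}.
\end{equation*}
The first term we can read off directly from \eqref{form}: dotting with $\nu$ and using $|\nu|=1$, $\nu \cdot \p_x P_0 = \nabla_x P_0\cdot\nu$, we get $\nu\cdot\nabla_x U = \frac{U_0}{r}[\frac12 P_0 + r\,\nabla_x P_0\cdot\nu + d\, D_rP_0 + O(|X|^{k+\alpha})]$, so that $\frac dr\,\nu\cdot\nabla_x U = \frac{U_0}{r}[\frac12 \frac dr P_0 + d\,\nabla_x P_0\cdot\nu + \frac{d^2}{r} D_rP_0 + O(|X|^{k+\alpha})]$ (here one absorbs the factor $d/r$, which is bounded by $1$ and $C^{k,\alpha}_x$ away from $\Gamma$, into the error in the usual way — this is one place care is needed since $d/r$ is only bounded, not smooth, near $\Gamma$, but it is multiplied by $U_0/r$ and the claimed polynomial coefficients are exactly those appearing in the formal $r$-differentiation of $U_0 P_0$).

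Next I need the corresponding expansion for $U_{n+1}$. Since $U$ is even in $x_{n+1}$, $U_{n+1}$ is odd and vanishes on $\{x_{n+1}=0\}$, and one checks (as in Subsection 5.2 of \cite{DS3}, by the same differentiation-of-the-expansion argument that produced the $U_i$) that $U_{n+1} = \frac{U_0}{r}\cdot\frac{x_{n+1}}{r}(P_0^{n+1}(x,r) + O(|X|^{k+\alpha}))$ with the polynomial $P_0^{n+1}$ obtained by formally differentiating $U_0 P_0$ in $x_{n+1}$; using $\p_{x_{n+1}} U_0 = \frac{U_0}{2r}\cdot\frac{x_{n+1}}{r}\cdot\frac{d}{?}$— more precisely, from $U_0 = \frac{1}{\sqrt2}\sqrt{d+r}$ and $\p_{x_{n+1}} r = x_{n+1}/r$, $\p_{x_{n+1}} d = 0$, one gets $\p_{x_{n+1}} U_0 = \frac{U_0}{2r}\cdot\frac{x_{n+1}}{r}$, hence $U_{n+1} = \frac{U_0}{r}\,\frac{x_{n+1}}{r}[\frac12 P_0 + r\,D_r P_0 \cdot\frac{?}{}\ldots]$. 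Carrying this through, $\frac{x_{n+1}}{r} U_{n+1} = \frac{U_0}{r}\,\frac{x_{n+1}^2}{r^2}[\frac12 P_0 + r D_r P_0 + O(|X|^{k+\alpha})]$.

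Finally I add the two contributions. Using the identity $\frac{d^2}{r^2} + \frac{x_{n+1}^2}{r^2} = 1$, the $P_0$ terms combine to give $\frac12 P_0$, the $D_rP_0$ terms combine (with coefficients $\frac{d^2}{r}$ and $r\cdot\frac{x_{n+1}^2}{r^2} = \frac{x_{n+1}^2}{r}$, summing to $r$) to give $r\,D_r P_0$, and the remaining term $d\,\nabla_x P_0\cdot\nu$ is exactly $\nabla_x P_0\cdot(d\,\nu)$; altogether
\begin{equation*}
\p_r U = \frac{U_0}{r}\left[\frac12 P_0 + \nabla_x P_0\cdot(d\,\nu) + r\,D_rP_0 + O(|X|^{k+\alpha})\right],
\end{equation*}
which is \eqref{radial}, and then $P_0^r := \frac12 P_0 + \nabla_x P_0\cdot(d\,\nu) + r\,D_rP_0$ has degree $k$ (note $d\,\nu$ is a $C^{k,\alpha}_x$ vector field vanishing to first order, so $\nabla_x P_0\cdot(d\,\nu)$ is $C^{k,\alpha}_x$ and its polynomial part has degree $\le k$) and is visibly the formal $r$-derivative of $U_0 P_0$ written in the form dictated by \eqref{dr1}. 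The main obstacle is bookkeeping rather than conceptual: one must justify that multiplying the $O(|X|^{k+\alpha})$ remainders by the merely bounded (not smooth) factors $d/r$ and $x_{n+1}/r$ still yields an admissible error of the same order — which is fine since these factors are bounded by $1$ — and one must make sure that the non-smooth coefficients $d/r$, $x_{n+1}/r$ appearing in front of the \emph{polynomial} parts recombine exactly, via $d^2 + x_{n+1}^2 = r^2$, into the smooth expression claimed, with no leftover non-polynomial terms. I would handle this by doing the algebra in the two regions $\{r \ge |X|\cdot\text{const}\}$ (where everything is smooth) and simply recording that the final formula has smooth coefficients, so it extends across $\Gamma$.
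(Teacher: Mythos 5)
Your overall decomposition $\p_r U = \frac dr\,\nu\cdot\nabla_x U + \frac{x_{n+1}}{r}U_{n+1}$ is exactly the paper's strategy, and the treatment of the tangential piece via \eqref{form} is fine. The problem is the vertical piece. Your formula for $\p_{x_{n+1}}U_0$ is wrong: from $U_0=\frac{1}{\sqrt2}\sqrt{d+r}$ and $\p_{x_{n+1}}d=0$ one gets $\p_{x_{n+1}}U_0 = \frac{1}{2\sqrt2\sqrt{d+r}}\cdot\frac{x_{n+1}}{r} = \frac{x_{n+1}}{4rU_0}$, which equals your $\frac{U_0}{2r}\cdot\frac{x_{n+1}}{r}$ only on $\{d=0\}$. (Equivalently $\p_{x_{n+1}}U_0 = \frac{W_0}{2r}$ with $W_0:=r^{1/2}\sin\frac\theta2$, the harmonic conjugate of $U_0$ in the two dimensional slit plane.) As a consequence your claimed structural form $U_{n+1}=\frac{U_0}{r}\cdot\frac{x_{n+1}}{r}(P_0^{n+1}+\cdots)$ cannot hold: near the slit $\mathcal P$ (i.e.\ $\theta\to\pi$) the left side behaves like $r^{-1/2}$ while the right side tends to $0$ with $U_0$. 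This is also why your final algebra fails to close: with your expansion the $P_0$ coefficients in the two pieces are $\frac{d}{2r}$ and $\frac{x_{n+1}^2}{2r^2}$, and $\frac dr+\frac{x_{n+1}^2}{r^2}\neq1$ in general; the identity you invoke, $\frac{d^2}{r^2}+\frac{x_{n+1}^2}{r^2}=1$, does not apply to these coefficients. (With the correct $\p_{x_{n+1}}U_0$, the $P_0$ coefficients are $\frac{d}{2r}$ and $\frac{x_{n+1}^2}{4r^2U_0}\cdot\frac{r}{U_0}\cdot\frac12 = \frac{x_{n+1}^2}{2r(d+r)}$, which do sum to $\frac12$ since $d(d+r)+x_{n+1}^2=r(d+r)$.)

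The paper handles this by citing Lemma 5.5 of [DS3], which gives $U_{n+1}=\p_{x_{n+1}}(U_0P_0)+O(|X|^{k-\frac12+\alpha})$ in a cone — note the error is worse by a factor $r^{1/2}$ than the one for $U_i$, reflecting the genuine $r^{-1/2}$ blow up of $U_{n+1}$ near $\mathcal P$. The crucial observation that rescues the bookkeeping is not that $\frac{x_{n+1}}{r}$ is merely bounded, but the sharper estimate $|\p_{x_{n+1}}r|=\frac{|x_{n+1}|}{r}\le 2\,r^{-1/2}U_0$: this extra factor of $r^{-1/2}U_0$ is precisely what converts $O(|X|^{k-\frac12+\alpha})$ into the admissible $O(\frac{U_0}{r}|X|^{k+\alpha})$. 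You would need to incorporate this estimate, with the correct expansion of $U_{n+1}$, for the argument to go through. Your idea of restricting to cones where $r\gtrsim|X|$ and patching is in the same spirit as the paper's passage to the cones $\mathcal C_Z$, $Z\in\Gamma$, so once the local computation is corrected the structure of the argument is fine.
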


We also recall the following Theorem from \cite{DS3} which deals with the case when $\Gamma$ is straight.

\begin{thm} \label{SchauderL}
Assume $\Gamma=\{x_n=0\}$ and $u\in C(B_1)$ is even, $\|u\|\le 1$ and satisfies
$$\Delta u = 0 \quad \quad \text{in $B_1 \setminus \mathcal P$}, \quad \quad u=0 \quad \mbox{on $\mathcal P$.}$$
For any $m \ge 0$, there exists a polynomial $P_0(x,r)$ of degree $m$ such that $U_0 P_0$ is harmonic in $B_1 \setminus \mathcal P$ and
$$|u-U_0 P_0| \le K |X|^{m+1} U_0,$$
for some constant $K$ depending on $m$ and $n$.
\end{thm}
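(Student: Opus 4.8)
The plan is to linearize $U_0$ by a holomorphic square–root change of variables and reduce to a Grushin–type equation, whose regularity will produce both the correct homogeneity structure and the polynomial $P_0$. Since $\Gamma=\{x_n=0\}$, write $x_n+ix_{n+1}=w^2$, $w=s+it$ with $s\ge 0$, keeping $x'=(x_1,\dots,x_{n-1})$ fixed; then $x_n=s^2-t^2$, $x_{n+1}=2st$, $r=s^2+t^2$, $U_0=s$, and $\mathcal P=\{s=0\}$. Pulling back, $\tilde u(x',w):=u(x',\mathrm{Re}\,w^2,\mathrm{Im}\,w^2)$ solves the Baouendi–Grushin equation
$$\Delta_w\tilde u+4|w|^2\,\Delta_{x'}\tilde u=0\qquad\text{in }\{s>0\},$$
is even in $t$ (as $u$ is even in $x_{n+1}$), and vanishes on $\{s=0\}$; since the coefficient $4|w|^2$ is even in $s$, the odd reflection $\tilde u(x',-s,t):=-\tilde u(x',s,t)$ solves the same equation on a full neighborhood of the origin, where $\|\tilde u\|_{L^\infty}\le 1$.

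The key point — and the only real difficulty — is that $\tilde u$ is smooth (indeed real–analytic) near the origin: this is the hypoellipticity of the Grushin operator, the point being to push regularity across the degeneracy locus $\{w=0\}$, which is the image of the edge $\Gamma=\{x_n=x_{n+1}=0\}$, where $4|w|^2$ vanishes. Granting this, expand $\tilde u$ into parts $\tilde u_k$ homogeneous of weighted degree $k$ for the dilation $(x',w)\mapsto(\lambda^2x',\lambda w)$; as weighted–homogeneous components of a smooth (analytic) function these $\tilde u_k$ are weighted–homogeneous polynomials, and each solves the weighted–homogeneous equation above, so its pullback is harmonic in $B_1\setminus\mathcal P$. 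Since $\tilde u$ is odd in $s$ and even in $t$, only odd indices $k=2j+1$ appear, and $\tilde u_{2j+1}=s\,q_j(x',s^2,t^2)$ with $q_j$ a homogeneous polynomial of degree $j$ in $(x',s^2,t^2)$; using $s^2=(r+x_n)/2$, $t^2=(r-x_n)/2$ and pulling back, $\tilde u_{2j+1}$ is the pullback of $u_j:=U_0\,p_j$, where $p_j(x_1,\dots,x_n,r)$ is a homogeneous polynomial of degree $j$, so that
$$u=\sum_{j\ge0}U_0\,p_j\qquad\text{in }B_1\setminus\mathcal P,$$
each term harmonic. We then set $P_0:=\sum_{j=0}^m p_j$, a polynomial of degree $m$ in $(x_1,\dots,x_n,r)$ for which $U_0P_0=\sum_{j\le m}u_j$ is harmonic in $B_1\setminus\mathcal P$.

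It remains to estimate the tail $u-U_0P_0=\sum_{j>m}U_0\,p_j$. Cauchy estimates for the real–analytic $\tilde u$ (or interior estimates for $u$ together with $\|u\|_{L^\infty(B_1)}\le1$) bound the coefficients of $p_j$ by $C(n)^{\,j}$; since $p_j$ is homogeneous of degree $j$ and bounded on the unit sphere by $C(n)^{\,j}$, one has $|u_j|\le C(n)^{\,j}|X|^{j}\,U_0$, and summing, for $|X|\le c$ (with $c=c(n)>0$ universal)
$$|u-U_0P_0|\le U_0\sum_{j>m}\big(C(n)\,|X|\big)^{j}\le K\,|X|^{m+1}\,U_0 .$$
For $c\le|X|\le1$ the same bound (with a larger $K$) is immediate from the basic boundary Harnack $|u|\le CU_0$, from $|U_0P_0|\le\|P_0\|_{L^\infty(B_1)}\,U_0\le CU_0$, and from $|X|^{m+1}\ge c^{m+1}$. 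This proves the theorem.

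Thus the whole theorem rests on the smoothness of $\tilde u$ across $\{w=0\}$ — the (analytic–)hypoellipticity of $\Delta_w+4|w|^2\Delta_{x'}$; everything after that is the elementary bookkeeping of weighted–homogeneous expansions. One can also bypass hypoellipticity by building an explicit basis of the even slit–harmonics vanishing on $\mathcal P$ out of the planar blocks $\mathrm{Re}\big((x_n+ix_{n+1})^{\ell+1/2}\big)=U_0\,Q_\ell(x_n,r)$, where $Q_\ell$ is read off from $\mathrm{Re}(w^{2\ell+1})$ via $U_0^2=(r+x_n)/2$ and $(\mathrm{Im}\,w)^2=(r-x_n)/2$, dressed with Gegenbauer polynomials in the tangential variables, and verifying the $U_0$–times–polynomial structure term by term; cf. \cite{DS3}.
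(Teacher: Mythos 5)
Your proof takes a genuinely different route from the one in \cite{DS3} (the paper only cites this theorem; its proof there is the elementary algebraic/iterative construction of slit--harmonic polynomials via the linear system for the coefficients of $P$ with $U_0P$ harmonic, cf.\ the flat version \eqref{fc} of \eqref{A} in Section~3 — essentially the ``explicit basis'' route you mention in your final paragraph). Your main route — set $x_n+ix_{n+1}=w^2$, observe that the pullback solves the Baouendi–Grushin equation $\Delta_w\tilde u+4|w|^2\Delta_{x'}\tilde u=0$, reflect oddly across $\{s=0\}$, invoke hypoellipticity, and read off the weighted Taylor expansion — is conceptually attractive, and the parity bookkeeping (odd in $s$, even in $t$, weighted degree $2j+1$ $\Rightarrow$ $s\,q_j(x',s^2,t^2)=U_0\,p_j(x,r)$) is correct and clean. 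What each approach buys: yours explains structurally \emph{why} the expansion is in $U_0\cdot(\text{polynomial in }(x,r))$; the paper's is self-contained and constructive, producing the linear system needed later in the improvement-of-flatness iteration.

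However, there are two real gaps as written. First, the step ``the odd reflection solves the same equation on a full neighborhood of the origin'' is not immediate: away from $\{s=t=0\}$ the operator is uniformly elliptic and the classical reflection principle applies, but exactly on the codimension-two degeneracy set $\{s=t=0\}$ — where you most want to apply hypoellipticity — you must first show that the merely continuous, piecewise-smooth reflected function is a distributional solution, which requires a removable-singularity argument (cut off near $\{|w|<\epsilon\}$, integrate by parts, and control the term $\int\tilde u\,\phi\,\Delta_w\eta_\epsilon$, e.g.\ by using $\int\Delta_w\eta_\epsilon\,dw=0$ and the modulus of continuity of $\tilde u$). You cannot invoke interior hypoellipticity before this step, so as written there is a chicken-and-egg issue. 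Second, invoking \emph{analytic} hypoellipticity of $\Delta_w+4|w|^2\Delta_{x'}$ is a very heavy black box, arguably deeper than the theorem being proved: one must check that the characteristic variety $\{w=0,\ \xi_w=0\}$ is symplectic and that a Métivier/Tartakoff–Trèves type theorem applies (note the superficially similar Baouendi–Goulaouic operator $\partial_t^2+\partial_x^2+t^2\partial_y^2$ \emph{fails} analytic hypoellipticity). If you retreat to $C^\infty$ hypoellipticity (a routine Hörmander sum-of-squares check with $\partial_s,\partial_t,s\partial_{x_i},t\partial_{x_i}$), you no longer have the convergent series and geometric coefficient growth that you used to sum the tail; the weighted Taylor remainder then only gives $|u-U_0P_0|\le C|X|^{m+3/2}$, and an extra step is needed (for instance, applying the classical boundary Harnack to the slit-harmonic difference $u-U_0P_0$ on balls $B_{2|X|}$) to recover the sharper bound $C|X|^{m+1}U_0$. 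Both gaps are fixable, but they are not cosmetic.
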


We now proceed to prove Proposition \ref{PSchauder}. After careful computations are carried on, the proof follows from similar arguments as in Proposition 5.1 in \cite{DS3}.

After performing an initial dilation we may assume that:
\begin{equation}\label{flat1}\|g\|_{C^{k+1,\alpha}} \leq \delta, \quad |R| \leq \delta, \quad |F| \leq \delta r^{-1/2} |X|^{k+\alpha},\end{equation}
and after multiplying $U$ by a constant (see \eqref{expU}) we also have
\begin{equation}\label{flat2}U=U_0(1+ \delta Q_0+ \delta O(|X|^{k+\alpha})), \quad \quad \deg Q_0=k, \quad \|Q_0\|\le 1,\end{equation}
with the zero-th order term of $Q_0$ being 0.
The constant $\delta$ will be made precise later.

Next we define the notion of ``approximating polynomial" which plays a crucial role in our analysis. 

Let $\kappa(x)$ be the mean curvature of the parallel surface to $\Gamma$ passing through $x$ and $\nu(x)$ denote the normal to this parallel surface. Thus,
$$\kappa(x) = -\Delta d \in C^{k-1, \alpha}_x, \quad \nu(x)= \nabla d \in C^{k,\alpha}_x.$$ Then, one easily computes that ($m \geq 0$)
\begin{equation}\label{rm}\Delta r^m = m r^{m-2} (m-\kappa \, d).\end{equation}

Now, let $|\mu|+ m \leq k+1$ and 
 let $\bar i $ denote the multi-index with $1$ on the $i$-th position and zeros elsewhere.
Using \eqref{rm}, \eqref{dr}, and the fact that $U$ is harmonic in $B_1 \setminus \mathcal P$ we obtain
\begin{align*}
\Delta(x^\mu r^{m} U) &= U \Delta(x^\mu r^m) + 2 \nabla (x^\mu r^m) \cdot \nabla U \\
\  & = U \left (r^m \mu_i (\mu_i-1)\, x^{\mu - 2\bar i} + m \, x^\mu r^{m-2} (m-\kappa \, d) + 2 m \, r^{m-1} \frac d r \nu \cdot \nabla_x x^{\mu} \right )\\
\ &+ 2 \left (r^m \nabla x^\mu \cdot \nabla_x U + m \, x^\mu r^{m-1} \p_r U\right )\\
\ & = \frac{U}{r} I + 2II.
\end{align*}

By Taylor expansion at 0, we write (see \eqref{flat1} and recall $\nabla_{x'} g (0)=0$)
\begin{equation}\label{low} \nu^i= \delta_n^i + \ldots, \quad \kappa= \kappa(0)+\ldots, \quad d=x_n+\ldots
\end{equation}
We arrange the terms in $I$ by  the degree up to order $k$ and group the remaining ones in a remainder. Precisely, 
\begin{align*} I& =  m(m+2\mu_n) \, x^\mu r^{m-1} 
+\mu_i(\mu_i-1)\,x^{\mu-2\bar i} r^{m+1}  +  b_{\sigma l}^{\mu m} x^\sigma r^l + \delta O(|X|^{k+\alpha}),
\end{align*}
with $$b_{\sigma l}^{\mu m} \ne 0 \quad \mbox{only if} \quad \quad |\mu| + m-1<|\sigma|+l \le k.$$

Notice that the monomials $b_{\sigma l}^{\mu m} x^\sigma r^l$ have strictly higher degree than the first terms and together with the remainder can be thought as lower order terms. Also they are linear combinations of coefficients of the tangent polynomials at 0 for $d \kappa (x), d \nu^i$ thus, 
\begin{equation*}|b_{\sigma l}^{\mu m}|\le C \delta.\end{equation*}
Notice that $b_{\sigma l}^{\mu m}$ vanish in the flat case $\Gamma=\{x_n=0\}$.

To estimate $II$ we use \eqref{form}-\eqref{radial} and obtain
$$II = \frac{U_0}{r}\left [\frac 1 2 r^m \mu_n x^{\mu - \bar n} + \frac 1 2 m x^\mu r^{m-1} + p_{\sigma l}^{\mu m } x^\sigma r^l + \delta O(|X|^{k+\alpha})\right],$$
where the coefficients $p_{\sigma l}^{\mu m}$ have the same properties as the $b_{\sigma l}^{\mu m}.$

Thus, using \eqref{flat2} we conclude that
\begin{align*}\Delta(x^\mu r^{m} U) & =  \frac{U_0}{r}[m(m+1+2\mu_n) \, x^\mu r^{m-1} + r^m \mu_n x^{\mu - \bar n}\\
&+\mu_i(\mu_i-1)\,x^{\mu-2\bar i} r^{m+1}  +  c_{\sigma l}^{\mu m} x^\sigma r^l + \delta O(|X|^{k+\alpha})]\end{align*}
with $$c_{\sigma l}^{\mu m} \ne 0 \quad \mbox{only if} \quad \quad |\mu| + m-1<|\sigma|+l \le k,$$
and \begin{equation}\label{small}|c_{\sigma l}^{\mu m}|\le C \delta.\end{equation}

If $$P= a_{\mu m} x^\mu r^m \quad \mbox{is a polynomial of degree $k+1$,}$$
then
\begin{equation}\label{DUP}
\Delta (UP)= \frac{U_0}{r} (A_{\sigma l} x^\sigma r^l + \delta O(|X|^{k+\alpha})), \quad \quad \quad |\sigma|+l \leq k,
\end{equation}
with
\begin{align}\label{A} A_{\sigma l} & =  (l+1)(l+2 + 2\sigma_n) \, a_{\sigma,l+1}  + (\sigma_n +1) a_{\sigma +\bar n,l} +\\
 \nonumber &+ (\sigma_i +1)(\sigma_i+2) a_{\sigma+2\bar i,l-1} + c^{\mu m}_{\sigma l} a_{\mu m}.
 \end{align}
 
From \eqref{A} we see that $a_{\sigma, l+1}$ (whose coefficient is different than 0) can be expressed in terms of $A_{\sigma l}$ and a linear combination of $a_{\mu m}$ with $ \mu + m < |\sigma| + l +1$
plus a linear combination of $a_{\mu m}$ with $ \mu + m = |\sigma| + l +1$ and $m < l+1$. This shows that the coefficients $a_{\mu m}$ are uniquely determined from the linear system \eqref{A} once $A_{\sigma l}$ and $a_{\mu 0}$ are given.

 \begin{defn}\label{d1} We say that $P$ is approximating for $u/U$ at 0  if $A_{\sigma l}$ coincide with the coefficients of $R$.
 \end{defn}

{\it Remark:} Here we point out the difference between approximating using $U$ and $U_0$. If we want to obtain an expansion as in \eqref{DUP} for $\triangle (U_0P)$ then we need to require $\Gamma \in C^{k+2,\alpha}$ in order to deal with the terms $x^\mu r^m\triangle U_0$.

\smallskip

The following improvement of flatness lemma is the key ingredient in the proof of Proposition \ref{PSchauder}.

\begin{lem}\label{Imp} There exist universal constants $\rho$, $\delta$ depending only on $k, \alpha$ and $n$, such that if $P$ with $\|P\| \le 1$ is an approximating polynomial for $u/U$ in $B_\lambda$, that is $P$ is approximating for $u/U$ at 0 and $$\|u-U P\|_{L^\infty(B_\lambda)} \leq \lambda^{k+3/2+\alpha},$$ for some $\lambda>0$, then there exists an approximating polynomial $\bar P$ for $u/U$ at $0$ such that in $B_{\rho \lambda}$:
$$\|u -U\bar P\| _{L^\infty(B_{\rho\lambda})} \leq (\rho\lambda)^{k+3/2+\alpha}, \quad \quad \|\bar P- P\|_{L^\infty(B_\lambda)} \le C \lambda^{k+1+\alpha}.$$
\end{lem}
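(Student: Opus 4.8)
The plan is to run the standard De~Silva--Savin normalization/compactness scheme in the concentric balls $B_{\rho^l}$, adapted to the slit setting. By a dilation we may place ourselves in the regime \eqref{flat1}--\eqref{flat2}, so that $\Gamma$, $R$, $F$ and the expansion of $U$ relative to $U_0$ are all controlled by the small parameter $\delta$; the point is then to iterate a single improvement step. The first step is a \emph{normalization}: given the approximating polynomial $P$ in $B_\lambda$, rescale by setting $\tilde u(X) := \lambda^{-(k+3/2+\alpha)} (u - UP)(\lambda X)$ on $B_1$ (and correspondingly rescale $U$, $\Gamma$ and $F$). Using the algebraic identity \eqref{DUP}--\eqref{A} together with Definition~\ref{d1}, the Laplacian $\Delta(UP)$ cancels the polynomial part $\frac{U_0}{r}R$ of $\Delta u$ up to degree $k$, leaving only the $\delta O(|X|^{k+\alpha})$ error from \eqref{DUP} plus the genuine remainder $F$; after rescaling both contribute a right-hand side of the form $\tfrac{U_0}{r}\,(\text{bounded poly of degree }k)\;+\;(\text{error of size }C\delta\,r^{-1/2}|X|^{k+\alpha})$ for $\tilde u$, i.e. $\tilde u$ again satisfies an equation of the type in Proposition~\ref{PSchauder} with constants uniformly controlled (and the polynomial coefficients of size $\le C\delta$).

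The second step is the \emph{dichotomy / improvement}. If $\|\tilde u\|_{L^\infty(B_1)} \le 1$ we want to find a polynomial $\bar P - P$, of size $\le C\lambda^{k+1+\alpha}$, that is approximating for $u/U$ and improves the decay by the factor $\rho^{k+3/2+\alpha}$ on $B_\rho$. I would do this by compactness: if the conclusion failed there would be a sequence of configurations with $\delta_j \to 0$ and $\lambda_j$'s for which no such correction works; passing to the limit, $U_0^{-1}\tilde u_j$ (or the appropriate quotient, using the straight-boundary Schauder Theorem~\ref{SchauderL} to control $U\to U_0$ and the expansions \eqref{form}, \eqref{radial}) converges to a function $w$ solving the \emph{flat} problem $\Delta(U_0 w)=0$ in $B_1\setminus\{x_n\le 0,\,x_{n+1}=0\}$, even in $x_{n+1}$, vanishing on the slit, with $\|U_0 w\|\le 1$ and orthogonal to the space of approximating polynomials of degree $k+1$. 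Theorem~\ref{SchauderL} (with $m=k+1$) gives a degree-$(k+1)$ polynomial $P_0$ with $|U_0 w - U_0 P_0|\le K|X|^{k+2}U_0$; since $w\perp$ (approximating polynomials), $P_0$ must itself be negligible, and one gets the decay $|U_0 w|\le K|X|^{k+2}U_0$, which on $B_\rho$ beats $\rho^{k+3/2+\alpha}$ once $\rho$ is chosen small (here $k+2 > k+3/2+\alpha$ for $\alpha<1/2$, which is the only case that matters). This contradicts the failed sequence for $j$ large, once $\delta$ is also fixed small depending on $\rho$. Finally one \emph{unwinds} the rescaling: the correcting polynomial at scale $\lambda$ is $\lambda^{k+3/2+\alpha}$ times the (degree $k+1$) polynomial produced in $B_1$, reindexed; examining degrees, each coefficient of $x^\sigma r^l$ gets multiplied by $\lambda^{k+3/2+\alpha-|\sigma|-l}\ge \lambda^{k+1+\alpha}$ (since $|\sigma|+l\le k+1$ and the degree-$(k+1)$, $r$-power-$0$ part is fixed separately), giving $\|\bar P - P\|_{L^\infty(B_\lambda)}\le C\lambda^{k+1+\alpha}$ as claimed; and $\bar P$ remains approximating because correcting by an approximating polynomial preserves the property (the map $P\mapsto (A_{\sigma l})$ in \eqref{A} is affine, and the correction's $A_{\sigma l}$ vanish by construction).

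The main obstacle I expect is \emph{making the compactness limit rigorous in the slit geometry}: the natural normalization divides by $U_0$, which degenerates like $r^{1/2}$ at $\Gamma$, so uniform estimates for $\tilde u / U_0$ up to the slit are not automatic. One must combine the interior estimates away from $\Gamma$ with the boundary behavior encoded by \eqref{expU}, \eqref{form}, \eqref{radial} and Lemma~\ref{pr} to get equicontinuity of $\tilde u_j/U_0$ in the $(x,r)$ variables; this is exactly where having $U$ (harmonic, hence $\Delta(UP)$ computed exactly via \eqref{DUP} needing only $\Gamma\in C^{k+1,\alpha}$) rather than $U_0$ (which would cost a derivative, cf. the Remark after Definition~\ref{d1}) pays off. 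A secondary technical point is verifying that the error terms $c^{\mu m}_{\sigma l}$ and the $\delta O(|X|^{k+\alpha})$ in \eqref{DUP} scale correctly and stay of size $\le C\delta$ after the $\lambda$-rescaling, which follows from \eqref{small} and the fact that rescaling a $C^{k+1,\alpha}$ slit by $\lambda\le 1$ does not increase its norm. Once the improvement Lemma~\ref{Imp} is in hand, Proposition~\ref{PSchauder} follows by the usual iteration (summing the geometric series $\sum \|\bar P_{l+1}-\bar P_l\|$ to produce the limiting tangent polynomial of degree $k+1$ and the $O(|X|^{k+1+\alpha})$ decay), and Theorem~\ref{SchauderO} follows at $0$, hence at every point of $\Gamma\cap B_{1/2}$.
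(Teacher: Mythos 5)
Your overall scheme (rescale to $B_1$, use compactness in the limit $\delta\to 0$ to approximate by the flat slit problem, invoke Theorem~\ref{SchauderL} to obtain the new degree-$(k+1)$ polynomial, then unwind) matches the paper's, but your dichotomy/contradiction step contains a genuine gap that also propagates into your closing paragraph.

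You assert that the limit $w$ is ``orthogonal to the space of approximating polynomials of degree $k+1$,'' and from this you conclude that the polynomial $P_0$ produced by Theorem~\ref{SchauderL} is negligible, so $|U_0 w|\le K|X|^{k+2}U_0$. This orthogonality is not a consequence of the failure hypothesis and you never arrange it by normalization. In general the rescaled sequence $\tilde u_j$ carries a nontrivial degree-$(k+1)$ polynomial part, and so does its limit $w$; nothing forces $P_0$ to vanish. The correct version of the step is precisely the opposite: use the (typically nonzero) polynomial $Q$ from Theorem~\ref{SchauderL} as the correction, getting $\|\tilde u -\tilde U Q\|_{L^\infty(B_\rho)}\le C\rho^{k+5/2}+C\delta\le \tfrac12\rho^{k+3/2+\alpha}$ after choosing $\rho$ then $\delta$ small, and define $\bar P=P+\lambda^{k+1+\alpha}Q(X/\lambda)$. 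There is no need for $w$ to have extra decay. (The paper, incidentally, avoids the contradiction framework entirely: it decomposes $\tilde u=\tilde u_0+\tilde v$, kills the right-hand side contribution $\tilde v$ with the barrier Lemma~\ref{bar}, and applies compactness only to the harmonic piece $\tilde u_0$. That decomposition is cleaner but not essential.)

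The second gap is in your claim that ``the correction's $A_{\sigma l}$ vanish by construction.'' The polynomial $Q$ from Theorem~\ref{SchauderL} satisfies the \emph{flat} linear relations \eqref{fc}, i.e.\ the system \eqref{A} with the curvature coefficients $c^{\mu m}_{\sigma l}$ set to zero. For the actual rescaled slit the $c^{\mu m}_{\sigma l}$ are nonzero (only small, of size $C\delta$, by \eqref{small}), so $P+\lambda^{k+1+\alpha}Q(X/\lambda)$ is \emph{not} approximating in the sense of Definition~\ref{d1}. One must explicitly perturb $Q\to\bar Q$ by solving the system \eqref{barQ}: the difference $Q-\bar Q$ solves \eqref{barQ} with right-hand side $A_{\sigma l}=\bar c^{\mu m}_{\sigma l}q_{\mu m}=O(\delta)$, and since the system is uniquely solvable once the $r$-free coefficients $\bar q_{\mu 0}$ are fixed, one gets $\|\bar Q-Q\|\le C\delta$. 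This perturbation is what makes $\bar P=P+\lambda^{k+1+\alpha}\bar Q(X/\lambda)$ approximating while still satisfying the improved decay (the $C\delta$ loss is absorbed by the margin in the choice of $\rho,\delta$). Your proposal gestures at preserving the approximating property via the affine structure of \eqref{A}, but without this explicit correction the argument is incomplete.
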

\begin{proof}
Set
 $$u-U P=:\lambda^{k+\frac 3 2 +\alpha} \tilde u (\frac X \lambda).$$
Thus, since $P$ is approximating
\begin{equation}\label{Ft}
 \Delta \tilde u(\frac X \lambda)= \lambda^{\frac 12 -k-\alpha}\left (F(X)-\delta \frac{U_0}{r}O(|X|^{k+\alpha})\right)=:\tilde F(\frac X \lambda).
 \end{equation}

Using the hypothesis on $u$ and $F$ we find
$$ |\tilde u(X)| \le 1, \quad \quad \quad |\Delta \tilde u(X)| \leq C \delta r^{-\frac 12}  \quad \text{in $B_1$.}$$
Denote by $\tilde \Gamma$, $\tilde {\mathcal P}$, $\tilde U_0$, $\tilde U$ the rescalings of $\Gamma$, $\mathcal P$, $U_0$ and $\tilde U$ from $B_\lambda$ to $B_1$ i.e. $$\tilde \Gamma:=\frac{1}{\lambda} \Gamma, \quad \tilde{\mathcal P}: = \frac 1 \lambda \mathcal P, \quad  \tilde U_0(X) := \lambda^{-\frac 12} U_0(\lambda X),  \quad \tilde U(X) := \lambda^{-\frac 12} U(\lambda X).$$

We decompose $\tilde u$ as $$\tilde u= \tilde u_0 + \tilde v$$ with
$$\begin{cases}
\Delta \tilde u_0 =0 \quad \quad\text{in $B_1 \setminus \tilde{\mathcal P}$},\\
\tilde u_0 = \tilde u \quad \quad \quad \text {on $\p B_1 \cup \tilde{\mathcal P}$,}
\end{cases}$$
and
$$\begin{cases}
|\Delta \tilde v| \leq C \delta r^{-\frac 12}   \quad \quad \text{in $B_1 \setminus \tilde{\mathcal P}$},\\
\tilde v = 0 \quad \quad \quad \quad \quad \text {on $\p B_1 \cup \tilde{\mathcal P}.$}\\
\end{cases}$$

Using barriers it follows that (see (5.6) in  \cite{DS3} or Lemma \ref{bar} in Section 5)
\begin{equation}\label{claim}
\|\tilde v\|_{L^\infty(B_1)} \leq C\delta \tilde U_0.
\end{equation}

To estimate $\tilde u_0$ we observe that  $\tilde u_0$ is a harmonic function in $B_1 \setminus \tilde{\mathcal P},$ $|\tilde u_0| \leq 1$ and as $\delta \rightarrow 0$, $\tilde {\Gamma}$ converges in the $C^{k+1,\alpha}$ norm to the hyperplane $\{x_n=0\}.$ 
Moreover, $\tilde u_0$ is uniformly H\"older continuous in $B_{1/2}$. 
By compactness, if $\delta$ is sufficiently small, $\tilde u_0$ can be approximated in $B_{1/2}$ by a solution of the Laplace problem with $\Gamma=\{x_n=0\}$. 
Thus by Theorem \ref{SchauderL}, and the fact that $\tilde U \to \tilde U_0$ uniformly as $\delta \to 0$ (see \eqref{flat2}) we deduce that 
\begin{equation}\label{Q}
\|\tilde u_0 - \tilde U Q\|_{L^\infty(B_\rho)} \leq C \rho^{k+2+\frac 1 2 }, \quad \quad \deg Q=k+1,
\end{equation}
with $\|Q\| \le C$. Since $U_0Q$ is harmonic we also get that the coefficients of $Q$
 satisfy (see \eqref{A})
\begin{equation}\label{fc}(l+1)(l+2 + 2\sigma_n) \,  q _{\sigma,l+1}  + (\sigma_n +1) \, q _{\sigma +\bar n,l} + (\sigma_i +1)(\sigma_i+2) \, q_{\sigma+2\bar i,l-1} =0,\end{equation}
with bounded $q_{\mu m}$.

Using also \eqref{claim} we find
$$\|\tilde u - \tilde U Q\|_{L^\infty(B_\rho)} \leq C \rho^{k+\frac 5 2 } +C\delta \leq \frac 1 2 \rho^{k+\frac 3 2 +\alpha}$$
provided that we choose first $\rho$ and then $\delta$, universal, sufficiently small.

Writing this inequality in terms of the original function $u$ we find,
$$\left |u-U \left (P+\lambda^{k+1+\alpha}Q(\frac X \lambda)\right ) \right | \leq \frac 1 2 (\lambda \rho)^{\frac 3 2 + \alpha} \quad \quad \mbox{in $B_{\rho\lambda}$}.$$
However $P(X)+\lambda^{k+1+\alpha}Q(X/ \lambda)$ is not an approximating polynomial and therefore we need to perturb $Q$ by a small amount.

Precisely,  we need to modify $Q$ into $\bar Q$ such that $\bar Q(x/\lambda, r/\lambda)$ is approximating for $R \equiv 0$. Thus its coefficients solve the system \eqref{A} with $A_{\sigma l}=0$ and rescaled $c^{\mu m}_{\sigma l}$, i.e.
\begin{align}\label {barQ}   (l+1)(l+2 + 2\sigma_n) \, \bar q_{\sigma,l+1}  + (\sigma_n +1) \bar q_{\sigma +\bar n,l} +&  \\
 \nonumber + (\sigma_i +1)(\sigma_i+2) \bar q_{\sigma+2\bar i,l-1} +  \bar c^{\mu m}_{\sigma l} \bar q _{\mu m}&=0,
 \end{align}
 with
 $$\bar c^{\mu m}_{\sigma l}:=\lambda^{|\sigma|+l+1-|\mu|-m}c^{\mu m}_{\sigma l}, \quad \quad \mbox{hence} \quad |\bar c^{\mu m}_{\sigma l}|\le |c^{\mu m}_{\sigma l}| \le C \delta.$$
 After subtracting \eqref{barQ} from \eqref{fc} we see that the coefficients  of $Q-\bar Q$ solve the linear system \eqref{barQ} with right hand side
 $A_{\sigma l} = \bar c^{\mu m}_{\sigma l}  q _{\mu m} $, hence $|A_{\sigma l}| \le C \delta$. As we mentioned before Definition 3.4, this system is uniquely solvable after choosing 
 $\bar q_{\mu 0}-q_{\mu 0}=0$ and we find
 $$\|\bar Q-Q\| \leq C\delta.$$
 This concludes the proof.
  \end{proof}

\begin{rem}
The classical boundary Harnack inequality implies that $|\tilde u_0| \le C \tilde U_0$ which together with \eqref{claim} gives
$$|\tilde u| \le C \tilde U_0 \quad \mbox{in $B_{1/2}$}.$$
This shows that the hypothesis of Proposition \ref{PSchauder} can be improved to
$$|u- UP| \le C U_0 \lambda^{k+1+\alpha} \quad \mbox{in $B_{\lambda/2}$}.$$

\end{rem}

We can now conclude the proof of Proposition \ref{PSchauder}. 

After multiplying $u$ by a small constant, we see that the hypotheses of the lemma are satisfied for some initial $\lambda_0$ small. Indeed, the coefficients of $R$ become sufficiently small and, by \eqref{A}, we can choose an initial approximating polynomial $P_{\lambda_0}$ with $\|P_{\lambda_0}\| \le 1/2$. Now we may iterate the lemma for all $\lambda=\lambda_0 \rho^m$ and conclude that there exists a limiting approximating polynomial $P_0$, $\|P_0\|\le 1$, such that $$|u-U P_0|\leq C|X|^{k+\frac 3 2 +\alpha} \quad \quad \mbox{in $B_1$.}$$ Moreover in view of the remark above the right hand side can be replaced by $C U_0 |X|^{k+1+\alpha}$ or equivalently by $C U |X|^{k+1+\alpha}$, and the proposition is proved.

\section{$C^{1,\alpha}$ boundaries}

We start by proving the Schauder estimate Theorem \ref{S_00}. For the reader's convenience we state it again.

\begin{thm}\label{S_0} Let $u$ be a solution to 
$$|\Delta u| \leq r^{\alpha - \frac 3 2} \quad \text{in $B_1 \setminus \mathcal P$}, \quad u=0 \quad \text{on $\mathcal P$}$$ with $\|u\|_{L^\infty} \leq 1$ and $\|\Gamma\|_{C^{1,\alpha}} \leq 1$. Then,
$$\left \|\frac{u}{U_0} \right \|_{C^\alpha_{xr}(\Gamma \cap B_{1/2})} \leq C$$ with $C>0$ depending on $n$ and $\alpha.$
\end{thm}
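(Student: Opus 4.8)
\noindent We sketch the argument; it is the $k=0$ specialization of the scheme of Section~\ref{s6}, with the polynomial bookkeeping suppressed. Fix $Z\in\Gamma\cap B_{1/2}$ and translate so that $Z=0$. A ``polynomial of degree $k=0$'' is just a constant, and at $k=0$ the forcing in \eqref{FB} has no lower–order polynomial part to match, so the system \eqref{A} and the correction $\bar Q$ of Section~\ref{s6} never appear: the entire task is to produce one constant $a$, $|a|\le C$, with
$$\left|\frac{u}{U_0}-a\right|\le C|X|^\alpha,\qquad\text{i.e.}\qquad |u-aU_0|\le C|X|^\alpha U_0\quad\text{near }0,$$
and to check that $C$ is uniform in $Z$.

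\emph{Step 1: an $L^\infty$ iteration.} After an initial universal dilation we may assume $\|g\|_{C^{1,\alpha}(B_1')}\le\delta$. The key claim is an improvement of flatness: there are universal $\rho,\delta>0$ and a large universal $M$ so that, if $|u-aU_0|\le M\lambda^{1/2+\alpha}$ on $B_\lambda$ with $|a|$ bounded, then there is $\bar a$ with $|\bar a-a|\le CM\lambda^{\alpha}$ and $|u-\bar aU_0|\le M(\rho\lambda)^{1/2+\alpha}$ on $B_{\rho\lambda}$. To prove it, rescale $v(X):=M^{-1}\lambda^{-1/2-\alpha}(u-aU_0)(\lambda X)$, so $\|v\|_{L^\infty(B_1)}\le1$ and $v=0$ on the rescaled slit $\widetilde{\mathcal P}$; using $|\Delta u|\le r^{\alpha-3/2}$, the identity $\Delta U_0=-\tfrac{\kappa}{2r}U_0$ (which follows from \eqref{U0}, \eqref{dr}, \eqref{dr1}, \eqref{rm}), and the scaling $\|\widetilde g\|_{C^{1,\alpha}}\le\delta$, the forcing $\Delta v$ is controlled, in a weak sense, by $\tfrac{C(1+|a|\delta)}{M}\,r^{\alpha-3/2}$, which is small because $M$ is large. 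Decompose $v=v_0+\tilde v$, with $v_0$ harmonic in $B_1\setminus\widetilde{\mathcal P}$, $v_0=v$ on $\p B_1\cup\widetilde{\mathcal P}$, and $\tilde v=0$ on $\p B_1\cup\widetilde{\mathcal P}$. Barriers built from the solution of $\Delta W=-r^{\alpha-3/2}$ in the straight slit disk — whose leading angular profile is $\sim r^{\alpha}U_0$ — give $\|\tilde v\|_{L^\infty(B_1)}\le\tfrac{C(1+|a|\delta)}{M}U_0$; for the part of the forcing coming from $\kappa$ one writes $\kappa=-\operatorname{div}_x(\nu-e_n)$ and integrates by parts against the Green function, so that the smallness of $\nu-e_n$ is exploited. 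For $v_0$: as $\delta\to0$ the rescaled domain flattens, $v_0$ is uniformly Hölder, and $\widetilde U_0\to U_0^{\mathrm{flat}}$; by compactness and Theorem~\ref{SchauderL} with $m=0$ (a harmonic function vanishing on the flat slit equals a constant times $U_0$ up to an error $O(|X|U_0)$) we get $\|v_0-qU_0\|_{L^\infty(B_\rho)}\le C\rho^{3/2}+C\delta$ with $|q|\le C$. Adding the two bounds, and choosing $\rho$ small (using $3/2>1/2+\alpha$) and then $\delta$ small, gives $\|v-qU_0\|_{L^\infty(B_\rho)}\le\tfrac12\rho^{1/2+\alpha}$; unwinding and using $U_0(\,\cdot/\lambda)=\lambda^{-1/2}U_0$ gives $\bar a=a+M\lambda^{\alpha}q$. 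Iterating along $\lambda=\rho^j$ — the bound $|a_j|\le C$ surviving because $\sum_j M\rho^{j\alpha}<\infty$ and $\delta$ may be taken small relative to $M^{-1}$ and $1-\rho^\alpha$ — yields $a_j\to a$ and, undoing the dilation, $|u-aU_0|\le C|X|^{1/2+\alpha}$ in a fixed neighborhood of $0$.

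\emph{Step 2: upgrade to the $U_0$–weighted bound.} Put $w:=u-aU_0$, which vanishes on $\mathcal P$ and satisfies $|\Delta w|\le Cr^{\alpha-3/2}$. For $X$ near $0$ choose a dyadic $\lambda$ with $X\in B_{\lambda/2}$ and $\lambda\simeq|X|$; on $B_\lambda$ we have $|w|\le C\lambda^{1/2+\alpha}$. Split $w=w_0+w_1$ on $B_\lambda$ as in Step~1. The classical boundary Harnack inequality in slit domains ($k=0$, see \cite{CFMS}, \cite{Fe}), applied to the harmonic function $w_0$ vanishing on $\mathcal P$, upgrades $|w_0|\le C\lambda^{1/2+\alpha}$ on $\p B_\lambda$ to $|w_0|\le C\lambda^{\alpha}U_0$ on $B_{\lambda/2}$, while the same $\sim r^{\alpha}U_0$ barrier gives $|w_1|\le C\lambda^{\alpha}U_0$ on $B_\lambda$. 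Hence $|w(X)|\le C|X|^{\alpha}U_0(X)$, i.e. $u/U_0\in C^{\alpha}_{xr}(0)$ with the claimed bound. Since every hypothesis is translation– and scale–invariant, running the whole argument at each $Z\in\Gamma\cap B_{1/2}$ with the same constants gives $\|u/U_0\|_{C^{\alpha}_{xr}(\Gamma\cap B_{1/2})}\le C$.

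\emph{Main obstacle.} The real difficulty — and why this was left incomplete in \cite{DS2} — is that for a merely $C^{1,\alpha}$ slit the curvature $\kappa$, hence $\Delta U_0=-\tfrac{\kappa}{2r}U_0$, is only a distribution, so one cannot run the Section~\ref{s6} scheme of matching $\Delta(U_0P)$ to a polynomial forcing (that needs $\Gamma\in C^{k+2,\alpha}$; cf.\ the Remark after Definition~\ref{d1}). The way around it is exactly that at $k=0$ there is nothing to match: the singular term $\tfrac{\kappa}{2r}U_0$ enters only as a right–hand side for $\tilde v$, where writing $\kappa=-\operatorname{div}_x(\nu-e_n)$ and integrating by parts converts it into a genuinely small contribution. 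The second delicate point, present already when $\Gamma$ is flat, is that the forcing $r^{\alpha-3/2}$ is critically singular at the slit, so the barriers must be the angular profiles $\sim r^{\alpha}U_0$ rather than naive powers of $r$ (which fail in the thin region near the slit where $U_0\ll r^{1/2}$).
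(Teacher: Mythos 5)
Your high-level architecture matches the paper: improvement of flatness, rescaling and decomposition into a forced piece $\tilde v$ controlled by a barrier and a harmonic piece $v_0$ handled by compactness via Theorem~\ref{SchauderL}, iteration to an $L^\infty$ bound, and an upgrade to the $U_0$-weighted bound via the classical boundary Harnack inequality. You also correctly diagnose the central difficulty: $\Delta U_0 = -\tfrac{\kappa}{2r}U_0$ is only a distribution when $\Gamma\in C^{1,\alpha}$.

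However, your proposed resolution of that difficulty has a genuine gap. You keep $U_0$ itself as the comparison function and propose to absorb the distributional term $a\,\Delta U_0$ by writing $\kappa=-\operatorname{div}_x(\nu-e_n)$ and ``integrating by parts against the Green function.'' This is precisely the step that is not fleshed out, and it is not at all routine: after the integration by parts you face products of $\nabla_Y G(X,Y)$ (and $G$) with $(\nu-e_n)\cdot\tfrac{U_0}{2r}$ and $(\nu-e_n)\cdot\nabla\bigl(\tfrac{U_0}{2r}\bigr)$, where $\tfrac{U_0}{r}\sim r^{-1/2}$ and $\nabla\bigl(\tfrac{U_0}{2r}\bigr)\sim r^{-3/2}$. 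Obtaining the claimed $U_0$-weighted smallness for $\tilde v$ from these integrals requires sharp, scale-invariant Green function estimates in a slit domain that is only $C^{1,\alpha}$, together with a careful accounting of the critical $r^{\alpha-3/2}$ singularity near the slit; none of this is supplied. Moreover, your barrier step also tacitly needs to be carried out in the \emph{curved} domain, and building a supersolution of the form $\sim r^\alpha U_0$ there again requires controlling the Laplacian of functions built from $r$ and $U_0$ --- which is exactly the regularity obstruction you are trying to sidestep. The paper avoids all of this by a different device: it never differentiates $U_0$ twice. Lemma~\ref{reg_1} constructs explicit smooth regularizations $\bar r,\bar U_0$ (by mollifying $d$ at dyadic scales and gluing) with $|\bar U_0/U_0-1|\le C\delta r^\alpha$, $|\nabla\bar r-\nabla r|\le C\delta r^\alpha$, $|\Delta\bar r-1/r|\le C\delta r^{\alpha-1}$, and crucially $|\Delta\bar U_0|\le C\delta r^{\alpha-3/2}$. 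The improvement of flatness (Lemma~\ref{imp_0}) is then run with $\bar U_0$ in place of $U_0$, so the rescaled right-hand side is a genuine function bounded by $C\delta r^{\alpha-3/2}$, and the barrier $V=\bar U_0-\bar U_0^{1+2\alpha}$ (Lemma~\ref{bar}) is smooth by construction. At the end one passes back to $U_0$ using $|\bar U_0/U_0-1|\le C\delta r^\alpha$. This regularization is the actual content of the $C^{1,\alpha}$ case, and your proposal replaces it with an unjustified analytic device. You would need to either carry out the Green-function estimates rigorously (a nontrivial project) or, better, introduce $\bar U_0,\bar r$ as in the paper.
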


At the origin, the theorem states that there exists a constant $a$, $|a| \le C$ such that
\begin{equation}\label{30}
|u-aU_0| \le C |X|^\alpha U_0.
\end{equation}
It turns out that if $u$ is harmonic then we can differentiate formally the inequality above.

\begin{lem}\label{s_0}
Assume that $u$ is harmonic, satisfies the hypotheses of Theorem $\ref{S_0}$, and the expansion \eqref{30} holds. Then, for a.e. $X \in B_{1/2}$ we have
$$|\nabla u-  \nabla (a U_0)| \le C |X|^{\alpha} r^{- \frac 1 2} \quad \quad |\nabla _x u -\nabla_x (a U_0)| \le C |X|^{\alpha} \frac{ U_0}{r}.    $$
\end{lem}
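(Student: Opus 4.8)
The plan is to derive the gradient estimates in Lemma~\ref{s_0} from the $C^\alpha_{xr}$ expansion \eqref{30} by exploiting that $w:=u-aU_0$ is \emph{almost harmonic}: since $u$ is harmonic and $\Delta U_0 = \frac{U_0}{r}(-\kappa d)/(2r)\cdot(\text{const})$ — more precisely $\Delta U_0$ is, away from the slit, controlled by $r^{-3/2}$ times the $C^{0,\alpha}$-size of $\kappa d$, which vanishes to order $r^\alpha$ at $\Gamma$ — we have $|\Delta w| \le C r^{\alpha-3/2}$ in $B_{1/2}\setminus\mathcal P$, together with $w=0$ on $\mathcal P$ and $|w|\le C|X|^\alpha U_0$. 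The strategy is then the standard rescaling/interior-estimate argument: fix $X_0\in B_{1/4}$ and let $\rho = \frac12 d_{\mathcal P}(X_0)$ be (comparable to) the distance from $X_0$ to the slit $\mathcal P$; on the ball $B_\rho(X_0)$, which does not meet $\mathcal P$, the function $w$ is a solution of $|\Delta w|\le C r^{\alpha-3/2}$ and one applies standard interior gradient estimates.

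The key steps, in order, would be: (i) Show $|\Delta U_0|\le C r^{\alpha - 3/2}$ using \eqref{rm} with $m=1/2$ in the form $\Delta U_0 = $ (a multiple of) $U_0 r^{-2}(\tfrac12 - \kappa d) + (\text{transversal terms})$ and the fact that $|\kappa|\le C$, $|d|\le r$, so the $\frac12$ and $\kappa d$ terms alone would give only $r^{-3/2}$; the gain of $r^\alpha$ comes because $U_0$ itself, being harmonic in the flat model, makes the leading $r^{-3/2}$ term cancel, leaving only the curvature defect $\kappa d - \kappa(0)d$-type quantities which are $O(r^\alpha)\cdot r^{-3/2}$ since $\Gamma\in C^{1,\alpha}$ — this is exactly the computation behind the Remark after Definition~\ref{d1} and should be quoted/reproduced briefly. (ii) Conclude $|\Delta w|\le C r^{\alpha-3/2}$, $|w|\le C|X|^\alpha U_0$, $w=0$ on $\mathcal P$. (iii) For $X_0$ with $\rho := \tfrac14\,\mathrm{dist}(X_0,\mathcal P)$, note $U_0 \approx r^{1/2}$ on $B_{2\rho}(X_0)$ and $|X|\approx |X_0|$ there (after also treating the trivial region $|X_0|\lesssim\rho$), so $\|w\|_{L^\infty(B_{2\rho}(X_0))}\le C|X_0|^\alpha \rho^{1/2}$ and $\|\Delta w\|_{L^\infty(B_{2\rho}(X_0))}\le C\rho^{\alpha-3/2}$. (iv) Apply interior Schauder/gradient estimates on $B_{2\rho}(X_0)$: subtracting the harmonic replacement, $|\nabla w(X_0)| \le C\rho^{-1}\|w\|_{L^\infty} + C\rho\,\|\Delta w\|_{L^\infty} \le C|X_0|^\alpha\rho^{-1/2} + C\rho^{\alpha-1/2} \le C|X_0|^\alpha r^{-1/2}$ since $\rho\le r\le C|X_0|$ on the slit side; this is the first claimed bound. (v) For the second, anisotropic bound, observe that on the slit $\mathcal P$ we have $w=0$, hence the tangential derivatives $\nabla_x w$ (the derivatives along $\{x_{n+1}=0\}$) pick up an \emph{extra} factor $r/$? — rather, the correct mechanism is that $w$ vanishes on $\mathcal P$ with the same square-root rate as $U_0$, so $\nabla_x w$ behaves like $U_0/r$ rather than like $r^{-1/2}=U_0 r^{-1/2}\cdot r^{1/2}/\cdot$; concretely, write $w = U_0\cdot(w/U_0)$, use that $w/U_0 = a + O(|X|^\alpha)$ from \eqref{30} and that one can differentiate this (by the same compactness/rescaling applied to $w/U_0$ directly, or by the flat-model computation $\nabla_x U_0 = \frac{U_0}{2r}\nu$ from \eqref{dr1}), giving $\nabla_x w = (\nabla_x U_0)(a+O(|X|^\alpha)) + U_0\,\nabla_x(w/U_0) = \frac{U_0}{r}O(|X|^\alpha) + U_0\cdot O(|X|^{\alpha-1})$, and the second term needs the gradient of the quotient to be controlled — which is where one either invokes Theorem~\ref{S_0}'s proof at one order or, more cleanly, reruns the improvement-of-flatness iteration keeping track of $\nabla_x(w/U_0)$.

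I would organize this as: prove the $\Delta U_0$ estimate as a preliminary computation; then handle the anisotropic direction by the rescaling argument applied to $w/U_0$ itself rather than to $w$, since $w/U_0\in C^\alpha_{xr}(0)$ with the expansion already in hand, and the quantity $\nabla_x(w/U_0)$ is what the $C^\alpha_{xr}$-norm is designed to control up to the $U_0/r$ weight (compare the two conclusions \eqref{Su}–\eqref{SDu} of Theorem~\ref{Schauder}, which have exactly this $u/U_0$ versus $\nabla_x u/(U_0/r)$ dichotomy). The isotropic bound $|\nabla w|\le C|X|^\alpha r^{-1/2}$ follows from the crude interior estimate on $B_\rho(X_0)$ as in step (iv).

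The main obstacle I expect is the second (anisotropic) estimate $|\nabla_x w|\le C|X|^\alpha U_0/r$: the naive interior gradient estimate only gives the isotropic $r^{-1/2}$ bound, and extracting the improvement to $U_0/r$ near $\Gamma$ (where $U_0/r$ can be much smaller than $r^{-1/2}$, namely when $d<0$ so $U_0\ll r^{1/2}$) requires genuinely using that $w$ vanishes on the slit — i.e. a boundary, not interior, argument. The clean way is to note that $\nabla_x$ is tangential to the plane $\{x_{n+1}=0\}$ containing $\mathcal P$, so along the slit $\nabla_x w$ is literally the gradient of the zero function, and then a barrier/reflection argument (as in the barrier Lemma~\ref{bar} cited in the excerpt, or the estimate $\|\tilde v\|\le C\delta\tilde U_0$) upgrades the pointwise vanishing to the $U_0/r$ decay rate for the tangential gradient, much as in the proof of \eqref{SDu} in \cite{DS3}. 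Getting the bookkeeping on powers of $r$ versus $|X|$ right in the passage between the ball $B_\rho(X_0)$ and the origin is the routine-but-delicate part.
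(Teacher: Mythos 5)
Your step (i) contains a genuine gap that makes the whole approach collapse at the outset. You propose to show $|\Delta U_0|\le Cr^{\alpha-3/2}$ pointwise, but for a boundary $\Gamma$ that is merely $C^{1,\alpha}$, the signed distance $d$ is only $C^{1,\alpha}$, so $\Delta d$ (equivalently $\kappa$) is not a function defined a.e.\ — it is at best a distribution, and there is no pointwise bound of the kind you need. Consequently $\Delta r$ and $\Delta U_0$ are not controlled (or even defined) pointwise. This is precisely the issue the paper's Lemma~\ref{reg_1} addresses: only the \emph{regularized} object $\bar U_0$ satisfies $|\Delta\bar U_0|\le C\delta\,r^{\alpha-3/2}$, and the construction of $\bar U_0$ is a nontrivial mollification-and-gluing argument, not a computation with the raw $U_0$. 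Treating $w=u-aU_0$ as ``almost harmonic'' with a controlled right-hand side is therefore not available in the $C^{1,\alpha}$ setting. (You cite the Remark after Definition~\ref{d1}, but that remark is precisely the warning that going through $\Delta U_0$ costs one derivative of $\Gamma$ — which is exactly what you do not have here.)

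Even setting that aside, your step (iv) — interior Schauder estimates on balls away from $\mathcal P$ — only delivers the isotropic bound $|\nabla w|\le C|X|^\alpha r^{-1/2}$; you correctly flag that the anisotropic bound $|\nabla_x w|\le C|X|^\alpha U_0/r$ is the real content, but steps (v) and the closing paragraph never actually produce it: ``rerun the iteration keeping track of $\nabla_x(w/U_0)$'' and ``a barrier/reflection argument upgrades the pointwise vanishing'' are gestures, not arguments, and the second estimate is what the lemma is actually for (it is what is used in deriving \eqref{P}). The paper's proof avoids both difficulties at once by a different decomposition: at a point $X_0$ it replaces $U_0$ not by a mollification but by $U_0^*$, the \emph{exact} harmonic function for the tangent straight slit $L=\{x_n=0\}$ at the nearest point $\pi(X_0)\in\Gamma$. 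Then $u-aU_0^*$ is literally harmonic (no $\Delta U_0$ to estimate), the $C^{1,\alpha}$ flatness gives $|U_0-U_0^*|\le C\lambda^{1/2+\alpha}$ in a conical region of scale $\lambda=r(X_0)$, and one invokes the boundary gradient estimate in that cone (a harmonic function vanishing on the slit has tangential gradient bounded by its sup times $U_0^*/\lambda^{3/2}$) to get both the isotropic and anisotropic bounds at $X_0$ in one stroke; since $\nabla U_0=\nabla U_0^*$ at $X_0$ (where $d$ is differentiable) and $|a_{\pi(X_0)}-a|\le C|X_0|^\alpha$, one passes back to $U_0$ and to the fixed constant $a$. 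You should reorganize your proof around this comparison with the tangent flat model rather than around an estimate for $\Delta U_0$.
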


We prove the Schauder estimates and boundary Harnack in slit domains with $C^{1,\alpha}$ boundary using the same strategy as in the case of $C^{k+1,\alpha}$ domains with $k \geq 1$. However,
due to the lack of regularity of $r, d$ and $U_0$ the ``test" functions in the proof for $k \geq 1$ must be slightly modified. We achieve this by working with ``regularizations" of the functions $r$, $U_0$ that we denote by $\bar r$, $\bar U_0$. Their main properties  are given in the next lemma. Notice that $r$, $U_0$ are differentiable a.e.

\begin{lem} \label{reg_1}Let $\|\Gamma\|_{C^{1,\alpha}} \leq \delta.$ There exist smooth functions $\bar r, \bar U_0$ such that 
$$\left | \frac {\bar r}{r}-1 \right |  \leq C \delta \, r^{\alpha}, \quad \quad \left |\frac{ \bar U_0}{U_0}-1 \right | \leq C \delta \, r^{\alpha},$$
$$ |\nabla \bar r -\nabla r|   \leq C\delta \, r^{\alpha}, \quad \quad |\p_{x_{n+1}}\bar r -\p_{x_{n+1}}r| \le C \delta \, r^{\alpha-\frac 12} U_0 ,$$
$$ |\Delta \bar r - \frac{1}{ r}| \leq C \delta \, r^{\alpha -1}, \quad \quad  |\Delta \bar U_0| \leq C\delta \, r^{\alpha- \frac 3 2},$$
with $C$ universal.
\end{lem}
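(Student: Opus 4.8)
The statement to prove is Lemma \ref{reg_1}, which asserts the existence of smooth regularizations $\bar r$, $\bar U_0$ of the (only a.e.-differentiable) functions $r$ and $U_0$ in a slit domain with $C^{1,\alpha}$ boundary $\Gamma$. The natural strategy is a localized mollification adapted to the geometry: $r(X)$ behaves like a distance to the codimension-$2$ set $\Gamma$, so its regularity is limited precisely by the $C^{1,\alpha}$ regularity of $\Gamma$ (and by the singularity along $\Gamma$ itself). The plan is to flatten $\Gamma$ locally, mollify at scale comparable to $r$, and estimate the errors.

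\smallskip

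\emph{Step 1: straighten the slit.} Using the $C^{1,\alpha}$ graph function $g$, introduce a $C^{1,\alpha}$ change of variables $\Phi$ mapping a neighborhood of $0$ to a neighborhood of $0$ that sends $\Gamma$ to $\{x_n = x_{n+1}=0\}$ and $\mathcal P$ to $\{x_{n+1}=0,\ x_n\le 0\}$; because $\nabla_{x'}g(0)=0$ and $\|g\|_{C^{1,\alpha}}\le\delta$, the map $\Phi$ is $\delta$-close to the identity in $C^{1,\alpha}$, with $D\Phi(0)=I$. In these coordinates the model functions are the exact $r_0=\sqrt{x_n^2+x_{n+1}^2}$ and $U_0^{\mathrm{model}}=\frac{1}{\sqrt2}\sqrt{x_n+r_0}$, which are smooth away from the codimension-$2$ plane $\{x_n=x_{n+1}=0\}$ and homogeneous of degree $1$ and $1/2$ respectively. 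The actual $r, U_0$ differ from the pullbacks of these model functions by a multiplicative factor $1+O(\delta r^\alpha)$ and the gradients by $O(\delta r^{\alpha})$ (respectively $O(\delta r^{\alpha-1/2}U_0)$ for the $x_{n+1}$-derivative of $U_0$), by the $C^{1,\alpha}$ estimates on $\Phi$.

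\smallskip

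\emph{Step 2: mollify at the right scale.} Fix a standard mollifier $\eta$ and set, in the straightened coordinates,
\[
\bar r(X) = \int \eta_{t(X)}(X-Y)\, r(Y)\, dY, \qquad
\bar U_0(X) = \int \eta_{t(X)}(X-Y)\, U_0(Y)\, dY,
\]
with mollification radius $t(X) = \tau\, r(X)$ for a small universal $\tau$; equivalently one first mollifies at unit scale and rescales, or one uses a fixed partition of unity into dyadic annuli $\{2^{-j-1}\le r\le 2^{-j}\}$ and mollifies at scale $\tau 2^{-j}$ on each. Since $r$ is Lipschitz with constant $1$ and $U_0$ is comparable to $r^{1/2}$ with gradient comparable to $r^{-1/2}$, the standard mollification estimates give $|\bar r - r|\le C t \le C\tau r$ and, more importantly, using that $r$ and $U_0$ are $C^{1,\alpha}$ on the annulus $r\sim 2^{-j}$ \emph{with the $\delta$-small $C^{1,\alpha}$ seminorm inherited from $g$} after rescaling, we get the $r^\alpha$-gain: $|\bar r - r|\le C\delta\, r^{1+\alpha}/r = $ wait — more precisely, one must be careful that the first-order part of $r$ (the model $r_0$) is \emph{exactly} smooth, so mollification only sees the $O(\delta r^\alpha)$ perturbation; hence $|\bar r/r - 1|\le C\delta r^\alpha$ and $|\nabla\bar r - \nabla r|\le C\delta r^\alpha$, and similarly for $U_0$, with the weaker weight $r^{\alpha-1/2}U_0$ for $\partial_{x_{n+1}}\bar r$ reflecting the extra $r^{-1/2}$ in that derivative near the slit. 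The Laplacian estimates follow the same way: $\Delta r_0^{\mathrm{model}} = 1/r_0$ exactly (the codimension-$2$ distance in the plane satisfies $\Delta = 1/\rho$ in two dimensions, constant in the remaining directions) and $\Delta U_0^{\mathrm{model}}=0$ exactly, so $\Delta\bar r - 1/r$ and $\Delta\bar U_0$ only pick up the mollified perturbation, which at scale $t=\tau r$ costs two derivatives: $|\Delta\bar r - 1/r|\le C\delta\, r^{\alpha-1}$, $|\Delta\bar U_0|\le C\delta\, r^{\alpha-3/2}$.

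\smallskip

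\emph{Main obstacle.} The delicate point is \emph{separating} the exactly-smooth leading behavior (the homogeneous model functions $r_0$, $U_0^{\mathrm{model}}$) from the genuinely rough perturbation, and then tracking how many powers of $r$ each differentiation costs when mollifying at scale $\tau r$: a single derivative of a quantity of size $\delta r^{1+\alpha}$ mollified at scale $r$ costs $r^{-1}$ giving $\delta r^\alpha$, and the Laplacian costs $r^{-2}$ giving $\delta r^{\alpha-1}$, consistent with the claimed bounds — but this bookkeeping must be done on each dyadic annulus after rescaling to unit size, where the $C^{1,\alpha}$-seminorm of the perturbation (coming from $g$) is uniformly $\le C\delta$; reassembling across annuli requires checking that the mollifications on overlapping annuli agree up to the stated errors, which is where one uses that the mollification radius $\tau r$ varies slowly ($|\nabla(\tau r)|=\tau<1$). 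Once this scale-invariant picture is set up, all six estimates are routine consequences of the classical mollifier bounds $\|\nabla^m(\phi_t * w - w)\|_{L^\infty}\le C t^{\alpha-m}[\,w\,]_{C^\alpha}$ applied on each annulus to the perturbation $w = r - (\text{pullback of } r_0)$ and $w = U_0 - (\text{pullback of } U_0^{\mathrm{model}})$.
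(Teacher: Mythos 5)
There is a genuine gap: you mollify $r$ and $U_0$ (or their perturbations from model functions) in all $n+1$ variables at scale $\tau r$, but several of the claimed estimates force the regularized objects to vanish exactly on the slit $\mathcal P$, and a mollification that averages across $\{x_{n+1}=0\}$ cannot do that. Concretely, the estimate $\left|\bar U_0/U_0 - 1\right|\le C\delta r^\alpha$ must hold at interior points of $\mathcal P$ where $U_0=0$ but $r>0$; this requires $\bar U_0=0$ there. Since $U_0$ (and $U_0^{\mathrm{model}}$) is even in $x_{n+1}$ and nonnegative, $w=U_0-U_0^{\mathrm{model}}$ is even, and $\phi_{\tau r}*w$ evaluated on $\{x_{n+1}=0\}$ is an average of values at $|x_{n+1}|\lesssim \tau r \sim \tau|d|$ which is generically nonzero; hence $\bar U_0 = U_0^{\mathrm{model}} + \phi_{\tau r}*w \neq 0$ on $\mathcal P$ and the ratio estimate fails outright. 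The same problem kills $\left|\partial_{x_{n+1}}\bar r - \partial_{x_{n+1}}r\right|\le C\delta r^{\alpha-\frac12}U_0$: on $\mathcal P$ one has $\partial_{x_{n+1}}r=0$ and the bound is $0$, but a mollification in $x_{n+1}$ of a function behaving like $x_{n+1}/r$ near the slit does not produce a derivative vanishing on $\{x_{n+1}=0\}$. The paper's construction sidesteps this by mollifying only the $n$-dimensional signed distance $d$ (a function of $x$ alone, in $\R^n$) at dyadic scales to obtain $\bar d$, and then building $\bar r=\sqrt{\bar d^2+x_{n+1}^2}$ and $\bar U_0=\tfrac1{\sqrt2}\sqrt{\bar d+\bar r}$ algebraically, so that the exact $x_{n+1}$-dependence and the vanishing on $\{\bar d<0,\,x_{n+1}=0\}$ are preserved by fiat.

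A secondary issue is the $C^{1,\alpha}$ straightening map $\Phi$: it is not smooth, so pullbacks of $r_0$ and $U_0^{\mathrm{model}}$ by $\Phi$ are only $C^{1,\alpha}$, leaving the asserted smoothness of $\bar r$, $\bar U_0$ unsupported; and the Euclidean Laplacian is not preserved under a $C^{1,\alpha}$ change of variables, so the identities $\Delta r_0=1/r_0$ and $\Delta U_0^{\mathrm{model}}=0$ in straightened coordinates do not directly bound $\Delta\bar r - 1/r$ or $\Delta\bar U_0$ in the original coordinates. The paper avoids the change of variables entirely: it works with the intrinsic $d$ and uses the algebraic identities $r^2=d^2+x_{n+1}^2$ and $2U_0^2=d+r$, together with $\tfrac12\Delta r_\lambda^2 = d_\lambda\Delta d_\lambda+|\nabla d_\lambda|^2+1 = r_\lambda\Delta r_\lambda + |\nabla r_\lambda|^2$, to transfer $C^{1,\alpha}$ control on $d_\lambda$ into the Laplacian bounds for $\bar r$ and $\bar U_0$. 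Your bookkeeping of powers of $r$ under dyadic rescaling is fine in spirit and matches the paper's, but the object being mollified and the coordinate frame both need to change for the argument to go through.
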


The proof of Lemma \ref{reg_1} is postponed till Section \ref{T}. 

As usually, Theorem \ref{S_0} follows from the next improvement of flatness lemma. 
\

\begin{lem}\label{imp_0} Let
$$|\Delta u| \leq \delta r^{\alpha -3/2} \quad \text{in $B_1 \setminus \mathcal P$}, \quad u=0 \quad \text{on $\mathcal P$}, \quad \|\Gamma\|_{C^{1,\alpha}} \leq \delta.$$ Assume that there exists a constant $a$, $|a| \leq 1$ such that for some $\lambda>0$
$$\|u - a U_0\|_{L^\infty(B_\lambda)} \leq \lambda^{\alpha+1/2}.$$ Then there exists $\rho>0$ such that 
$$\|u - b U_0\|_{L^\infty(B_{\rho \lambda})} \leq (\rho \lambda)^{\alpha+1/2},$$ with $|a-b| \leq C \lambda^\alpha$, as long as $\delta$ is sufficiently small. 
\end{lem}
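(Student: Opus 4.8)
The plan is to mimic the compactness/improvement-of-flatness scheme already used for the $k\ge 1$ case, but now the ``test function'' will be $U_0$ itself (rescaled) rather than $U\,P$, and the single parameter $a$ plays the role of the approximating polynomial. First I would rescale: set $u - aU_0 =: \lambda^{\alpha+1/2}\,\tilde u(X/\lambda)$ on $B_1$, so that $|\tilde u|\le 1$ and, using $\Delta U_0=0$ in the flat case together with Lemma \ref{reg_1} controlling $\Delta \bar U_0$ (here one must be slightly careful because $\Delta U_0$ is a distribution on the slit — the honest statement is that $U_0$ is harmonic away from $\Gamma$, and $|\Delta u|\le\delta r^{\alpha-3/2}$ rescales to $|\Delta\tilde u|\le C\delta\, r^{\alpha-3/2}$ on $B_1$). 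Also the rescaled slit $\tilde\Gamma=\lambda^{-1}\Gamma$ still has $C^{1,\alpha}$ norm $\le\delta$.

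Next I would split $\tilde u=\tilde u_0+\tilde v$ where $\tilde u_0$ is harmonic in $B_1\setminus\tilde{\mathcal P}$ with $\tilde u_0=\tilde u$ on $\partial B_1\cup\tilde{\mathcal P}$, and $\tilde v$ carries the right-hand side with zero boundary data. The barrier estimate (Lemma \ref{bar}, i.e. (5.6) in \cite{DS3}) gives $\|\tilde v\|_{L^\infty(B_1)}\le C\delta\,\tilde U_0$. For $\tilde u_0$: it is harmonic, bounded by $1$, uniformly Hölder in $B_{1/2}$, and as $\delta\to 0$ the domain $B_1\setminus\tilde{\mathcal P}$ converges to the flat slit domain. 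By compactness $\tilde u_0$ is close in $B_{1/2}$ to a solution $w$ of the flat problem; applying Theorem \ref{SchauderL} with $m=0$, there is a constant $q$ with $|q|\le C$ such that $|w - q\,U_0|\le K|X| U_0$ on $B_{1/2}$, and since $\tilde U_0\to U_0$ (indeed here $\tilde U_0$ \emph{is} the rescaled $U_0$, and Lemma \ref{reg_1} says $\tilde U_0/U_0\to 1$) we conclude $\|\tilde u_0 - q\,U_0\|_{L^\infty(B_\rho)}\le C\rho^{3/2}\cdot\rho^{?}$ — more precisely $\le C(\rho^{3/2}+ o_\delta(1))\,U_0$-type bound. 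Combining with the $\tilde v$ estimate, $\|\tilde u - q\,U_0\|_{L^\infty(B_\rho)}\le (C\rho^{3/2} + C\delta)\,\rho^{?}$; choosing $\rho$ small so that $C\rho^{3/2}\le\tfrac12\rho^{1/2+\alpha}$ (possible since $3/2>1/2+\alpha$), and then $\delta$ small depending on $\rho$, gives $\|\tilde u-qU_0\|_{L^\infty(B_\rho)}\le\tfrac12\rho^{\alpha+1/2}$. Rescaling back, $\|u - (a+\lambda^\alpha q)U_0\|_{L^\infty(B_{\rho\lambda})}\le\tfrac12(\rho\lambda)^{\alpha+1/2}$, so we set $b:=a+\lambda^\alpha q$ and $|a-b|=\lambda^\alpha|q|\le C\lambda^\alpha$. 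Unlike the $k\ge1$ case there is no need to perturb $q$ into an ``approximating'' value, since $aU_0$ automatically satisfies the required structural relation (the analogue of system \eqref{A} is vacuous for a single constant against $R\equiv 0$).

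The main obstacle, and the place where the $C^{1,\alpha}$ case genuinely differs from $k\ge 1$, is controlling the error terms coming from the non-smoothness of $U_0$, $r$, $d$. In the smooth case one computes $\Delta(U\,P)$ exactly using that $U$ is harmonic; here $U_0$ is only harmonic off $\Gamma$ and is merely $C^{1,\alpha}_{xr}$-regular, so when we want to say ``$\tilde u_0$ is well approximated by a multiple of $U_0$'' we must either run the compactness argument purely at the level of $\tilde u_0$ (which is legitimately harmonic and bounded, so standard) and only invoke $U_0$-smoothness through Lemma \ref{reg_1} at the very end, or else replace $U_0$ by its regularization $\bar U_0$ throughout and absorb the discrepancy $|\bar U_0/U_0 - 1|\le C\delta r^\alpha$ into the error. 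I would take the former route: keep $\tilde u_0$ harmonic, use Theorem \ref{SchauderL} and the convergence of the domains, and only at the last step compare $q\tilde U_0$ with $qU_0$ using $|\tilde U_0/U_0-1|\le C\delta r^\alpha\le C\delta$ on $B_1$. The barrier lemma for $\tilde v$ and this final comparison are the two quantitative inputs that must be checked carefully; everything else is the same compactness skeleton as Lemma \ref{Imp}.
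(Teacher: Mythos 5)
Your skeleton (rescale, split into a harmonic part plus a barrier-controlled part, compactness to the flat problem, then rescale back) is the paper's skeleton. But the route you explicitly choose does not close, and the point at which it fails is precisely the one the $C^{1,\alpha}$ hypothesis was introduced to handle. You subtract $aU_0$ \emph{before} rescaling and then want $|\Delta\tilde u|\le C\delta\,r^{\alpha-3/2}$ so that the barrier (Lemma \ref{bar}) applies to $\tilde v$. That bound requires a bound on $a\,\Delta U_0$. Your parenthetical claim that ``$U_0$ is harmonic away from $\Gamma$'' is false once $\Gamma$ is curved: $U_0=\tfrac{1}{\sqrt 2}\sqrt{d+r}$ involves the signed distance $d$, and $\Delta_x U_0=\nabla_x\cdot\bigl(\tfrac{U_0}{2r}\nabla d\bigr)$ picks up $\Delta d=-\kappa$ and $D^2 d$; for a $C^{1,\alpha}$ boundary these second derivatives of $d$ do not even exist pointwise, so $\Delta U_0$ is not a classical object, let alone one of size $\delta\,r^{\alpha-3/2}$. (Even for a smooth curved $\Gamma$, $\Delta U_0\sim\kappa\,U_0/r$, which is of size $r^{-3/2}$ with no small factor.) Consequently $\Delta\tilde v=\Delta\tilde u$ is not under control and the barrier step collapses; the ``former route'' you select does not repair this, because it only defers the comparison of $\tilde U_0$ with the flat $U_0$ to the end, while the problem is already in the PDE for $\tilde v$.

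The necessary fix — and the paper's actual move — is to replace $U_0$ by its smooth regularization $\bar U_0$ from Lemma \ref{reg_1} \emph{before} rescaling: from $\bigl|\bar U_0/U_0-1\bigr|\le C\delta r^\alpha$ one passes from $\|u-aU_0\|_{L^\infty(B_\lambda)}\le\lambda^{\alpha+1/2}$ to $\|u-a\bar U_0\|_{L^\infty(B_\lambda)}\le 2\lambda^{\alpha+1/2}$, and then, using $|\Delta\bar U_0|\le C\delta\,r^{\alpha-3/2}$, the rescaled $\tilde u$ defined by $u-a\bar U_0=2\lambda^{\alpha+1/2}\tilde u(X/\lambda)$ genuinely satisfies $|\Delta\tilde u|\le C\delta\,r^{\alpha-3/2}$. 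Then your decomposition, barrier estimate, compactness, and Theorem \ref{SchauderL} with $m=0$ go through as you outlined, yielding $\|\tilde u-b\tilde U_0\|_{L^\infty(B_\rho)}\le C\rho^{3/2}+o_\delta(1)\le\tfrac14\rho^{\alpha+1/2}$; one last application of Lemma \ref{reg_1} converts $a\bar U_0+2b\lambda^\alpha U_0$ back into $(a+2b\lambda^\alpha)U_0$ with error $\le C\delta\lambda^{\alpha+1/2}\le\tfrac12(\rho\lambda)^{\alpha+1/2}$. So the regularization is not an optional alternative to ``absorb a discrepancy at the end''; it is what makes the Laplacian of the test function, and hence the barrier step, meaningful at all.
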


\begin{proof}From Lemma \ref{reg_1} we can replace $U_0$ by $\bar U_0$ and assume that 
$$|u - a \bar U_0| \leq 2 \lambda^{\alpha+1/2} \quad \text{in $B_\lambda$.}$$

Set $$u - a \bar U_0 = 2 \lambda^{\alpha+1/2} \tilde u(\frac{X}{\lambda}).$$ Then, using the bound for $\triangle  \bar U_0$, we obtain that
\begin{equation*} |\tilde u| \leq 1, \quad |\Delta \tilde u | \leq  C \delta r^{\alpha-3/2} \quad \text{in $B_1.$}
\end{equation*}

We now write,
$$\tilde u = \tilde u_1 + \tilde u_2,$$ with
$$\Delta \tilde u_1= \Delta \tilde u \quad \text{in $B_1 \setminus \mathcal P$}, \quad \tilde u_1 = 0 \quad \text{on $\p B_1 \cup \tilde{\mathcal P}$}$$
and
$$\Delta \tilde u_2 = 0  \quad \text{in $B_1 \setminus \mathcal P$}, \quad \tilde u_2 =\tilde u \quad \text{on $\p B_1 \cup \tilde{\mathcal P}.$}$$

By Lemma \ref{bar} in Section \ref{T} we have $\|\tilde u_1\|_{L^\infty} \le C \delta \tilde U_0$ hence $\tilde u_1$ converges to 0 uniformly as $\delta \to 0.$

To estimate $\tilde u_2$  we argue by compactness, as in the case $k \geq 1$. If $\delta$ is sufficiently small universal, $\tilde u_2$ can be approximated in $B_{1/2}$ by a solution of the Laplace problem with $\Gamma=\{x_n=0\}$. Thus by Theorem \ref{SchauderL}, we deduce that 
\begin{equation*}
\|\tilde u_2 - b \tilde U_0 \|_{L^\infty(B_\rho)} \leq C \rho^{1+\frac 1 2 }, 
\end{equation*} for some constant $b$, $|b| \leq C.$
Thus,
$$\|\tilde u - b \tilde U_0 \|_{L^\infty(B_\rho)} \leq C \rho^{1+\frac 1 2 } + o(\delta) \leq \frac 1 4 \rho^{\frac 1 2 +\alpha},$$
provided we choose first $\rho$ then $\delta$ sufficiently small.

Writing this inequality in terms of the original function $u$ we obtain (for $\delta$ small enough),
$$|u - a \bar U_0 - 2 b \lambda^\alpha  U_0| \leq \frac{1}{2} (\lambda \rho)^{\alpha +1/2} \quad \text{in $B_{\rho \lambda}$}.$$
Then, by Lemma \ref{reg_1}, we obtain
$$|u - (a+ 2 b  \lambda^\alpha) U_0| \leq (\lambda \rho)^{\alpha +1/2}, \quad\text{in $B_{\rho\lambda}$} $$ as desired.
\end{proof}

\smallskip

For the remaining of this section we prove our main Theorem \ref{SchauderO}, for $k=0$. For clarity of exposition we write below the statements of Proposition \ref{PSchauder} and Lemma \ref{Imp} leading to it, in the case $k=0$. 

\begin{prop}\label{PSchauder_0} Let $\Gamma \in C^{1, \alpha}$, $\|\Gamma\|_{C^{1,\alpha}} \le 1$, and $U$ be as in \eqref{FBU}.
Assume that $u\in C(B_1)$ is even and vanishes on $\mathcal P$, $\|u\|_{L^\infty} \le 1$, and
\begin{equation*}
\Delta u (X)= q \frac{U_0}{r} +F(X) \quad \quad \quad \quad \mbox{in $B_1 \setminus \mathcal P$},
\end{equation*}
with
$$|F(X)| \le r^{-\frac 12}|X|^{\alpha} \quad \quad \mbox{and $|q|\leq 1$.}$$
 There exists a polynomial $P(x,r)$ of degree $1$ with coefficients bounded by $C$ such that
 $$\left|\frac{u}{U}-P\right| \le C |X|^{1+\alpha,}$$  with $C$ depending on $\alpha$, $n$.
\end{prop}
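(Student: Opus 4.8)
The plan is to mimic the iteration scheme that proves Proposition \ref{PSchauder} for $k\ge 1$, but carefully tracking the loss of regularity of $r$, $d$, $U_0$ that occurs when $\Gamma$ is only $C^{1,\alpha}$. The statement to prove, Proposition \ref{PSchauder_0}, is the $k=0$ case: given $\Delta u = q\,U_0/r + F$ with $|F|\le r^{-1/2}|X|^\alpha$ and $|q|\le 1$, we want a degree-one polynomial $P(x,r)$ with $|u/U - P|\le C|X|^{1+\alpha}$. Equivalently, writing $P = a_0 + a_1\cdot x + a_r r$ with bounded coefficients, we want $|u - UP|\le C\,U_0|X|^{1+\alpha}$ near $0$. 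As in the earlier sections I would first reduce by an initial dilation and a normalization of $U$ to the flat regime $\|\Gamma\|_{C^{1,\alpha}}\le\delta$, $|q|\le\delta$, $|F|\le\delta r^{-1/2}|X|^\alpha$, and $U = U_0(1+\delta Q_0 + \delta O(|X|^\alpha))$ with $Q_0$ degree one and vanishing constant term, $\delta$ to be chosen small and universal.

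The core is an improvement-of-flatness lemma: if $P$ with $\|P\|\le 1$ is "approximating for $u/U$ at $0$" (meaning $\Delta(UP)$ has the right leading coefficient matching $q$) and $\|u - UP\|_{L^\infty(B_\lambda)}\le\lambda^{3/2+\alpha}$, then there is an approximating $\bar P$ with $\|u - U\bar P\|_{L^\infty(B_{\rho\lambda})}\le(\rho\lambda)^{3/2+\alpha}$ and $\|\bar P - P\|\le C\lambda^{1+\alpha}$. The proof rescales $u - UP =: \lambda^{3/2+\alpha}\tilde u(X/\lambda)$; using that $P$ is approximating, $\Delta\tilde u$ picks up $\tilde F$ bounded by $C\delta r^{-1/2}$, but here is where I must be careful — computing $\Delta(UP)$ requires understanding $\Delta(x^\mu r^m U)$ for $|\mu|+m\le 1$. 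The new terms are $\Delta(UP) = U\Delta P + 2\nabla P\cdot\nabla U$; the dangerous piece is $\Delta(rU)$, which involves $\Delta r = 1/r - \kappa d/r$ — but with $\Gamma\in C^{1,\alpha}$ only, $\kappa d$ is merely $O(|X|^\alpha)$-type and $\kappa$ itself is not a function, so I cannot Taylor-expand $\kappa$ as in \eqref{low}. This is exactly why the excerpt introduced the regularizations $\bar r$, $\bar U_0$ in Lemma \ref{reg_1}: I would build the test functions out of $\bar r$ and $\bar U_0$ instead of $r$ and $U_0$, using $|\Delta\bar r - 1/r|\le C\delta r^{\alpha-1}$ and $|\Delta\bar U_0|\le C\delta r^{\alpha-3/2}$ so that all the error terms generated are of the admissible size $\delta r^{-1/2}|X|^\alpha$ after rescaling. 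Similarly the expansion \eqref{form}--\eqref{radial} for $\nabla U$ and $\partial_r U$ must be replaced by its $k=0$ analogue, where $U = U_0(1+\delta Q_0 + \ldots)$ gives $\nabla_x U = \frac{U_0}{2r}\nu + (\text{lower order }\delta\text{-terms})$ with the lower order terms again of size $\delta r^{-1/2}|X|^\alpha$ in $\Delta$-sense; Lemma \ref{s_0} is the analogue of Lemma \ref{pr} that makes this rigorous for the radial derivative. With these replacements, the decomposition $\tilde u = \tilde u_1 + \tilde v$ (harmonic extension plus inhomogeneous correction) goes through: $\|\tilde v\|_\infty\le C\delta\tilde U_0$ by the barrier Lemma \ref{bar}, and the harmonic part $\tilde u_1$ (really $\tilde u_0$ in the notation of Lemma \ref{Imp}) is, by compactness as $\delta\to 0$, close in $B_{1/2}$ to a harmonic function on the flat slit $\{x_n\le 0\}$, hence by Theorem \ref{SchauderL} approximable by $\tilde U Q$ with $Q$ of degree $2$ and $\|Q\|\le C$, giving $\|\tilde u_0 - \tilde U Q\|_{L^\infty(B_\rho)}\le C\rho^{5/2}$.

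Combining, $\|\tilde u - \tilde U Q\|_{L^\infty(B_\rho)}\le C\rho^{5/2} + C\delta \le \frac12\rho^{3/2+\alpha}$ after choosing $\rho$ small then $\delta$ small. Translating back, $P + \lambda^{1+\alpha}Q(X/\lambda)$ almost works but is not approximating, so as in Lemma \ref{Imp} I perturb $Q$ to $\bar Q$ solving the rescaled linear system with right-hand side $0$ (since the new leading-order contribution must still match $q$); the difference $Q - \bar Q$ solves the same uniquely-solvable triangular system with right-hand side $O(\delta)$, so $\|Q - \bar Q\|\le C\delta$, which is absorbed. Setting $\bar P := P + \lambda^{1+\alpha}\bar Q(X/\lambda)$ finishes the lemma. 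Then one iterates over $\lambda = \lambda_0\rho^m$: after multiplying $u$ by a small constant the hypotheses hold at some small $\lambda_0$ with an initial approximating $P_{\lambda_0}$, $\|P_{\lambda_0}\|\le 1/2$, and the limit polynomial $P_0$ satisfies $|u - UP_0|\le C|X|^{3/2+\alpha}$ in $B_1$; the classical boundary Harnack ($|\tilde u_0|\le C\tilde U_0$) upgrades the right-hand side to $C\,U_0|X|^{1+\alpha}$, which is equivalent to $|u/U - P_0|\le C|X|^{1+\alpha}$, proving the proposition.

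The main obstacle I anticipate is precisely the bookkeeping in the computation of $\Delta(U\bar P)$ with the regularized functions: one must verify that every error term — those coming from $\Delta\bar U_0$, from $\Delta\bar r - 1/r$, from $\partial_{x_{n+1}}\bar r - \partial_{x_{n+1}}r$, from the $\delta Q_0$ perturbation in $U$, and from the cross terms $\nabla\bar r\cdot\nabla U$ — is bounded by $C\delta r^{-1/2}|X|^\alpha$, so that after the rescaling these all contribute to $\tilde F$ with the correct $C\delta r^{-1/2}$ bound and vanish as $\delta\to 0$. The delicate point is the interaction of the $r^{-1/2}$ and $r^{\alpha-1}$ weights near $\Gamma$: one needs $r^{\alpha}\cdot r^{-1}\cdot |X| \lesssim r^{-1/2}|X|^\alpha$ type inequalities, valid because $|X|\ge r$, and the fact that multiplication by $\bar U_0 \sim r^{1/2}$ converts the $r^{-3/2}$-type bounds on $\Delta\bar U_0$ into $r^{-1}$-type bounds — exactly matched to the $U_0/r$ scaling on the right side of \eqref{FB}. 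Once this is organized the rest is a faithful transcription of the $k\ge 1$ argument.
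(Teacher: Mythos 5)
Your proposal follows essentially the same strategy as the paper: after an initial dilation, set up an improvement-of-flatness lemma for approximating polynomials built from the regularized $\bar r$, $\bar U_0$ of Lemma \ref{reg_1} (so that the loss of regularity in $\kappa$ is avoided), decompose the rescaled error into a harmonic part handled by compactness and Theorem \ref{SchauderL} plus an inhomogeneous part killed by the barrier Lemma \ref{bar}, correct the new polynomial to an approximating one, and iterate. One small slip: in the compactness step the polynomial $Q$ produced by Theorem \ref{SchauderL} should have degree $k+1=1$, not $2$ (the error bound $\rho^{5/2}$ you wrote is the correct one for degree $1$); and the paper makes explicit that the final polynomial is returned in the $(x,r)$ variables by invoking $|P(x,r)-P(x,\bar r)|\le C\delta\|P\| r^{1+\alpha}$ from Lemma \ref{reg_1}, a conversion you left implicit but which is immediate.
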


As usually, after performing an initial dilation we may assume that:
\begin{equation}\label{flat1_0}\|g\|_{C^{1,\alpha}} \leq \delta, \quad |q| \leq \delta, \quad |F| \leq \delta r^{-1/2} |X|^{\alpha}.\end{equation}
Proposition \ref{PSchauder_0} will follow as in the case $k \ge 1$, after we extend the definition of approximating polynomial to this case i.e. now $P$ is a polynomial in $(x, \bar r),$ rather than $(x,r)$. 

Let $P(x,\bar r)$ be a polynomial of degree one in $x$ and $\bar r$,
$$P(x, \bar r) = a_0 + \sum_{i=1}^n a_i x_i + a_{n+1} \bar r.$$

We claim that
\begin{equation} \label{P} \Delta (U P) = \frac{U_0}{r} [a_n+2 a_{n+1} + O(\delta |X|^\alpha)].
\end{equation}

\

\begin{defn}We say that $P$ is approximating for $u/U$ at $0$ if 
$$ a_n+2 a_{n+1} = q. $$\end{defn}

The proof of the claim is postponed till later. Now, with this definition the proof of Proposition \ref{PSchauder_0} is a consequence of the next lemma whose proof is identical to the case $k \geq 1$.

\begin{lem}\label{Imp_0} There exist universal constants $\rho$, $\delta$ depending only on $\alpha$ and $n$, such that if $P_0(x,\bar r)$ with $\|P_0\| \le 1$ is an approximating polynomial for $u/U$ in $B_\lambda$, that is $P$ is approximating for $u/U$ at 0 and $$|u-U P|_{L^\infty(B_\lambda)} \leq \lambda^{3/2+\alpha},$$ for some $\lambda>0$, then there exists an approximating polynomial $ P_1(x,\bar r)$ for $u/U$ at $0$ such that in $B_{\rho \lambda}$:
$$|u -U P_1| _{L^\infty(B_{\rho\lambda})} \leq (\rho\lambda)^{3/2+\alpha}, \quad \quad \| P_1- P_0\|_{L^\infty(B_\lambda)} \le C \lambda^{1+\alpha}.$$
\end{lem}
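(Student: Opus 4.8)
The plan is to mirror the proof of Lemma \ref{Imp} in the case $k\ge 1$, substituting $\bar r$, $\bar U_0$ for $r$, $U_0$ wherever regularity of the weight is needed. First I would set $u-UP=:\lambda^{3/2+\alpha}\tilde u(X/\lambda)$ and rescale $\Gamma$, $\mathcal P$, $U_0$, $U$ from $B_\lambda$ to $B_1$ as in the earlier argument, writing $\tilde U_0$, $\tilde U$ for the rescalings. Since $P$ is approximating, the claim \eqref{P} gives $\Delta(UP)=\frac{U_0}{r}(q+O(\delta|X|^\alpha))$, so after rescaling $\tilde u$ satisfies $|\tilde u|\le 1$ and $|\Delta\tilde u|\le C\delta\, r^{-1/2}$ in $B_1$ (the error from $\Delta(UP)$, the given bound on $F$, and the bounds of Lemma \ref{reg_1} all contribute terms of this size after the $k=0$ scaling). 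Then decompose $\tilde u=\tilde u_1+\tilde v$ with $\tilde u_1$ harmonic in $B_1\setminus\tilde{\mathcal P}$ matching $\tilde u$ on $\p B_1\cup\tilde{\mathcal P}$, and $\tilde v$ vanishing on $\p B_1\cup\tilde{\mathcal P}$ with $|\Delta\tilde v|\le C\delta r^{-1/2}$.

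By Lemma \ref{bar} the remainder obeys $\|\tilde v\|_{L^\infty(B_1)}\le C\delta\,\tilde U_0$, so it converges to $0$ uniformly (in the weighted sense) as $\delta\to 0$. For $\tilde u_1$ I would argue by compactness exactly as before: $\tilde u_1$ is harmonic in $B_1\setminus\tilde{\mathcal P}$, $|\tilde u_1|\le 1$, uniformly Hölder in $B_{1/2}$, and $\tilde\Gamma\to\{x_n=0\}$ in $C^{1,\alpha}$ as $\delta\to 0$; moreover $\tilde U\to\tilde U_0$ uniformly. Applying Theorem \ref{SchauderL} with $m=1$ gives a polynomial $Q(x,r)$ of degree $1$ with $U_0Q$ harmonic in the flat slit domain and $\|\tilde u_1-\tilde U Q\|_{L^\infty(B_\rho)}\le C\rho^{5/2}$, with $\|Q\|\le C$. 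Harmonicity of $U_0Q$ in the flat case forces the analogue of \eqref{fc}, i.e. the top-degree relation on the coefficients of $Q$. Combining with the bound on $\tilde v$ we get $\|\tilde u-\tilde U Q\|_{L^\infty(B_\rho)}\le C\rho^{5/2}+C\delta\le\tfrac12\rho^{3/2+\alpha}$ after choosing $\rho$, then $\delta$, small and universal. Rescaling back yields $|u-U(P+\lambda^{1+\alpha}Q(X/\lambda))|\le\tfrac12(\rho\lambda)^{3/2+\alpha}$ in $B_{\rho\lambda}$.

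The final step is to correct $P+\lambda^{1+\alpha}Q(X/\lambda)$ so that it is genuinely approximating for $u/U$ at $0$. Since $U_0Q$ is harmonic in the flat slit, the coefficients of $Q=q_0+q_ix_i+q_{n+1}r$ satisfy $q_n+2q_{n+1}=0$, whereas the rescaled approximating condition for $R\equiv 0$ requires $q_n+2q_{n+1}+(\text{rescaled }c\text{-terms})=0$ with rescaled coefficients bounded by $C\delta$ (the $\bar c^{\mu m}_{\sigma l}$ of the $k\ge 1$ proof, here with $|\mu|+m\le 1$). Solving the perturbed one-equation system for $\bar Q$ with $\bar q_{\mu 0}=q_{\mu 0}$ fixed, exactly as in Lemma \ref{Imp}, gives $\|\bar Q-Q\|\le C\delta$, hence setting $P_1:=P+\lambda^{1+\alpha}\bar Q(X/\lambda)$ keeps the $B_{\rho\lambda}$ bound (at the cost of $\tfrac12\to 1$) and yields $\|P_1-P_0\|_{L^\infty(B_\lambda)}\le C\lambda^{1+\alpha}$. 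The only genuinely new point compared to $k\ge 1$ — and the step I expect to require the most care — is verifying the claim \eqref{P}, i.e. that $\Delta(U\bar r)=\frac{U_0}{r}(2+O(\delta|X|^\alpha))$ and $\Delta(Ux_i)=\frac{U_0}{r}(\delta_n^i+O(\delta|X|^\alpha))$: this needs the expansion $\nabla_x U=\frac{U_0}{r}(\tfrac12\nu+O(|X|^\alpha))$ and $\p_r U=\frac{U_0}{r}(\tfrac12+O(|X|^\alpha))$ from \eqref{form}–\eqref{radial} together with the $\Delta\bar r$, $\nabla\bar r$ estimates of Lemma \ref{reg_1}, since here $U_0$ itself is not $C^{1,\alpha}$ and one cannot differentiate $U_0$ directly.
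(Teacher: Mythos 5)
Your proposal is correct and takes essentially the same route as the paper: the paper itself declares the proof of Lemma \ref{Imp_0} ``identical to the case $k\geq 1$'' and then separately verifies the claim \eqref{P}, which is exactly the structure you lay out. Two small points where your account is slightly off, neither fatal. First, for $k=0$ the expansion of $\nabla_x U$ and $\p_r U$ cannot be quoted from \eqref{form}--\eqref{radial}, since those rest on Theorem \ref{Schauder} and Lemma \ref{pr}, both stated for $k\ge 1$; the paper instead first proves the $C^{1,\alpha}$ Schauder estimate Theorem \ref{S_0} and its formally differentiated version Lemma \ref{s_0}, and these supply the needed bounds $U=U_0(1+O(\delta|X|^\alpha))$, $\nabla_x U=\nabla_x U_0+O(\delta\frac{U_0}{r}|X|^\alpha)$, $\p_{x_{n+1}}U=\p_{x_{n+1}}U_0+O(\delta r^{-1/2}|X|^\alpha)$ that feed into \eqref{P}. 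So Theorem \ref{S_0} is a genuine prerequisite and should be cited in place of \eqref{form}--\eqref{radial}. Second, for $k=0$ the approximating condition is simply $a_n+2a_{n+1}=q$ with no lower-order $c$-terms, because the only degree available for $|\sigma|+l\le k=0$ is zero and the curvature corrections that produce $c_{\sigma l}^{\mu m}$ in the $k\ge1$ computation are absorbed entirely into the $O(\delta|X|^\alpha)$ remainder of \eqref{P}; hence the flat-case polynomial $Q$ with $q_n+2q_{n+1}=0$ is already approximating for $R\equiv0$ and the correction $Q\to\bar Q$ you invoke is vacuous ($\bar Q=Q$). Neither observation changes the conclusion, but the second one means the final step of the $k\ge1$ proof simplifies rather than merely carries over.
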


Notice that in view of Lemma \ref{reg_1} $$|P(x,r)-P(x,\bar r)| \le C \delta \|P\| r^{1+\alpha},$$
and Proposition \ref{PSchauder_0} follows as before.

It remains to prove formula \eqref{P}. We compute that
\begin{equation}\label{UP} \Delta(UP)= a_{n+1} U \Delta \bar r + 2 a_i U_i + 2 a_{n+1} \nabla {\bar r} \cdot \nabla U. 
\end{equation}
Now we use Theorem \ref{S_0}, Lemma \ref{s_0} and estimate $U$, $\nabla U$ in terms of $U_0$, $\nabla U_0$ together with the estimates for $\nabla \bar r$, $\triangle \bar r$ from Lemma \ref{reg_1}. From Theorem \ref{S_0}, Lemma \ref{s_0}, we see that after multiplication by a constant and a dilation we may suppose that the function $U$ satisfies:
$$ U= U_0(1+ O(\delta |X|^\alpha)),$$ 
$$ \nabla_x U = \nabla_x{U_0} + O(\delta \frac{U_0}{r}|X|^\alpha),  \quad \quad \quad \p_{x_ {n+1}} U = \p_{x_{n+1}} U_0 + O(\delta r^{- \frac12}|X|^\alpha).$$
Since
$$\triangle \bar r=\frac 1 r + O(\delta r^{\alpha -1}),$$
$$\nabla_x \bar r=\nabla_x r + O(\delta r^\alpha), \quad \p_{x_ {n+1}} \bar r = \p_{x_{n+1}} r + O(\delta U_0 r^{\alpha- \frac 1 2}),$$
and also
$$\nabla_x U_0= \frac{U_0}{2r} \, \nabla_x d, \quad \quad \nabla_x d= e_n + O(\delta |X|^\alpha),$$
$$ \quad |\nabla U_0| \le C r^{- \frac 1 2}, \quad \quad \nabla r \cdot \nabla U_0=\frac{U_0}{2r}, \quad \quad |\p_{x_{n+1}} r| \le C U_0 r^{- \frac 1 2}, $$
we easily obtain \eqref{P} from \eqref{UP}.

\section{Proof of some technical lemmas}\label{T}

In this section we collect the proofs of several technical lemmas. We start with the approximation Lemma \ref{reg_1}. For the reader's convenience we state it again.

\begin{lem} \label{reg_0}Let $\|\Gamma\|_{C^{1,\alpha}} \leq 1.$ There exist smooth functions $\bar r, \bar U_0$ such that 
$$\left | \frac {\bar r}{r}-1 \right |  \leq C r^{\alpha}, \quad \quad \left |\frac{ \bar U_0}{U_0}-1 \right | \leq C  r^{\alpha},$$
$$ |\nabla \bar r -\nabla r|   \leq C r^{\alpha}, \quad \quad |\p_{x_{n+1}}\bar r -\p_{x_{n+1}}r| \le C  r^{\alpha-\frac 12} U_0 ,$$
$$ |\Delta \bar r - \frac{1}{ r}| \leq C  r^{\alpha -1}, \quad \quad  |\Delta \bar U_0| \leq C r^{\alpha-3/2},$$
with $C$ universal. Moreover if we assume $\|\Gamma\|_{C^{1,\alpha}} \leq \delta,$ small, then the constant $C$ above is replaced by $ C \delta$.
\end{lem}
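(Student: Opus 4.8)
The plan is to construct $\bar r$ and $\bar U_0$ by a standard mollification-at-scale-$r$ procedure, exploiting the fact that $\Gamma$ is $C^{1,\alpha}$ and that $r$, $U_0$ are, away from $\Gamma$, smooth functions whose derivatives blow up in a controlled, scale-invariant way. Concretely, fix a smooth cutoff $\varphi$ supported in $B_{1/2}$ with $\int \varphi = 1$, and for a point $X$ at distance $r = r(X)$ from $\Gamma$ set
\[
\bar r(X) := \int \varphi(Y)\, r\!\left(X + \tfrac{r(X)}{2} Y\right)\, dY,
\]
and similarly for $\bar U_0$. One should check this is well defined and smooth (the map $X \mapsto r(X)$ is Lipschitz with constant $1$, so the mollification radius $r(X)/2$ is comparable to $r$ on the support of the integrand, and the formula can be interpreted as a genuine convolution after a local change of variables / partition of unity; alternatively define $\bar r$ on dyadic annular regions $\{2^{-j-1} < r < 2^{-j}\}$ and patch). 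The point is that on the ball $B_{r/2}(X)$ the function $r$ is smooth with $|\nabla r| \le 1$, $|D^2 r| \le C/r$, $|D^3 r| \le C/r^2$, and — crucially — the $C^{1,\alpha}$ regularity of $\Gamma$ gives that $\nabla r$ has oscillation $\le C r^\alpha$ on that ball.

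The first block of estimates then follows from Taylor expansion: $\bar r - r = \int \varphi(Y)\big(r(X+\tfrac r2 Y) - r(X)\big)dY$, and since $\nabla r(X)\cdot Y$ integrates to zero against $\varphi$ (choosing $\varphi$ even, or just absorbing the first-order term), what remains is controlled by the oscillation of $\nabla r$, giving $|\bar r - r| \le C r \cdot r^\alpha = C r^{1+\alpha}$, i.e. $|\bar r / r - 1| \le C r^\alpha$; differentiating under the integral, $\nabla \bar r(X) = \int \varphi(Y)\nabla r(X + \tfrac r2 Y)(1 + \tfrac12 \nabla r(X)\cdot Y)\,dY + \dots$ and the same oscillation bound yields $|\nabla \bar r - \nabla r| \le C r^\alpha$; the tangential/$x_{n+1}$ refinement $|\partial_{x_{n+1}}\bar r - \partial_{x_{n+1}} r| \le C r^{\alpha - 1/2} U_0$ uses that $\partial_{x_{n+1}} r = x_{n+1}/r$ and $|x_{n+1}| \le C r^{1/2} U_0$ near $\Gamma$ (since $r^2 = d^2 + x_{n+1}^2$ and $U_0^2 = (d+r)/2 \gtrsim r$ when $d \ge 0$, while for $d<0$ one has $|x_{n+1}| \ge |d|$ and $U_0^2 = \tfrac{x_{n+1}^2}{2(r-d)} \gtrsim x_{n+1}^2/r$), so the generic $r^\alpha$ error upgrades by a factor $r^{-1/2}U_0$. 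The second derivative bounds are the delicate ones: $|\Delta \bar r - 1/r| \le C r^{\alpha-1}$ should come from writing $\Delta \bar r(X) = \int \varphi(Y)\, (\Delta r)(X + \tfrac r2 Y)\,dY + (\text{corrections from differentiating the variable radius})$, using the identity \eqref{rm} (with $m=1$) that $\Delta r = -\kappa d / r$... wait, \eqref{rm} gives $\Delta r^m = m r^{m-2}(m - \kappa d)$, so $\Delta r = r^{-1}(1 - \kappa d)$, i.e. $\Delta r - 1/r = -\kappa d/r$; but $\kappa$ is only $C^{-1+\alpha}$ (it involves two derivatives of a $C^{1,\alpha}$ graph) so $r \Delta r$ is merely bounded, not continuous — hence the need to mollify. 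After mollification the error $|\Delta \bar r - 1/r|$ is bounded by $C r^{-1}$ times the oscillation of $r\Delta r = 1 - \kappa d$ on $B_{r/2}(X)$, plus the corrections from differentiating the radius $r(X)/2$ twice, and one checks each piece is $O(r^{\alpha - 1})$.

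For $\bar U_0$ the argument is the same, with the key input being that $U_0 = \tfrac{1}{\sqrt2}\sqrt{d+r}$ satisfies $|\Delta U_0| \le C r^{-3/2}$ in the a.e. sense (compute $\Delta U_0$ via \eqref{rm}-type identities and \eqref{dr}: it is a sum of terms $r^{-3/2}$ times bounded quantities plus $r^{-1/2}$ times $\kappa$, all $\le C r^{-3/2}$) and that $U_0$, $\nabla U_0$ have the appropriate scale-invariant oscillation controlled by $r^\alpha$; then $|\Delta \bar U_0|$ is bounded by the mollified $\Delta U_0$ (size $r^{-3/2}$) plus radius-correction terms, all $O(r^{\alpha - 3/2})$ — here one gains an $r^\alpha$ only in the sense that the leading singular part of $\Delta U_0$ is already of the claimed order $r^{\alpha - 3/2}$ when $\|\Gamma\|_{C^{1,\alpha}}$ is small, since it is entirely curvature-driven (in the flat case $U_0$ is harmonic). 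Finally, the scaling remark: all of the above bounds are linear in the relevant derivatives of $g$ beyond first order, and these are controlled by $\|\Gamma\|_{C^{1,\alpha}}$; so replacing $\|\Gamma\|_{C^{1,\alpha}} \le 1$ by $\le \delta$ replaces every $C$ by $C\delta$.

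\textbf{Main obstacle.} The hard part will be the second-derivative estimates $|\Delta \bar r - 1/r| \le C r^{\alpha-1}$ and $|\Delta \bar U_0| \le C r^{\alpha - 3/2}$: one must differentiate the mollification formula twice while the mollification radius itself depends on $X$ (producing extra terms involving $\nabla r \otimes \nabla r$, $D^2 r$ tested against $\varphi$, which have the dangerous size $r^{-1}$ or $r^{-3/2}$) and then show that, because $\Gamma$ is $C^{1,\alpha}$, all the genuinely singular contributions either cancel against the smooth flat model or carry an extra factor $r^\alpha$ coming from the Hölder modulus of $\nabla g$. Setting up the bookkeeping so that no $r^{-1}$ (resp. $r^{-3/2}$) term survives without such a factor is the crux; everything else is routine Taylor expansion under the integral sign.
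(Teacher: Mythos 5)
Your construction starts from the wrong object: you mollify $r$ (and $U_0$) directly at scale $\sim r(X)$, but this fails already at the zeroth‐order estimate. Two of the premises you use are false in the $C^{1,\alpha}$ setting. First, $r$ is \emph{not} smooth on $B_{r(X)/2}(X)$: since $\Gamma$ is only $C^{1,\alpha}$, the signed distance $d$ (hence $r=\sqrt{d^2+x_{n+1}^2}$) is only $C^{1,\alpha}$ everywhere in the tubular neighborhood, so interior bounds like $|D^2 r|\le C/r$, $|D^3r|\le C/r^2$ are not available. Second, $\nabla r$ does \emph{not} have oscillation $O(r^\alpha)$ on $B_{r/2}(X)$: from $\nabla r = r^{-1}(d\,\nu,\,x_{n+1})$ one sees that, e.g., $x_{n+1}/r$ can swing by $O(1)$ across a ball of radius $r/2$. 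Only $\nabla d=\nu$ has the $O(r^\alpha)$ oscillation; the factors $d/r$, $x_{n+1}/r$ have $O(1)$ oscillation. Consequently, the Taylor‐expansion argument gives only $|\bar r - r|\le C r$, not $Cr^{1+\alpha}$ — equivalently, mollifying the cone $\sqrt{x_n^2+x_{n+1}^2}$ at scale $\lambda$ already perturbs it by $O(\lambda)$, destroying the required gain. The same problem propagates to every subsequent estimate.

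The paper avoids this by mollifying the \emph{signed distance} $d$, which near a $C^{1,\alpha}$ surface is well approximated (to order $\lambda^{1+\alpha}$) by a \emph{linear} function, so mollification at scale $\lambda$ costs only $O(\lambda^{1+\alpha})$ and gives $|D^2 d_\lambda|\le C\lambda^{\alpha-1}$. It then defines $\bar r$ and $\bar U_0$ \emph{algebraically}, $\bar r:=\sqrt{\bar d^2+x_{n+1}^2}$ and $\bar U_0:=\tfrac{1}{\sqrt2}\sqrt{\bar d+\bar r}$, on dyadic annuli and glues. This is not a cosmetic choice: the algebraic form preserves the identity $\bar r\,\Delta\bar r + |\nabla\bar r|^2 = \bar d\,\Delta\bar d + |\nabla\bar d|^2 + 1$, from which $|\Delta\bar r - 1/r|\le Cr^{\alpha-1}$ falls out once you know $|\nabla\bar r|, |\nabla\bar d|=1+O(r^\alpha)$ and $|\bar d\,\Delta\bar d|\le Cr^{\alpha}$; a similar identity handles $\Delta\bar U_0$. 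Your plan correctly identifies the Laplacian estimates as the crux, but without the ``mollify $d$, then build $\bar r,\bar U_0$ algebraically'' step there is no route to the $r^\alpha$ gain, and the variable‐radius mollification machinery you propose would have to be rebuilt from scratch around the distance function anyway.
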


\begin{proof} The idea is to smooth out the signed distance function $d$ to $\Gamma$ and then smooth out $r$ and $U_0$ using the formulas
$$r=\sqrt{d^2+x_{n+1}^2}, \quad \quad U_0=\frac{1}{\sqrt 2} \sqrt{r+d}.$$ 
We divide the proof in three steps. Whenever we write $\nabla d$, $\nabla r$, $\nabla U_0$ we assume we are at a point where these locally Lipschitz functions are differentiable.

\smallskip 

{\it Step 1.}
We start by constructing $\bar d$ by smoothing the signed distance $d$ to $\Gamma$ in dyadic tubular neighborhoods, and then we glue them together. 
First, define the open tubular neighborhood of $\Gamma,$
$$D_\lambda = \{x \in \R^n : |d|<\lambda\}, \quad \quad \mbox{$\lambda$ small.}$$
We set
$$d_\lambda := d * \rho_\lambda, \quad \rho_\lambda = \lambda^{-n}\rho(\frac x \lambda),$$ with $\rho$ a symmetric kernel supported in $B_{1/10}.$ 

We claim that 
\begin{equation} \label{dlambda} |d_\lambda - d| \leq C \lambda^{1+\alpha}, \quad |\nabla d_\lambda - \nabla d| \leq C  \lambda^\alpha, \quad |D^2 d_\lambda| \leq C \lambda^{\alpha -1} \quad \text{in $D_{4\lambda}.$}
\end{equation}
We check our claim at a point $x_0$ on the $x_n$-axis. Since $\|\Gamma\|_{C^{1,\alpha}} \leq 1$, we have
$$|d - x_n| \leq C \lambda^{1+\alpha}, \quad \text{in $B_{4\lambda}$,}$$
and we remark that if $\|\Gamma\|_{C^{1,\alpha}} \leq \delta$ then we may replace $C$ by $C \delta$.

Thus,
$$d=x_n +   \lambda^{1+\alpha} v, \quad |v| \leq C,$$
and using that $x_n*\rho_\lambda=x_n$ we get
$$d_\lambda = d *\rho_\lambda = x_n +  \lambda^{1+\alpha} \, \, (  v * \rho_\lambda).$$
This gives,
$$\nabla d_\lambda = e_n +  \lambda^{1+\alpha}\, ( v * \nabla \rho_\lambda), \quad \quad D^2d_\lambda =  \lambda^{1+\alpha} \, (v * D^2 \rho_\lambda)$$
and the claim \eqref{dlambda} follows by using that
$$\int \lambda |\nabla \rho_\lambda| dx \leq C, \quad \int \lambda^2 |D^2 \rho_\lambda| dx \leq C, \quad |\nabla d(x_0) - e_n| \le C \lambda^\alpha.$$

The function $d_\lambda$ approximates $d$ up to an error $\lambda^{1+\alpha}$ in $D_\lambda$. Next we interpolate between various $d_\lambda$ with $\lambda=\lambda_k=4^{-k}$ in the annular sets $\mathcal A_\lambda := \{\lambda < d< 4\lambda\}.$

We define $\bar d$ to coincide with $d_\lambda$ in $\mathcal A_\lambda \cap D_{2\lambda}$ and with $d_{4\lambda}$ in $\mathcal A_\lambda \setminus D_{3\lambda}$. Precisely let
$$\bar d = \varphi  \, d_\lambda+ (1-\varphi) \, d_{4\lambda}$$
with $\varphi$ a cutoff function
\begin{equation}\label{phi}\varphi=0 \quad \text{if $d>3\lambda$}, \quad \varphi =1 \quad \text{if $d < 2\lambda$.}\end{equation}
We set 
$$\varphi=h \left (\frac{d_\lambda}{\lambda} \right),$$
with $$h(t)=1 \quad \text{if $t \leq 2+1/4$}, \quad h(t)=0 \quad \text{if $t \geq 2+3/4$}$$
and $h$ smooth in between. Thus, 
\begin{equation}\label{cutoff}|\nabla \varphi| \leq C\lambda^{-1}, \quad |D^2\varphi| \leq C \lambda^{-2}.\end{equation}
Then, $\bar d$ satisfies
\begin{equation}\label{35} |\bar d - d| \leq C \lambda^{1+\alpha}, \quad |\nabla \bar d - \nabla d| \leq C \lambda^\alpha, \quad |D^2 \bar d| \leq C \lambda^{\alpha -1} \quad \text{in $\mathcal A_\lambda.$}
\end{equation}
This follows immediately after computing 
\begin{align}\label{dbar1}
 \nabla \bar d & =  \varphi \, \nabla d_\lambda + (1-\varphi) \nabla d_{4\lambda}  + (d_\lambda - d_{4\lambda}) \nabla \varphi,\\
\nonumber D^2 \bar d & = \varphi D^2 d_\lambda +(1-\varphi) D^2 d_{4\lambda} + 2 (\nabla d_\lambda - \nabla d_{4\lambda}) \otimes \nabla \varphi + (d_\lambda - d_{4\lambda}) D^2 \varphi,
\end{align}
and then using \eqref{dlambda} and \eqref{cutoff}.


\

{\it Step 2.} We construct $\bar r$ in a similar fashion as $\bar d$. We first construct approximations $r_\lambda$ in dyadic annular regions $\mathcal R_\lambda$ in $\R^{n+1}$, and then we ``glue" them together. 

We define
\begin{equation}\label{defrlambda}
r_\lambda:= \sqrt{d_\lambda^2 + x_{n+1}^2}, \quad \text{in $\mathcal R_\lambda= \{\frac{\lambda}{2} < r < 4\lambda\}.$}
\end{equation}
with $d_\lambda$ as in Step 1. Then $r$, $r_\lambda$ and $\lambda$ are all comparable to each other in $\mathcal R_\lambda$ and we claim that
\begin{equation}\label{rlambda} |r_\lambda - r| \leq C  \lambda^{1+\alpha}, \quad \quad |\Delta r_\lambda - \frac{1}{r}| \leq C  \lambda^{\alpha -1}
\end{equation}
\begin{equation}\label{333}
|\nabla r_\lambda - \nabla r| \leq C  \lambda^\alpha, \quad \quad \|D^2 r_\lambda\| \le \frac{C}{\lambda}   .
 \end{equation}
Indeed, using the first inequality in \eqref{dlambda} we obtain
$$|r_\lambda^2 - r^2| = |d_\lambda^2- d^2| \leq C \lambda^{2+\alpha}.$$
This gives the first inequality in \eqref{rlambda} hence
\begin{equation}\label{compr} |\frac{r_\lambda}{r} -1| \leq C \lambda^\alpha.\end{equation}
Since
\begin{equation}\label{nabr}
\nabla r_\lambda= \frac{1}{r_\lambda} (d_\lambda \nabla_xd_\lambda, x_{n+1})
\end{equation}
the inequalities in \eqref{333} follow easily from the estimates for $d_\lambda$ (see \eqref{dlambda}, \eqref{compr}). 

From \eqref{333}, \eqref{dlambda} we obtain
$$|\nabla r_\lambda|-1=O( \lambda^\alpha), \quad \quad |\nabla d_\lambda|-1=O( \lambda^\alpha).$$
Then the identity
$$ r_\lambda \Delta r_\lambda + |\nabla r_\lambda|^2=\frac 1 2 \Delta r_\lambda^2= \frac 12 \Delta (d_\lambda^2+x_{n+1}^2)=d_\lambda \Delta d_\lambda + |\nabla d_\lambda|^2+1,$$
implies $$r_\lambda \triangle r_\lambda=1+O( \lambda^\alpha),$$
which gives the second inequality in \eqref{rlambda} and our claim is proved.

\smallskip

Next we glue various $r_\lambda$'s with $\lambda=\lambda_k=4^{-k}$. In the regions $\{\lambda_k < r < 4 \lambda_k  \}$ we define 
$$\bar r = \varphi \,  r_\lambda  + (1-\varphi)r_{4\lambda}, \quad \quad \mbox{with} \quad \varphi:=h( \frac {r_\lambda}{\lambda}) ,\quad \quad \mbox{$h$ as above.}$$
Notice that $\varphi$ satisfies \eqref{cutoff}, and \eqref{333} holds with $d$ replaced by $r$. 
This shows that $\bar r$ still satisfies in this region
\begin{equation} |\bar r - r| \leq C r^{1+\alpha}, \quad |\nabla \bar r - \nabla r| \leq C r^\alpha, \quad |\Delta  \bar r - \frac{1}{r}| \leq C r^{\alpha -1}.\end{equation}
Moreover $$\mbox{ \eqref{nabr} and} \quad |\p_{x_{n+1}} \varphi | \le C \frac{|x_{n+1}|}{ \lambda^2} \quad \Longrightarrow \quad |\p_{x_{n+1}} \bar r- \p_{x_{n+1}} r| \le C \frac{|x_{n+1}|}{r} \lambda^\alpha \le C \frac{U_0}{r^\frac 12} \lambda^\alpha.$$ 
\smallskip

{\it Step 3.} We construct $\bar U_0.$ As before,
$$(U_0)_\lambda = \frac{1}{\sqrt 2}(d_\lambda + r_\lambda)^{1/2}, \quad \text{in $\mathcal R_\lambda$.}$$
Below we show that $(U_0)_\lambda$ satisfies the following inequalities
\begin{equation}\label{U0lambda} 
\left | \frac{(U_0)_\lambda}{U_0} - 1 \right | \leq C \lambda^\alpha, \quad \quad |\nabla (U_0)_\lambda-\nabla U_0| \le C \lambda^{\alpha-\frac 1 2}, 
\end{equation}
\begin{equation}\label{U0lambda1} 
|\triangle (U_0)_\lambda| \le C \lambda^{\alpha -\frac 32}.
\end{equation}
Then the function $\bar U_0$ is obtained as in Step 2 by interpolating the various $(U_0)_\lambda$'s.

We check the inequalities above separately in the regions
$$\mathcal R_\lambda^1:=\mathcal R_\lambda \cap \{d > -r / 2  \}  \quad \mbox{and}  \mathcal  \quad \mathcal R_\lambda^2:=\mathcal R_\lambda \cap \{d < - r/  2  \},$$
depending whether or not we are closer to the set $\mathcal P$ where $U_0$ and $(U_0)_\lambda$ vanish.

{\it Case 1:} In the region $\mathcal R_\lambda^1$ we know that $U_0$, $(U_0)_\lambda$ and $\lambda^{1/2}$ are comparable to each other and
$$(U_0)_\lambda = U_0 \left (\frac{ r_\lambda + d_\lambda}{r+d}\right)^{\frac 12}.$$
From the Step 1 and 2 we have
$$\frac{ r_\lambda + d_\lambda}{r+d}=1+ O ( \lambda^{\alpha }), \quad \quad  \nabla \frac{ r_\lambda + d_\lambda}{r+d}=O ( \lambda^{\alpha -1}) $$
and we easily obtain the two inequalities in \eqref{U0lambda}. In particular we find
$$|\nabla (U_0)_\lambda|=|\nabla U_0| + O( \lambda ^{\alpha - \frac 12})=\frac {1}{2} r^{-1/2} + O( \lambda ^{\alpha - \frac 12}). $$
Now \eqref{U0lambda1} follows from the identity (see also \eqref{dlambda}, \eqref{rlambda}) 
$$(U_0)_\lambda \triangle (U_0)_\lambda +|\nabla (U_0)_\lambda|^2= \frac 14 \triangle (d_\lambda + r_\lambda).$$

{\it Case 2:} In the region $\mathcal R_\lambda^2$ we know that $U_0$, $(U_0)_\lambda$ and $|x_{n+1}|\lambda^{-1/2}$ are comparable to each other since
$$(U_0)_\lambda=|x_{n+1}|(r_\lambda - d_\lambda)^{-1/2} = U_0 \left (\frac{ r_\lambda - d_\lambda}{r-d}\right)^{-\frac 12},$$
and  \eqref{U0lambda} is obtained as above. In order to prove \eqref{U0lambda1} we write (assume $x_{n+1}>0$)
\begin{align}\label{long}
\nonumber \triangle (U_0)_\lambda &= 2 \p _{x_{n+1}}(r_\lambda-d_\lambda)^{-1/2} + x_{n+1} \triangle (r_\lambda-d_\lambda)^{-1/2}   \\
&=\frac{x_{n+1}}{2}(r_\lambda-d_\lambda)^{-\frac 32} \left(-2\frac{1}{r_\lambda} - \triangle (r_\lambda-d_\lambda) +\frac 3 2 \frac{ |\nabla (r_\lambda-d_\lambda)|^2}{r_\lambda-d_\lambda}          \right),
\end{align}
 and we used $\p_{x_{n+1}} r_\lambda=x_{n+1}/r_\lambda$ and $\p_{x_{n+1}} d_\lambda=0$. Since
\begin{align*}
|\nabla (r_\lambda-d_\lambda)|^2&= r_\lambda^{-2} \left((r_\lambda -d_\lambda)^2 |\nabla d_\lambda|^2 + x_{n+1}^2 \right)  \\
&=r_\lambda^{-2} \left( 2r_\lambda(r_\lambda-d_\lambda) +O( \lambda^{2+\alpha}) \right)   
\end{align*}
we find that the quantity in the parenthesis in \eqref{long} is $O( \lambda^{\alpha-1})$ and our claim is proved.

\end{proof}

In the next lemma we obtain an $L^\infty$ bound for solutions to the Laplace equation with right hand side that degenerates near $\Gamma$.

\begin{lem}[Barrier]\label{bar}
Assume $\|\Gamma\|_{C^{1,\alpha}} \leq \delta$ with $\alpha \in (0, \frac 12),$ and
$$|\Delta u| \leq  r^{\alpha -\frac 32} \quad \text{in $B_1 \setminus \mathcal P$}, \quad u=0 \quad \text{on $\mathcal P \cup \p B_1$}.$$
Then
$$|u| \le C U_0  \quad \quad \mbox{in $B_1$},$$
with $C$, $\delta$ depending on $n$ and $\alpha$.
\end{lem}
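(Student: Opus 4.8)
The claim is that a solution $u$ to $|\Delta u| \le r^{\alpha - 3/2}$ in $B_1 \setminus \mathcal P$ vanishing on $\mathcal P \cup \partial B_1$ satisfies $|u| \le C U_0$. By the maximum principle in the slit domain $B_1 \setminus \mathcal P$ (which is legitimate since $u$ vanishes on the relevant boundary pieces, including the slit), it suffices to exhibit a function $w \ge 0$ with $w = 0$ on $\mathcal P$, $w$ bounded below on $\partial B_1$, $w \le C U_0$, and $\Delta w \le -r^{\alpha-3/2}$ in $B_1 \setminus \mathcal P$; then $\pm u \le w$ follows. So the whole task reduces to finding the right barrier.

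**First I would recall the key computation for powers of $r$.** From \eqref{rm}, $\Delta r^m = m r^{m-2}(m - \kappa d)$, and since $\|\Gamma\|_{C^{1,\alpha}} \le \delta$ we have $|\kappa \, d| \le C\delta\, r^\alpha$ (curvature bounded in a Hölder sense, distance small), so $\Delta r^m = m^2 r^{m-2} + O(\delta\, r^{m-2+\alpha})$. More importantly, the analogue of \eqref{dr1} gives $\Delta U_0 = 0$ when $\Gamma$ is flat, and in general $U_0$ is a near-harmonic function vanishing on $\mathcal P$; in fact a direct computation (as in Lemma~\ref{reg_1}, which controls $\Delta \bar U_0$, and which one checks persists for $U_0$ itself away from $\Gamma$) shows $|\Delta U_0| \le C\delta\, r^{\alpha - 3/2}$. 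The natural candidate barrier is then
$$ w := C_1 U_0 + C_2\, r^{\beta}\, U_0 \quad \text{for a suitable exponent } \beta \in (0, \alpha), $$
or more simply $w := C_1 U_0 - C_2\, U_0 \, r^{\alpha+1/2}$ arranged so that the second term is negative-Laplacian-dominant. The point is that $\Delta(U_0 r^\beta) = r^\beta \Delta U_0 + 2\nabla r^\beta \cdot \nabla U_0 + U_0 \Delta r^\beta$; the dominant term is $2\beta r^{\beta-1}\,\nabla r \cdot \nabla U_0 = 2\beta r^{\beta - 1} \cdot \frac{U_0}{2r} = \beta\, r^{\beta - 2} U_0$, using $\nabla r \cdot \nabla U_0 = U_0/(2r)$ (the identity quoted at the end of Section~4). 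Since $U_0 \sim r^{1/2}$ near $\Gamma$ in the ``non-degenerate'' directions, this behaves like $\beta\, r^{\beta - 3/2}$, which for $\beta < \alpha$ gives a term $\le -c\, r^{\beta - 3/2}$ that beats $r^{\alpha - 3/2}$ near $\Gamma$. One must take care of the degenerate region where $U_0 \ll r^{1/2}$ (near the plane $\{x_{n+1}=0\}$ on the $\mathcal P$ side, where $U_0 \sim |x_{n+1}|/\sqrt{r}$); there $r^{\alpha-3/2}$ is comparable to $U_0\, r^{\alpha-1}/|x_{n+1}| \cdot$ (bounded), so one checks that the term $2\partial_{x_{n+1}}(r^\beta) \partial_{x_{n+1}} U_0 + r^\beta \Delta U_0$ still produces the needed negative quantity; alternatively one covers this region with the simpler barrier $|x_{n+1}|^\gamma$ times a constant.

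**The cleanest route is to dyadically rescale.** Work in the annulus $\{2^{-k-1} \le r \le 2^{-k}\}$; rescaling to unit size, the equation becomes $|\Delta \tilde u| \le 2^{-k(\alpha - 1/2)} \to 0$, so on each dyadic scale we have a small perturbation of a harmonic function vanishing on a nearly-flat slit, and the classical boundary Harnack (the $k=0$ case, cited as \cite{HW,CFMS,Fe}) together with the estimate \eqref{claim}-type barrier bound on the particular solution gives $\sup_{\mathcal R_{2^{-k}}} |u| \le C\, \sup_{\partial B_{2^{-k}}} U_0 \le C\, 2^{-k/2}$ — but one needs this uniformly, which is exactly what the explicit global supersolution $w$ above provides in one shot. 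So I would actually present it as: build $w = C_1 U_0 + C_2 \sum_k$ (dyadic bump barriers) or just the single clean formula $w = C U_0(1 + r^{\beta})$ with $\beta = \alpha/2$, verify $\Delta w \le -r^{\alpha - 3/2}$ via the computation above, observe $w \ge c$ on $\partial B_1$ (so after scaling $u$ by a constant $w \ge u$ there) and $w = 0$ on $\mathcal P$, and conclude by the comparison principle. **The main obstacle** is handling the two distinct regimes of $U_0$ near $\Gamma$ — the $U_0 \sim \sqrt{d+r}$ regime where $d > 0$ versus the $U_0 \sim |x_{n+1}|/\sqrt{r}$ regime where $d < 0$ — and making sure the curvature error terms $O(\delta r^{\alpha})$ appearing in $\Delta U_0$ and $\Delta r^\beta$ (controlled precisely by Lemma~\ref{reg_1}) are genuinely lower order than the good negative term, which forces $\delta$ small and $\beta$ strictly below $\alpha$; the restriction $\alpha \in (0, 1/2)$ in the statement is presumably exactly what keeps $r^{\alpha - 3/2}$ from being too singular to dominate with a bounded-coefficient barrier.
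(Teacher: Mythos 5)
Your overall strategy --- construct an explicit nonnegative supersolution $w$ vanishing on $\mathcal P$, bounded above by $CU_0$, with $\Delta w \le -r^{\alpha-3/2}$, and apply the comparison principle --- is exactly the paper's strategy, and your preliminary observations (the identity $\nabla r \cdot \nabla U_0 = U_0/(2r)$, the need to use $\bar U_0$ rather than $U_0$ because the latter has no pointwise Laplacian when $\Gamma$ is merely $C^{1,\alpha}$) are all correct. The gap is in the specific barrier you propose.

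Your candidate $w = U_0(1 - C r^\beta)$ fails to be a supersolution in the degenerate region where $U_0 \ll r^{1/2}$ (i.e.\ near the slit $\mathcal P$ but away from $\Gamma$, where $U_0 \sim |x_{n+1}|/\sqrt r$). Indeed, from the computation you set up,
$$\Delta(U_0 r^\beta) = r^\beta \Delta U_0 + 2\nabla r^\beta \cdot \nabla U_0 + U_0 \Delta r^\beta = \beta(\beta+1)\,r^{\beta-2}U_0 + \text{errors},$$
so the only negative contribution to $\Delta w$ is of size $r^{\beta-2}U_0$. This beats the forcing $r^{\alpha-3/2}$ precisely when $U_0 \gtrsim r^{\alpha - \beta + 1/2}$, which holds in the regime $U_0 \sim r^{1/2}$ but is false as $U_0 \to 0$ with $r$ fixed. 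The forcing $r^{\alpha-3/2}$ does \emph{not} degenerate there, so the comparison inequality genuinely fails. Your proposed fix --- covering the region with a barrier proportional to $|x_{n+1}|^\gamma$ --- does not help either, because $|x_{n+1}|^\gamma \sim (U_0\sqrt r)^\gamma$ is not bounded by $CU_0$ near $\mathcal P$, so even if it were a supersolution it would not yield the desired estimate $|u|\le CU_0$.

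The paper's barrier is $V = \bar U_0 - \bar U_0^{\,1+2\alpha}$, i.e.\ a power of $\bar U_0$ itself rather than of $r$. Computing
$$\Delta V = \Delta \bar U_0 - (1+2\alpha)\,\bar U_0^{\,2\alpha-1}\bigl(\bar U_0 \Delta \bar U_0 + 2\alpha|\nabla \bar U_0|^2\bigr),$$
one uses the three estimates $\bar U_0 \le r^{1/2}$, $|\Delta \bar U_0| \le C\delta r^{\alpha-3/2}$, and crucially $|\nabla \bar U_0| \ge c\,r^{-1/2}$ (which holds in \emph{both} regimes, because near $\mathcal P$ the transverse derivative $\partial_{x_{n+1}}\bar U_0 \sim r^{-1/2}$ supplies the lower bound). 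Since $\alpha \in (0,\tfrac12)$, the exponent $2\alpha-1$ is negative, so $\bar U_0^{\,2\alpha-1} \ge r^{\alpha-1/2}$ everywhere, and the term $-\bar U_0^{\,2\alpha-1}|\nabla\bar U_0|^2 \le -c\,r^{\alpha-3/2}$ actually \emph{strengthens} in the degenerate region. This is precisely the mechanism that your $r^\beta$-based barrier lacks, and it is also where the hypothesis $\alpha<\tfrac12$ enters. So the point you flagged as ``the main obstacle'' is real, and the fix is not a smaller power of $r$ but a power of $U_0$.
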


\begin{proof}
We construct an upper barrier for $u$. Let
$$V:= \bar U_0 - \bar U_0 ^{1+2 \alpha},$$
and notice that $V \ge 0$ in $B_1$.
We compute
$$\triangle V = \triangle \bar U_0 - (1+2 \alpha) \bar {U_0}^{2 \alpha-1} (\bar U_0 \triangle \bar U_0 + 2\alpha |\nabla \bar U_0|^2).$$
By Lemma \ref{reg_0} we have
$$ |\bar U_0|\le r^\frac 12, \quad \quad | \triangle \bar U_0| \le C  \delta r ^{\alpha -\frac 32}, \quad \quad |\nabla \bar U_0| \ge c r^{-\frac 12},$$
thus we obtain
$$\triangle V \le -c r^{\alpha -\frac 32}.$$
Since $V \ge 0$ and $\triangle (C V) \le \triangle u$, we apply maximum principle in $B_1 \setminus \mathcal P$ and obtain
$$ u \le C V \le C \bar U_0 \quad \mbox{in $B_1$.}$$
\end{proof}

\begin{rem}
$L^\infty$ bounds for $u$ hold also for more degenerate right hand side. If for some small $\gamma >0$, 
$$|\triangle u| \le r^{\gamma-2}, \quad u=0 \quad \text{on $\mathcal P \cup \p B_1$},$$
then $$|u| \le C r^\gamma.$$
Indeed, in this case we can use $V=\bar U_0^{2 \gamma}$ as an upper barrier. 
\end{rem}

\

{\it Proof of Lemma $\ref{s_0}$}.
We have $$|u-aU_0| \le  C r^{\alpha} U_0 \le C r^{\frac 12 + \alpha}.$$

The idea is to replace $U_0$ with an appropriate function $U_0^*$ which measures the distances $d$ and $r$ to a straight boundary instead of $\Gamma$. Notice that $u$ and $U_0^*$ are both harmonic.
  
We pick a point $X_0$ at distance $\lambda$ from $\Gamma$. Assume for simplicity of notation that the closest point to $X_0$ on $\Gamma$ is the origin $0$, thus $X_0$ belongs to the 2D plane $\{x'=0\}$.
Let $U_0^*$ denote the 1-dimensional solution with respect to the straight boundary $L:=\{x_n=0\}$, i.e.
$$U_0^*:=U_{0,L}=\frac{1}{\sqrt 2}\sqrt{x_n+r^*}, \quad \quad r^*:=\sqrt{x_n^2+x_{n+1}^2}.$$
 Notice that $U_0^*$, $r^*$ coincide with $U_0$, $r$ at the point $X_0$. Moreover, if $d$, $r$, $U_0$ are differentiable at $X_0$ then
 $$\nabla d=\nabla x_n=e_n, \quad  \nabla U_0=\nabla U_0^*, \quad \nabla r=\nabla r^* \quad \quad \mbox{at $X_0$} .$$

In the conical region $$\mathcal C:= \{  |x'| < r^* \} \cap \{ \frac{\lambda}{2} < r^* < 2 \lambda\}$$
we use that $\|\Gamma\|_{C^{1,\alpha}} \le 1$ and obtain (as in \eqref{U0lambda})
$$|U_0^*-U_0| \le C \lambda^{\frac 12 + \alpha}  $$
thus
$$|u-aU_0^*| \le C \lambda^{\frac 12 + \alpha}  \quad \mbox{in $\mathcal C$.}$$
Since $u-aU_0^*$ is harmonic and vanishes on $\mathcal P$ we apply gradient estimates and obtain
$$|\nabla u- a\nabla U_0^*| \le C \lambda^{\alpha - \frac 12}, \quad \quad |\nabla_x u-   a\nabla_x U_0^*| \le C \lambda^{\alpha - 1} U_0^* \quad \quad \mbox{at $X_0$,}  $$
and we replace $U_0^*$ by $U_0$ in the inequalities above.

In conclusion, at an arbitrary point $X \in B_{1/2}$ where $U_0$ is differentiable we have
$$|\nabla u- a_{\pi(X)}\nabla U_0| \le C r^{\alpha - \frac 12}, \quad \quad |\nabla_x u-   a_{\pi(X)}\nabla_x U_0| \le C r^{\alpha - 1} U_0,$$
where $\pi(X)$ is the projection of $X$ onto $\Gamma$ and $a_{\pi (X)}$ represents the corresponding constant for the expansion of $u$ at $\pi(X)$. The lemma is proved since
$$|a_{\pi(X)}-a| \le C |\pi(X)|^\alpha \le C |X|^\alpha, \quad \quad r \le |X|, $$
$$|\nabla U_0| \le C r^{-\frac 12}, \quad \quad \nabla_x U_0=\frac {U_0}{2r} \nabla_x d, \quad \quad |\nabla_x U_0| \le C \frac{U_0}{r}.$$

\qed

\

Finally we conclude with the proof of Lemma \ref{pr}.

\

{\it Proof of Lemma $\ref{pr}.$}
By Lemma 5.5 in \cite{DS3} we see that in the cone $\mathcal C_0=\{|(x_n,x_{n+1})|>|x'| \}$ we have
\begin{align*}
U_i&=\p_{x_i}(U_0 P_0) + O(\frac{U_0}{r}|X|^{k+\alpha})  \quad \quad i=1,2,..,n,\\
U_{n+1}&=\p_{x_{n+1}}(U_0 P_0) + O(|X|^{k-\frac 12+\alpha}).
\end{align*}
Since $|\p_{x_{n+1}}r| \le r^{-\frac 12} U_0$ we find
$$\p_r U=\p_r (U_0 P_0) + O(\frac{U_0}{r}|X|^{k+\alpha}) \quad \mbox{in $\mathcal C_0$,}$$
and the conclusion of the lemma follows as in \cite{DS3}, by writing the equality above for all corresponding cones $\mathcal C_Z$, $Z \in \Gamma$.

\qed

\end{document}